\def\BZ{\mathbb{Z}}
\def\BQ{\mathbb{Q}}
\def\GL{\mathrm{GL}} \def\res{\mathrm{res}}
\def\fin{\mathrm{fin}} \def\sing{\mathrm{sing}}
\def\ord{\mathrm{ord}} \def\Sel{\mathrm{Sel}}
\def\ord{\mathrm{ord}} \def\Gal{\mathrm{Gal}}
\def\BC{\mathbb{C}}
\newcommand{\Hom}{\mathrm{Hom}}
    \theoremstyle{plain}
    \newtheorem{thm}{Theorem}[section] \newtheorem{cor}[thm]{Corollary}
    \newtheorem{lem}[thm]{Lemma} 
    \newtheorem{prop}[thm]{Proposition}
    \newtheorem {conj}[thm]{Conjecture}
    \theoremstyle{definition}
    \newtheorem{defn}[thm]{Definition}
    \theoremstyle{remark}
    \newtheorem {rem}[thm]{Remark}
    \numberwithin{equation}{section}
\newcommand{\wvec}[4]{{\scriptsize{\big ( \!\!
\begin{array}{cc} #1 \!\!\! & \!\!\! #2 \\ #3 \!\!\! & \!\!\! #4 \end{array} \!\! \big ) }}}
\begin{document}

\title{Comparison of canonical periods under base change}
\author{Qingshen Lv,\\ \small School of Mathematics and Statistics, University College Dublin
 \\ \small qingshen.lv@ucdconnect.ie
 \\ Bingyong Xie \footnote{The second author is supported by
the National Natural Science Foundation of China (grant 12231001),
and by Science and Technology Commission of Shanghai Municipality
(no. 22DZ2229014).}
 \\ \small School of Mathematical Sciences, \& Key Laboratory of MEA (Ministry of Education) \\ \small \&   Shanghai Key Laboratory of PMMP,  East China Normal University
 \\ \small byxie@math.ecnu.edu.cn}
\date{} \maketitle

\section{Introduction}

\subsection{Special value formula, Gross period and Hida canonical period}

In \cite{Gro87} Gross provided a formula expressing the special
values of $L$-functions via Heegner points. To describe his result,
let $f$ be a newform on $\Gamma_0(N)$  of even weight $k$, and let $K$ be an imaginary
quadratic field of discriminant $D$ such that $D$ is prime to $N$.

Assume $k=2$ and $N$ is a prime inert in $K$. Let $B$ be the definite quaternion algebra that is ramified exactly at $N$. Fix a prime number $p$. Let $H_n$ be the ring class field of $K$ of conductor $p^n$. When $\chi$ is a character of the Galois group $\mathrm{Gal}(H_n/K)$, one forms the Rankin-Selberg $L$-function $L(f,\chi,s)$.
Gross \cite{Gro87} showed that there exists a period $\Omega_{f,K}$
depending on $f$ and $K$ but independent of $n$ and $\chi$, called Gross
period, such that $\frac{L(f,\chi,1)}{\Omega_{f,K}}$ are algebraic
and they satisfy
$$\frac{L(f,\chi,1)}{\Omega_{f,K}}\cdot\sqrt{D}p^{n}=|\sum\limits_{\sigma\in\mathrm{Gal}(H_n/K)}\chi(\sigma)\psi(P^{\sigma})|^{2},$$
where $P$ is the Heegner point on the Shimura set attached to $B$.

Gross formula is generalized by Shouwu Zhang \cite[Theorem 7.1]{Zha04} for $k=2$ and general $N$, and by Haiping Yuan \cite{Yua05} for $k>2$.

In \cite{BD07}  Bertolini and Darmon used Gross' special value
formula to construct anticyclotomic $p$-adic $L$-functions, i.e.
$p$-adic $L$-functions for anticyclotomic $\BZ_p$-extension instead
of the cyclotomic $\BZ_p$-extension. In the same paper they proved
one divisibility for Iwasawa main conjecture for elliptic curves in
the setting of anticyclotomic $\BZ_p$-extension (\cite[Theorem
1]{BD07}), and also gave new evidence for Birch and Swinnerton-Dyer
conjecture (\cite[Corollary 4]{BD07}).

Gross period serves as a bridge between the complex $L$-function and
the $p$-adic $L$-function. But it depends on the choice of an imaginary
quadratic extension $K$.

Hida \cite{Hid81, Hid81.2} introduced a period that is independent
of $K$ and called the {\it canonical period}. Attached to $f$ there
exists a homomorphism $\lambda_f:T\longrightarrow \mathcal{O}_f$
from the Hecke algebra $T$ of level $N$ to a discrete valuation ring
$\mathcal{O}_f$ that is finite over $\BZ_p$. Let $\eta $ be the
congruence number for $\lambda_f$ defined by
$$\eta =\lambda_f(\mathrm{Ann}(\mathrm{Ker}(\lambda_f))).$$ Then the
canonical period is defined by
$$\Omega_{f}^{\mathrm{can}}=\frac{\langle f, f
\rangle_{\mathrm{Pet}}}{\eta }.$$ The reader may consult
\cite[Section 2]{Vat03} about more knowledge on Gross period and the
canonical period.

Vatsal \cite{Vat03} showed that
$\frac{\Omega_{f,K}}{\Omega_{f}^{\mathrm{can}}}$ lies in
$\mathcal{O}_f$. Pollack and Weston \cite{PW11} pointed out that
$\frac{\Omega_{f,K}}{\Omega_{f}^{\mathrm{can}}}\in\mathcal{O}_f^{\times}$
is equivalent to the freeness of spaces of modular forms on $B$ over
the associated Hecke algebra and the vanishing of certain local
Tamagawa components. In \cite{CH18} Chida and Hsieh actually proved
that $\frac{\Omega_{f,K}}{\Omega_{f}^{\mathrm{can}}}$ is a $p$-adic
unit, i.e.
$\frac{\Omega_{f,K}}{\Omega_{f}^{\mathrm{can}}}\in\mathcal{O}_f^{\times}$
under a condition ($\mathrm{CR^{+}}$). Wang \cite{Wang} 
generalized Chida and Hsieh's result to the setting of Hilbert
modular forms under similar hypothesis. 

There are two normalizations of anticyclotomic $p$-adic
$L$-functions depending on which period one uses. However, in the
present paper we assume those hypothesis in \cite{Wang} in order
that there is essentially no difference between these two
normalizations.

\subsection{Comparison of periods under base change}

The goal of the present paper is to compare the canonical period of
a Hilbert modular form and that of its base change to a (totally)
real quadratic extension.

To make it precise, let $F$ be a totally real number field, and  $f$
be a Hilbert newform of parallel even weight over $F$. Let $F'$ be a
totally real quadratic extension of $F$, and $f'$ be the base change
of $f$ to $F'$. One may expect the following holds.

\begin{conj} We have
$$ \Omega_{f'}^\mathrm{can}=(\Omega_f^\mathrm{can})^2 $$ up to a
$p$-adic unit.
\end{conj}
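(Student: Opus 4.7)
The plan is to reduce the conjecture to a base-change factorisation of the adjoint $L$-value, combined with Hida's interpretation of the congruence number. Let $\chi = \chi_{F'/F}$ denote the quadratic Hecke character of $F$ cutting out $F'$. Since $\rho_{f'} \cong \rho_f|_{G_{F'}}$, Mackey induction yields
\[
L(s, \mathrm{Ad}(f')) \;=\; L(s, \mathrm{Ad}(f))\, L(s, \mathrm{Ad}(f)\otimes\chi),
\]
and an analogous decomposition holds for the Petersson norm via the Rankin--Selberg integral. This is the structural input driving the expected square.

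First, I would invoke the Hilbert-modular version of Hida's theorem, as available in \cite{CH18, Wang} under the $(\mathrm{CR}^+)$-type hypotheses we are assuming, identifying the congruence number $\eta_f$ with the algebraic part of $L(1, \mathrm{Ad}(f))$ up to a $p$-adic unit. Combined with the Rankin--Selberg formula, which expresses $\langle f, f\rangle_{\mathrm{Pet}}$ as a residue of $L(s, f\otimes\bar f)$ times explicit local and archimedean factors, this realises $\Omega_f^{\mathrm{can}}$ as (a $p$-adic unit times) the ratio of the complex and algebraic adjoint $L$-values at $s=1$. Applying the same recipe to $f'$ over $F'$ and substituting the factorisation above should give
\[
\Omega_{f'}^{\mathrm{can}} \;\sim_p\; (\Omega_f^{\mathrm{can}})^2 \cdot \frac{L(1, \mathrm{Ad}(f)\otimes\chi)}{L^{\mathrm{alg}}(1,\mathrm{Ad}(f)\otimes\chi)} \cdot (\text{local/discriminant factors}),
\]
so the conjecture reduces to showing that the correction factor on the right is a $p$-adic unit.

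The main obstacle, I expect, is the twisted piece $L^{\mathrm{alg}}(1, \mathrm{Ad}(f)\otimes\chi)$: one must show it is a $p$-adic unit under the standing hypotheses. I would tackle this via a Pollack--Weston-style freeness argument for the Hecke module attached to the twist $f\otimes\chi$ on a suitable definite quaternion algebra, checking that the $(\mathrm{CR}^+)$-conditions propagate from $f$ to $f\otimes\chi$ provided $\chi$ is unramified at $p$ and the discriminant of $F'/F$ is coprime to the relevant level. The remaining archimedean and ramification bookkeeping should then be routine, as $\chi$ is a totally real quadratic character unramified at $p$, so the Gauss sums and local $\epsilon$-factors entering the comparison contribute only $p$-adic units. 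As a sanity check, specialising the anticipated formula to $F = \mathbb{Q}$ and comparing with the Chida--Hsieh framework should recover their $p$-integrality statement for $\Omega_{f,K}/\Omega_f^{\mathrm{can}}$.
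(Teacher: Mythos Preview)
First, note that the statement in question is presented in the paper as a \emph{conjecture}; the paper does not prove it in general but only under the hypotheses of Theorem~\ref{thm:main}. Your sketch also tacitly imposes $(\mathrm{CR}^+)$-type conditions, so the fair comparison is with the paper's proof of that theorem---and there the approaches diverge completely.

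Your adjoint-$L$-value strategy has a genuine gap at the key reduction. You propose to show that $L^{\mathrm{alg}}(1,\mathrm{Ad}(f)\otimes\chi)$ is a $p$-adic unit via a Pollack--Weston freeness argument for $f\otimes\chi$. But twisting by a character does not change the adjoint: $\mathrm{Ad}(f\otimes\chi)\cong\mathrm{Ad}(f)$. Hence the congruence module of $f\otimes\chi$ again measures $L^{\mathrm{alg}}(1,\mathrm{Ad}(f))$, not the $\chi$-twisted value you need. The quantity $L^{\mathrm{alg}}(1,\mathrm{Ad}(f)\otimes\chi)$ is not the congruence number of any obvious cusp form, and establishing that it is a $p$-adic unit is essentially the full content of the conjecture rather than a ``routine'' check; your proposed reduction is circular at exactly this point.

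The paper avoids adjoint $L$-values entirely. It chooses an auxiliary imaginary quadratic $K/F$, sets $K'=KF'$, and compares the anticyclotomic $p$-adic $L$-functions $L_p(K_\infty,f)$, $L_p(K_\infty,f\otimes\chi_{F'/F})$ and $L_p(K'K_\infty,f')$. Via the anticyclotomic Iwasawa main conjecture (Theorem~\ref{thm:Iw-main}, obtained by combining \cite{Wan} with \cite{Xie}) each is identified with the characteristic ideal of a Selmer group; the Selmer-group decomposition together with the $\mu=0$ result of \cite{Hung} then yields the integral equality of ideals $(L_p(K'K_\infty,f'))=(L_p(K_\infty,f))(L_p(K_\infty,f\otimes\chi_{F'/F}))$. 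Evaluating the interpolation formula at a suitably chosen Hecke character isolates the canonical periods, and the identification $\Omega_f^{\mathrm{can}}=\Omega_{f\otimes\chi}^{\mathrm{can}}$ from \cite{D2} finishes the argument. The point is that the Iwasawa-theoretic equality already encodes the needed $p$-integrality, so one never confronts $L(1,\mathrm{Ad}(f)\otimes\chi)$ directly.
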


The main result of this paper is the following theorem. Let $\rho_f$ be $p$-adic Galois representation attached to $f$ (see \cite{Wil88,Tay}), and let $\bar{\rho}_f$ denote the residue representation of $\rho_f$.

\begin{thm}\label{thm:main} Let $F$ be a totally real field,
and $F'$ be a real quadratic extension of $F$. Assume that $p$ is
unramified in both $F$ and $F'$. Let $f$ be a Hilbert newform of
parallel even weight $k$ over $F$ with trivial central character and
level $\mathfrak{n}_f$. Let $f'$ be the base change of $f$ to $F'$.
Let $\mathfrak{n}_{f'}$ be the level of $f'$.

Let $p$ be a prime number satisfying $p\geq\mathrm{max}(k+2,7)$. We
assume that $\mathfrak{n}_f$ is prime to $p$ and $f$ is ordinary at
each prime above $p$. When $[F:\mathbb{Q}]$ is odd, we assume that
there exists at least one prime $\mathfrak{q}$ such that
$\mathfrak{q}|| \mathfrak{n}_f$, and $\mathfrak{q}$ is split in
$F'$.

Suppose the following conditions hold.
\begin{enumerate}
\item The restrictions of $\bar{\rho}_f$ to $G_{F'(\xi_p)}$ and $G_{F'(
\sqrt{ p^*})}$ are absolutely irreducible, where
$p^*=(-1)^{\frac{p-1}{2}}p$.
\item In the case of $k=2$, for each place $v$ of $F$ above $p$ we
have  $a_v^2(f)\ {\backslash\hskip -10pt \equiv } 1 \ (\mathrm{mod}
\ p)$; similarly, for each place $v'$ of $F'$ above $p$ we have
$a_{v'}^2(f')\ {\backslash\hskip -10pt \equiv } 1 \ (\mathrm{mod} \
p)$.
\item For each $\mathfrak{l}|\mathfrak{n}$, if
$\bar{\rho}_f|_{I_{F_\mathfrak{l}}}$ is absolutely irreducible, then
$\mathrm{N}(\mathfrak{l})\equiv  {\hskip -10pt /}-1 \ (\mathrm{mod
}\ p)$. Similarly, for each $\mathfrak{l}'|\mathfrak{n}'$, if
$\bar{\rho}_{f'}|_{I_{F'_{\mathfrak{l}'}}}$ is absolutely
irreducible, then $\mathrm{N}(\mathfrak{l}')\equiv  {\hskip -10pt
/}-1 \ (\mathrm{mod }\ p)$.
\item $\rho_f$ is a minimal modular lifting of $\bar{\rho}_f$.
\end{enumerate} Then $$
\Omega_{f'}^\mathrm{can}=(\Omega_f^\mathrm{can})^2 $$ up to a
$p$-adic unit.
\end{thm}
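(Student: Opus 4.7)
The plan is to decompose $\Omega_{f'}^{\mathrm{can}}/(\Omega_f^{\mathrm{can}})^2$ as $\bigl(\langle f',f'\rangle_{\mathrm{Pet}}/\langle f,f\rangle_{\mathrm{Pet}}^2\bigr)\cdot\bigl(\eta^2/\eta'\bigr)$, where $\eta,\eta'$ are the congruence numbers attached to $\lambda_f,\lambda_{f'}$, and to show that each factor is governed by the adjoint representation $\mathrm{ad}^0\rho_f$ in such a way that the two contributions cancel. For the Petersson ratio, I would invoke the Shimura--Hida formula expressing $\langle f,f\rangle_{\mathrm{Pet}}$, up to explicit archimedean and elementary factors, in terms of $L(1,\mathrm{ad}\rho_f)$, combined with the base-change factorization
\begin{equation*}
L(s,\mathrm{ad}\rho_{f'}) \;=\; L(s,\mathrm{ad}\rho_f)\cdot L(s,\mathrm{ad}\rho_f\otimes\chi),
\end{equation*}
where $\chi=\chi_{F'/F}$ is the quadratic Hecke character of $F'/F$. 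This realises $\langle f',f'\rangle_{\mathrm{Pet}}/\langle f,f\rangle_{\mathrm{Pet}}^2$ as $L(1,\mathrm{ad}\rho_f\otimes\chi)$ divided by a suitable period, up to local factors whose triviality has to be verified using conditions~(2) and~(3) together with the prescribed split/inert behaviour of primes in $F'/F$.

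For the congruence numbers I would apply the Fujiwara--Kisin $R=T$ theorem for Hilbert modular forms: conditions~(1) and~(4) (the residual irreducibility after restriction to $G_{F'(\xi_p)}$ and $G_{F'(\sqrt{p^*})}$, and the minimal modularity of $\rho_f$) combined with the Taylor--Wiles--Kisin patching argument identify the minimal universal deformation ring of $\bar\rho_f$ with the relevant localisation of the Hecke algebra, and similarly for $\bar\rho_{f'}=\bar\rho_f|_{G_{F'}}$. The Wiles--Lenstra numerical criterion then matches $\eta$, up to a $p$-adic unit, with the length of the discrete Selmer group $H^1_f\bigl(F,\mathrm{ad}^0\rho_f\otimes\Frac(\mathcal{O}_f)/\mathcal{O}_f\bigr)$, and analogously for $\eta'$. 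Shapiro's lemma applied to $F'/F$ together with the Frobenius reciprocity decomposition
\begin{equation*}
\mathrm{Ind}_{G_{F'}}^{G_F}\!\bigl(\mathrm{ad}^0\rho_f|_{G_{F'}}\bigr) \;\cong\; \mathrm{ad}^0\rho_f \;\oplus\; \bigl(\mathrm{ad}^0\rho_f\otimes\chi\bigr)
\end{equation*}
yields, after matching the local Selmer conditions place by place (where conditions~(1)--(3) rule out any discrepancy), a decomposition
\begin{equation*}
H^1_f\bigl(F',\mathrm{ad}^0\rho_{f'}\otimes\Frac(\mathcal{O}_f)/\mathcal{O}_f\bigr) \;\cong\; H^1_f\bigl(F,\mathrm{ad}^0\rho_f\otimes\Frac(\mathcal{O}_f)/\mathcal{O}_f\bigr) \;\oplus\; H^1_f\bigl(F,\mathrm{ad}^0\rho_f\otimes\chi\otimes\Frac(\mathcal{O}_f)/\mathcal{O}_f\bigr).
\end{equation*}
Consequently $\eta'/\eta^2$ equals, up to a $p$-adic unit, the order of the twisted adjoint Selmer group over $F$.

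Putting the two halves together, I would invoke the Bloch--Kato type equality between the twisted adjoint Selmer order and $L(1,\mathrm{ad}\rho_f\otimes\chi)$ (divided by the same period that appears in Step~1); under the present hypotheses this is accessible from the cited work of Wang \cite{Wang} and the main-conjecture results for twists of $\mathrm{ad}^0\rho_f$. The twisted $L$-value then cancels against the twisted Selmer order, leaving a $p$-adic unit. The principal obstacle is the bookkeeping of local factors: at every place of $F$ dividing $p\,\mathfrak{n}_f$ or ramifying in $F'$, one must verify that the Euler/Tamagawa factors on the automorphic side exactly match the local Selmer corrections on the Galois side, so no spurious $p$-adic valuation is introduced. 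Conditions~(1)--(4) are calibrated for exactly this: condition~(4) eliminates level-raising contributions, condition~(3) kills local invariants at primes where $\bar\rho_f$ is ramified, and condition~(2) handles the primes above $p$. A possibly cleaner alternative would first use Wang's theorem that $\Omega_{f,K}/\Omega_f^{\mathrm{can}}$ and $\Omega_{f',K'}/\Omega_{f'}^{\mathrm{can}}$ are $p$-adic units for compatibly chosen CM extensions $K/F$ and $K'=KF'/F'$ satisfying the hypotheses required for $f$ and $f'$ respectively, thereby reducing the theorem to a direct comparison $\Omega_{f',K'}\sim\Omega_{f,K}^2$ of Gross periods, which can then be computed on the definite quaternionic side via Jacquet--Langlands and $\mathrm{GL}_2$ base change.
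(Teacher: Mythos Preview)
Your approach is genuinely different from the paper's, and it contains a real gap at the crucial step. The paper never touches the adjoint representation. Instead it chooses an auxiliary CM extension $K/F$ (with $K'=KF'$), forms the anticyclotomic $p$-adic $L$-functions $L_p(K_\infty,f)$, $L_p(K_\infty,f\otimes\chi_{F'/F})$ and $L_p(K'K_\infty,f')$ normalised by the canonical periods, and then proves the equality of ideals
\[
(L_p(K'K_\infty,f'))=(L_p(K_\infty,f))\cdot(L_p(K_\infty,f\otimes\chi_{F'/F}))
\]
in $\mathcal{O}_\mathfrak{P}[[\Gamma^-_\mathfrak{p}]]$. This equality is obtained not by any direct automorphic manipulation but by establishing the full anticyclotomic Iwasawa main conjecture for $f$, $f\otimes\chi_{F'/F}$ and $f'$ (combining Wan's divisibility with Xie's converse), so that each $p$-adic $L$-function equals the characteristic ideal of the corresponding Selmer group for the \emph{two-dimensional} representation $\rho_f$; the Selmer decomposition $\mathrm{Sel}(K'K_\infty,A_\rho)=\mathrm{Sel}(K_\infty,A_\rho)\oplus\mathrm{Sel}(K_\infty,A_{\rho\otimes\chi_{F'/F}})$ together with $\mu=0$ then forces the ideal equality. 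Finally the paper evaluates at one well-chosen Hecke character via the interpolation formula and uses $\Omega^{\mathrm{can}}_{f\otimes\chi_{F'/F}}=\Omega^{\mathrm{can}}_f$ to read off the period relation.

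The gap in your argument is the ``Bloch--Kato type equality'' for the twisted adjoint: you need $L(1,\mathrm{ad}^0\rho_f\otimes\chi)/(\text{period})$ to match the order of $H^1_f(F,\mathrm{ad}^0\rho_f\otimes\chi\otimes E/\mathcal{O})$ up to a unit. You attribute this to Wang \cite{Wang}, but that paper proves the anticyclotomic main conjecture for the standard two-dimensional $\rho_f$ over CM fields, not anything about $\mathrm{ad}^0\rho_f$ over totally real fields; it simply does not supply the input you need. The untwisted adjoint Bloch--Kato formula is indeed a consequence of $R=T$ via the Wiles numerical criterion, but the twist by $\chi$ does \emph{not} reduce to $R=T$ for any modular form (note $\mathrm{ad}^0(\rho_f\otimes\chi)=\mathrm{ad}^0\rho_f$, so twisting $f$ gives you back the untwisted adjoint). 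What you are really invoking is a case of the symmetric-square/adjoint main conjecture twisted by a quadratic character, which is a substantially different theorem from anything cited in the paper and is not available in the generality you need from the references at hand. Your alternative at the end, reducing to a comparison of Gross periods, is closer in spirit, but the paper does not carry out that comparison directly either; it is precisely because such a direct comparison is delicate that the paper routes everything through the Iwasawa main conjecture for $\rho_f$.
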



We choose an imaginary quadratic extension $K$ of $F$ that is
inertia at $\mathfrak{q}$ when $[F:\mathbb{Q}]$ is odd. Denote $KF'$
by $K'$. Fix a place $\mathfrak{p}$ of $F$  above $p$. 
Let $K_\infty$ be the $\mathfrak{p}$-anticyclotomic
$\mathbb{Z}_p$-extension of $K$.

Using the canonical periods one attaches to $f$,
$f\otimes\chi_{F'/F}$ and $f'$ the $p$-adic $L$-functions
$$L_{p}(K_{\infty} ,f), \
L_{p}(K_{\infty} ,f\otimes\chi_{F'/F})  \ \text{ and }
L_{p}(K'K_{\infty} ,f').$$ All of them can be considered as elements
of $\mathcal{O}_{\mathfrak{P}}[[\mathrm{Gal}(K_{\infty} /K)]]$. The
ring $\mathcal{O}_{\mathfrak{P}}\supset \mathcal{O}_f$ is some
coefficient ring that is clearly defined in our context. To prove
Theorem \ref{thm:main} we only need to show
\begin{equation}\label{eq:main}
(L_{p}(K'K_{\infty} ,f'))=(L_{p}(K_{\infty} ,f))\cdot
(L_{p}(K_{\infty} ,f\otimes\chi_{F'/F}))  \end{equation} in
$\mathcal{O}_{\mathfrak{P}}[[\mathrm{Gal}(K_{\infty} /K)]].$

Our strategy is to use Iwasawa main conjecture that provides an
equality between a quantity measuring Selmer groups and $p$-adic
$L$-functions. Actually what we need is Iwasawa main conjecture for
Hilbert modular forms in the anticyclotomic setting.

Its proof is divided into two parts, one part proving one
divisibility by Ribet's method, and the other proving the converse
divisibility by Euler systems. The former divisibility was proved by
Skinner and Urban \cite{S-U} for elliptic modular forms, and was
proved by Wan \cite{Wan} for Hilbert modular forms. When
$[F:\mathbb{Q}]$ is odd, Wan needs the condition that Ihara Lemma
for Shimura curves holds. The latter divisibility was proved by
Bertolini and Darmon \cite{BD07} for elliptic curves, and by Chida
and Hsieh \cite{CH15} for elliptic modular forms. It was proved by
Longo \cite{Longo} and Wang \cite{Wang} for Hilbert modular forms
assuming Ihara Lemma for Shimura curves. This condition was removed
by the second author \cite{Xie}.

Combining results in \cite{Wan} and \cite{Xie} we obtain the
following theorem. Remark that we do not need Ihara Lemma for
Shimura curves even when $[F:\mathbb{Q}]$ is odd, though such a
condition is needed in \cite{Wan}.

\begin{thm}\label{thm:Iw-main-int}
Assume that $f$ satisfies conditions $(\mathrm{CR}^+)$,
$(\mathfrak{n}^+\text{-}\mathrm{DT})$, $(\mathrm{PO})$ and $(\mathrm{Fuji}1$-$4)$.
Then we have $$(L_p(K_\infty, f))=\mathrm{char}\ \mathrm{Sel}
(K_\infty, A_{\rho})^\vee .$$
\end{thm}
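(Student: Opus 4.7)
The plan is to establish the identity of ideals by proving the two divisibilities separately and invoking the standard combination argument. That is, I will show on the one hand that $L_p(K_\infty, f)$ divides $\mathrm{char}\ \mathrm{Sel}(K_\infty, A_\rho)^\vee$, and on the other hand that $\mathrm{char}\ \mathrm{Sel}(K_\infty, A_\rho)^\vee$ divides $L_p(K_\infty, f)$, both as ideals of the anticyclotomic Iwasawa algebra. Since the coefficient ring $\mathcal{O}_\mathfrak{P}[[\mathrm{Gal}(K_\infty/K)]]$ is a Krull dimension two regular ring (after extending scalars suitably), equality of the two heights-one ideals on both sides follows from the two divisibilities.

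For the divisibility $L_p(K_\infty,f)\mid \mathrm{char}\ \mathrm{Sel}(K_\infty,A_\rho)^\vee$, I would quote the work of Wan \cite{Wan}, which adapts the Eisenstein congruence method of Skinner--Urban \cite{S-U} to the Hilbert modular setting. The hypotheses $(\mathrm{CR}^+)$, $(\mathrm{PO})$ and the Fujiwara-type conditions $(\mathrm{Fuji}1$--$4)$ are precisely what is needed to run his argument in a way that is compatible with our minimal/ordinary setup, and the condition $(\mathfrak{n}^+\text{-}\mathrm{DT})$ ensures the appropriate definite/indefinite sign considerations at ramified primes are met so that the Eisenstein congruences yield elements of the correct Selmer group.

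For the converse divisibility $\mathrm{char}\ \mathrm{Sel}(K_\infty,A_\rho)^\vee\mid L_p(K_\infty,f)$, the approach is the Euler system / Kolyvagin-type argument first developed by Bertolini--Darmon \cite{BD07} for elliptic curves, generalized by Chida--Hsieh \cite{CH15} to elliptic modular forms of higher weight, and then lifted to the Hilbert setting by Longo \cite{Longo} and Wang \cite{Wang}. The Euler system is built from CM points on a compatible system of Shimura curves attached to quaternion algebras over $F$, and its norm-compatibility and reciprocity laws together with a $p$-adic analog of Kolyvagin's descent yield the desired bound on the Selmer group in terms of $L_p(K_\infty,f)$.

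The main obstacle is that both Wan's and Longo--Wang's arguments a priori require Ihara's lemma for Shimura curves when $[F:\mathbb{Q}]$ is odd, and this is not known in general. The key input that allows us to proceed unconditionally is the second author's recent work \cite{Xie}, where a level-raising/congruence argument bypasses the Ihara lemma hypothesis in the Euler system divisibility. The point I want to stress is that once one has the Euler system divisibility unconditionally, the combination argument (comparing ranks, using the analytic/algebraic $\mu$-invariant vanishing under our residual hypotheses, and the control theorem for ordinary Selmer groups) lets us upgrade Wan's divisibility to hold without assuming Ihara's lemma there either. Executing this upgrade carefully, and checking that the hypotheses $(\mathrm{CR}^+)$, $(\mathfrak{n}^+\text{-}\mathrm{DT})$, $(\mathrm{PO})$, $(\mathrm{Fuji}1$--$4)$ on $f$ are exactly the ones that appear in both \cite{Wan} and \cite{Xie}, is the verification step, after which the two divisibilities combine to give the claimed equality.
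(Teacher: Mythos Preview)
Your overall plan (prove both divisibilities and combine) is correct, and you have correctly identified Xie's result \cite{Xie} as giving the Euler-system divisibility $\mathrm{char}\,\mathrm{Sel}(K_\infty,A_\rho)^\vee \supseteq (L_p(K_\infty,f))$ unconditionally. However, there is a genuine gap in how you handle the other divisibility.

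You write that ``once one has the Euler system divisibility unconditionally, the combination argument \ldots\ lets us upgrade Wan's divisibility to hold without assuming Ihara's lemma there either,'' and you gesture at rank comparisons, $\mu$-invariant vanishing, and control theorems. But none of these tools actually produces Wan's divisibility for $f$ over $F$ when $[F:\mathbb{Q}]$ is odd: Wan's theorem simply does not apply to $f$ in that case, and having Xie's divisibility for $f$ gives you no leverage on the Eisenstein side. The paper does \emph{not} apply Wan's theorem to $f$ at all.

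The actual mechanism is a base-change trick. One chooses an auxiliary totally real quadratic extension $F'/F$ (constructed so that all primes dividing $p\mathfrak{n}$ split in $F'$ and all the hypotheses transfer to the base change $f'$), so that $[F':\mathbb{Q}]$ is even. Wan's theorem then applies unconditionally to $f'$ over $F'$, giving $\mathrm{char}\,\mathrm{Sel}(\mathcal{K}'_\infty,A_{\rho'})^\vee \subseteq (L_p(\mathcal{K}'_\infty,f'))$ after inverting $p$. Specializing down and using the decomposition
\[
\mathrm{Sel}(K'\mathcal{K}_\infty,A_{\rho'}) \;=\; \mathrm{Sel}(\mathcal{K}_\infty,A_\rho)\ \oplus\ \mathrm{Sel}(\mathcal{K}_\infty,A_{\rho\otimes\chi_{F'/F}}),
\]
together with the factorization $L_p(K'\mathcal{K}_\infty,f')\doteq L_p(\mathcal{K}_\infty,f)\cdot L_p(\mathcal{K}_\infty,f\otimes\chi_{F'/F})$, one obtains a product inclusion for the two factors. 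Now Xie's divisibility, applied separately to \emph{both} $f$ and $f\otimes\chi_{F'/F}$, gives the reverse inclusion for each factor; if either inclusion were strict, the product inclusion coming from Wan would be violated. This forces equality in $\mathcal{O}_\mathfrak{P}[[\Gamma^-_\mathfrak{p}]]_E$, and then the vanishing of the analytic $\mu$-invariant (Hung) upgrades it to an equality in $\mathcal{O}_\mathfrak{P}[[\Gamma^-_\mathfrak{p}]]$. You should replace your vague ``combination argument'' with this explicit base-change and factor-comparison step.
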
 See our context for Conditions $(\mathrm{CR}^+)$,
$(\mathfrak{n}^+\text{-}\mathrm{DT})$, $(\mathrm{PO})$ and
$(\mathrm{Fuji}1$-$4)$.

By Theorem \ref{thm:Iw-main-int} we have $$
(L_{p}(K_{\infty},f))=\mathrm{char}\
\mathrm{Sel}(K_{\infty},A_{\rho})^{\vee} $$ and
    $$  (L_{p}(K_{\infty},f\otimes\chi_{F'/F}))=\mathrm{char}\ \mathrm{Sel}(K_{\infty},A_{\rho\otimes\chi_{F'/F}})^{\vee}    .$$
Using the relation
$$\mathrm{Sel}(K'K_{\infty},A_{\rho})=\mathrm{Sel}(K_{\infty},A_{\rho})\bigoplus\mathrm{Sel}(K_{\infty},A_{\rho\otimes\chi_{F'/F}})$$
we obtain \begin{equation}\label{eq:Lp-eq} (L_{p}(K_{\infty},f)\cdot
L_{p}(K_{\infty},f\otimes\chi_{F'/F}))= \mathrm{char}\
\mathrm{Sel}(K'K_{\infty},A_{\rho})^{\vee}. \end{equation}

Let $\mathfrak{p}'$ be a place of $F'$ above $\mathfrak{p}$. Let
$K'_\infty$ be the $\mathfrak{p}'$-anticyclotomic
$\mathbb{Z}_p$-extension of $K'$.    With $K'_\infty$ instead of
$K_\infty$ we also have $$ (L_{p}(K_{\infty}',f'))=\mathrm{char}\
\mathrm{Sel}(K_{\infty}',A_{\rho})^{\vee},$$ from which we deduce
that \begin{equation}\label{eq:con}(L_{p}(K'K_{\infty},f'))\supseteq
\mathrm{char}\ \mathrm{Sel}(K'K_{\infty},A_{\rho})^{\vee}.
\end{equation} Combining (\ref{eq:Lp-eq}) and (\ref{eq:con}) we
obtain
\begin{equation}\label{eq:contain} (L_{p}(K'K_{\infty},f'))\supseteq (L_{p}(K_{\infty},f)\cdot
L_{p}(K_{\infty},f\otimes\chi_{F'/F})) . \end{equation} As
(\ref{eq:main}) holds in
$\mathcal{O}_{\mathfrak{P}}[[\mathrm{Gal}(K_{\infty}
/K)]][\frac{1}{p}]$, and as the $\mu$-invariants of
$L_{p}(K_{\infty} ,f)$ and $L_{p}(K_{\infty} ,f\otimes\chi_{F'/F})$
are zero, (\ref{eq:contain}) implies (\ref{eq:main}).

The paper is organized as follows. In Section \ref{sec:a} we collect
some basic facts about automorphic forms on definite quaternion
algebras. In Section \ref{sec:b} we recall the construction of
anticyclotomic $p$-adic $L$-functions via Theta elements. In Section
\ref{sec:d}, we combine Wan's result in \cite{Wan} with the result
in \cite{Xie} to prove Iwasawa main conjecture for Hilbert modular
form in the anticyclotomic setting (under certain conditions).
Finally we prove Theorem \ref{thm:main} in Section \ref{sec:e}.

In Sections \ref{sec:a}-\ref{sec:d}, the imaginary quadratic
extension $K$ of $F$ is fixed. In Section \ref{sec:e}, we will
choose a suitable $K$.

The authors thank B.H. Gross for his helpful advice.

\subsection*{Notations}

We fix a totally real number field \(F\) over \(\mathbb{Q}\). Let
\(\Sigma_{F}\) be the set of all real embeddings of \(F\) and let
\(\Sigma_{p}\) be the set of all places of \(F\) above \(p\).

For a number field \(L\), and each place \(v\) of \(L\) above a
prime number \(l\), let \(|\cdot|_{v}\) or \(|\cdot|_{L_{v}}\) be
the absolute value on \(L_{v}\) defined by
\(|x|_{v}=|N_{L_{v}/\mathbb{Q}_{l}}(x)|_{l}\), and  let
\(\varpi_{v}\) be a uniformizer of \(\mathcal{O}_{L_{v}}\).

Let $E$ be a finite extension of $\BQ$, $\mathcal{O}$ be the ring of
integers in $E$ and $\mathfrak{P}$ be a prime  of $\mathcal{O}$
above $p$ such that $\mathcal{O}_\mathfrak{P}$ contains
$\mathcal{O}_f$. Let $\omega$ be a uniformizer of
$\mathcal{O}_\mathfrak{P}$. For each positive integer $n$ we put
$\mathcal{O}_{n}=\mathcal{O}_\mathfrak{P}/\omega^n$. We consider
$E_\mathfrak{P}$, $\mathcal{O}_\mathfrak{P}$ and $\mathcal{O}_n$ as
coefficient rings, and let $G_F=\mathrm{Gal}(\overline{F}/F)$ act trivially on them.

Let $\epsilon$ be the $p$-adic cyclotomic character of $G_F$.

\section{Automorphic representations on definite quaternion
algebras}\label{sec:a}

Let $f$ be a Hilbert cusp newform of even parallel weight $k$ and
trivial central character. Let $\{\mathbf{a}_v\}_v$ be the system of
Hecke eigenvalues attached to $f$.

Let \(\pi\) be the unitary irreducible cuspidal automorphic
representation on \(\text{GL}_2(\mathbf{A}_{F})\) corresponding to
$f$. \noindent We write the conductor of \(\pi\) in the form
\[\mathfrak{n}=\prod\mathfrak{n}^{+}\mathfrak{n}^{-},\]
where \(\mathfrak{n}^{+}\) (resp. \(\mathfrak{n}^{-}\)) is only
divisible by primes that are split (resp. inert or ramified) in
\(K\). We assume the following conditions hold.
\begin{quote}
$\bullet$ \((\mathfrak{n},p)=1\). \\
$\bullet$ \(\mathfrak{n}^{-}\) is square-free, and the number of
prime factors of \(\mathfrak{n}^{-}\) has the same parity as
\([F:\mathbb{Q}]\).
\\ $\bullet$ If \(v \, | \,
\mathfrak{n}^{-}\),  \(\pi_{v}\) is a special representation
\(\sigma(\mu_v,\mu_v|\cdot|_{v}^{-1})\) with unramified character
\(\mu_v\).
\end{quote}

Let \(B\) be the definite quaternion algebra over \(F\) with
discriminant \(\mathfrak{n}^{-}\). Let $G$ be the algebraic group
$\mathrm{Res}_{F/\mathbb{Q}}B^\times$. For each $\sigma\in
\Sigma_F$, the isomorphism $$(B\otimes
_{\sigma(F)}\mathbb{R})\otimes_{\mathbb{R}}\mathbb{C}\cong
M_2(\mathbb{C})$$ induces an embedding $$\iota_\sigma:
G(\mathbb{R}_\sigma)\hookrightarrow\GL_2(\mathbb{C}).$$ Here, the
index $\sigma$ in $\mathbb{R}_\sigma$ indicates that $F$ is embedded
into $\mathbb{R}$ via $\sigma$.

Let \(\pi'=\otimes\pi_{v}'\) be the unitary irreducible cuspidal
automorphic representation on \(G(\mathbf{A}_{F})\) with trivial
central character attached to \(\pi\) via Jacquet-Langlands
correspondence. Then the archimedean constituent of $\pi'$ is
isomorphic to \( (\rho_{k,\infty},V_{k,\infty}(\mathbb{C}))\). We
describe \((\rho_{k,\infty},V_{k,\infty}(\mathbb{C}))\) as follows.

For a commutative ring \(A\), we denote by
\(V_{k}(A)=\text{Sym}^{k-2}(A^2)\) the set of homogeneous
polynomials of degree \(k-2\) with coefficients in \(A\). Write
\[V_{k}(A)=\bigoplus\limits_{-\frac{k}{2}<m<\frac{k}{2}}A\cdot \textbf{v}_{m} \ \ \ (\textbf{v}_m:=X^{\frac{k-2}{2}-m}Y^{\frac{k-2}{2}+m}).\]
Let \(\rho_{k}:\text{GL}_2(A)\rightarrow \text{Aut} V_{k}(A)\) be
the unitary representation defined by
\[\rho_{k}(g)P(X,Y)=\text{det}(g)^{-\frac{k-2}{2}}\cdot P((X,Y)g) \ \ \ (P(X,Y)\in V_{k}(A)).  \]
If \(A\) is a \(\mathbb{Z}_{(p)}\)-algebra with \(p>k-2\), we define
a perfect pairing \[\langle \cdot , \cdot \rangle_{k}:V_{k}(A)\times
V_{k}(A)\longrightarrow A\] by
\[\langle\sum\limits_{-\frac{k}{2}<i<\frac{k}{2}}a_{i}\textbf{v}_{i},
\sum\limits_{-\frac{k}{2}<j<\frac{k}{2}}b_{j}\textbf{v}_{j} \
\rangle_{k}
=\sum\limits_{-\frac{k}{2}<m<\frac{k}{2}}a_{m}b_{-m}\cdot
(-1)^{\frac{k-2}{2}+m}\frac{\Gamma(\frac{k}{2}+m)\Gamma(\frac{k}{2}-m)}{\Gamma(k-1)}.\]
This pairing is \(\text{GL}_2(A)\)-invariant, which means
\[\langle \, \rho_{k}(g)P,\rho_{k}(g)P'\rangle_{k}=\langle P,P'\rangle_{k} \ \ \ (\forall \ P,P'\in V_{k}(A)).\]

For each $\sigma\in\Sigma_F$ put $k_\sigma=k$. Put
\(V_{k,\infty}(\mathbb{C})=\otimes_{\sigma\in\Sigma_{F}}V_{k_{\sigma}}(\mathbb{C})\)
and let \(\rho_{ k,\infty }\) be the representation of
\(G(\mathbb{R})\) defined by
\[\rho_{k,\infty}:G(\mathbb{R})\xrightarrow[]{\prod_{\sigma}\iota_{\sigma}}\prod\limits_{\sigma}\text{GL}_2(\mathbb{C})
\xrightarrow[]{\prod_{\sigma}\rho_{k_{\sigma}}}\text{Aut}V_{k,\infty}(\mathbb{C}).\]
For \(\mathbf{m}=\sum\limits_{\sigma\in\Sigma_{F}}m_{\sigma}\sigma\)
with \(-\frac{k }{2}<m_{\sigma}<\frac{k }{2}\), let
\(\textbf{v}_\mathbf{m}\) denote the element
\(\otimes_{\sigma}\textbf{v}_{m_{\sigma}}\) in
\(V_{k,\infty}(\mathbb{C})\).  We define \(\langle \cdot , \cdot
\rangle_{k,\infty}\) to be the pairing induced from \(\{\langle
\cdot , \cdot \rangle_{k_{\sigma}}\}_{\sigma\in\Sigma_{F}}\).

\begin{defn}
If \(A\) is a \(K\)-algebra and \(U\subset
G(\mathbf{A}_{F}^\infty)\) is an open compact subgroup. A $A$-valued
modular form on \(G(\mathbf{A}_{F}^\infty)\) of trivial central
character, parallel weight \(k\) and level \(U\) is a function \[h:
G(\mathbf{A}_{F}^\infty)\longrightarrow V_{k,\infty}(A)\] that
satisfies
\[h(z\gamma bu)=\rho_{k,\infty}(\gamma)h(b)\]
for all \(\gamma\in B^\times, \ u\in U, \ b\in\widehat{B}^{\times}\)
and \(z\in\widehat{F}^{\times}\). We denote by \(M_{k}^{B}(U,A)\)
the space of such forms. The right translation makes
\(M_{k}^{B}(A):=\varinjlim\limits_{U}M_{k}^{B}(U,A)\) an admissible
\(G(\mathbf{A}_{F}^\infty)\)-representation.\end{defn} For
\(\textbf{v}\in V_{k,\infty}(\mathbb{C})\) and \( h\in
M_{k}^{B}(\mathbb{C})\), we can attach to \(\textbf{v}\otimes h\) an
automorphic form \(\Psi(\textbf{v}\otimes h)\) on
\(G(\mathbf{A}_{F})\) by
\[\Psi(\textbf{v}\otimes h)(g):=\langle \rho_{k,\infty}(g_{\infty})\textbf{v} ,h(g^{\infty}) \rangle_{k,\infty}.\]
Let \(\mathcal{A}_{k}^{B}(U,\mathbb{C})\) be the space of
automorphic forms on \(G(\mathbf{A}_{F})\)
consisting of functions \(\Psi(\textbf{v}\otimes h)\) for \(h\in M_{k}^{B}(U,\mathbb{C})\) and \(\textbf{v} \in V_{k,\infty}(\mathbb{C})\).\\

Realize $\pi'^{\infty}$ in $M^B_{k}(\BC)$ so that
$\varphi^\infty:=\bigotimes\limits_{v: \text{finite} }\varphi_v$ is
an element of $M^B_{k}(\BC)$. Let $J$ be a nonempty subset of
$\Sigma_p$. For each $\mathfrak{p}\in J$, let $\alpha_\mathfrak{p}$
be the unit root of
$X^2-\mathbf{a}_\mathfrak{p}X+N(\mathfrak{p})^{k-1}$. We put
$$ \varphi^{\infty , \dagger_J}= \varphi^\infty-
\Big(\prod_{\mathfrak{p}\in J}\frac{1}{\alpha_\mathfrak{p}}
\pi'(\wvec{1}{0}{0}{\varpi_\mathfrak{p}})\Big)\varphi^\infty $$ and
$$\varphi^{\dagger_J}_{\pi'}= \Psi( \mathbf{v}_{\mathbf{0}} \otimes
\varphi^{\infty,\dagger_J}) .$$

\section{Anticyclotomic \(p\)-adic \(L\)-functions} \label{sec:b}

\subsection{Optimal embeddings and Gross points}
Write \(B=K\bigoplus KI\) with \(I^{2}=\beta\in F^{\times}\). We may
take \(I\) satisfies the following conditions.
\begin{quote}
$\bullet$ \(\sigma(\beta)<0\) for all \(\sigma\in\Sigma_{F}\).\\
$\bullet$ \(\beta\in(\mathcal{O}_{F_{v}})^{2}\) for all \(v \, | \,
p \mathfrak{n}^+ \), and \(\beta\in\mathcal{O}_{F_{v}}^{\times}\)
for all \(v \, | \, N_{K/F}D_{K}\).
\end{quote}

Fix a square root \(\sqrt{\beta}\in\overline{\mathbb{Q}}^{\times}\)
of \(\beta\). We fix an isomorphism \(i=\prod
i_{v}:\widehat{B}^{(\mathfrak{n}^{-})}\simeq
 M_{2}(\mathbf{A}_{F,f}^{(\mathfrak{n}^{-})})\) as follows.
Fix an element \(\theta\in K\) such that \(\{1,\theta\}\) is a basis
of \(\mathcal{O}_{K_{v}}\) over \(\mathcal{O}_{F_{v}}\) for all \(v
\, | \, p \mathfrak{n}^+ \). For each \(v \, | \, p\mathfrak{n}^+\),
we define the isomorphism \(i_{v}:B_{v}:=B\otimes_{F} F_{v}\simeq
M_{2}(F_{v})\) by
\[i_{v}(\theta)=\begin{pmatrix}
    T(\theta) & -N(\theta) \\
    1         & 0
\end{pmatrix}, \ i_{v}(I)=\sqrt{\beta}\begin{pmatrix}
    -1 & T(\theta)\\
    0  & 1
\end{pmatrix}.\] Here, we write \(T\) and \(N\) for the reduced trace and norm of \(B\).
For each finite place \(v\nmid p\mathfrak{n} \), we fix an
isomorphism \(i_{v}:B_{v}\simeq M_{2}(F_{v})\) such that
\[i_{v}(\mathcal{O}_{K}\otimes\mathcal{O}_{F_{v}})\subset M_{2}(\mathcal{O}_{F_{v}}).\]
For \(\sigma\in\Sigma_{F}\), we define \(i_{\sigma}:B\hookrightarrow
M_{2}(\mathbb{C})\) by
\[a+bI\mapsto i_{\sigma}(a+bI):=\begin{pmatrix}
    \sigma(a) & \sigma(b\beta)\\
    \sigma(\Bar{b}) & \sigma(\Bar{a})
\end{pmatrix} \ (a,b\in K).\]

We fix a decomposition
\(\mathfrak{n}^{+}=\mathfrak{N}^{+}\overline{\mathfrak{N}^{+}}\)
once and for all, we define \(\varsigma_{v}\in G(F_{v})\) for
\(v\notin J\) as follows.
\begin{quote} $\bullet$ \(\varsigma_{v}=1\) If \(v\notin J\) and \(v\nmid \mathfrak{n}^{+}\).\\
$\bullet$ \(\varsigma_{v}=\delta^{-1}\begin{pmatrix}
    \theta & \Bar{\theta}\\
    1      & 1
\end{pmatrix}\) if \(v\notin J\) and \(v=\varpi\Bar{\varpi}\) with \(\varpi \, | \,
\mathfrak{N}^{+}\).\end{quote} For each non-negative integer \(n\)
and for each \(\mathfrak{p}\in J\), we define
\(\varsigma_{\mathfrak{p}}^{(n_{\mathfrak{p}})}\in
G(F_{\mathfrak{p}})\) as follows. \begin{quote} $\bullet$
\(\varsigma_{\mathfrak{p}}^{(n_{\mathfrak{p}})}=\begin{pmatrix}
    \theta & -1\\
    1      & 0
\end{pmatrix}\begin{pmatrix}
    \varpi_{\mathfrak{p}}^{n_{\mathfrak{p}}} & 0\\
    0                       & 1
\end{pmatrix}\in \text{GL}_{2}(F_{\mathfrak{p}})\) for \(\mathfrak{p}\in J\) split in \(K\).\\
$\bullet$
\(\varsigma_{\mathfrak{p}}^{(n_{\mathfrak{p}})}=\begin{pmatrix}
    0 & 1\\
    -1 & 0
\end{pmatrix}\begin{pmatrix}
    \varpi_{\mathfrak{p}}^{n_{\mathfrak{p}}} & 0\\
    0                       & 1
\end{pmatrix}\) for \(\mathfrak{p}\in J\) inert or ramified in
\(K\). \end{quote} Define
\(x_{\vec{n}}:\mathbf{A}_{K}^{\times}\longrightarrow
G(\mathbf{A}_{F})\) by
\[x_{\vec{n}}(a):=a\cdot \varsigma_{J}^{(\vec{n})} \ \ \  (\varsigma_{J}^{(\vec{n})}=\prod\limits_{\mathfrak{p}\in J}\varsigma_{\mathfrak{p}}^{(n_{\mathfrak{p}})}\prod\limits_{v\notin J}\varsigma_{v}).\]
One calls \(\{x_{\vec{n}}(a)\}_{a\in\mathbf{A}_{K}^{\times}}\)
Gross points of conductor \(\prod\limits_{\mathfrak{p}\in
J}\mathfrak{p}^{n_{\mathfrak{p}}}\).

\subsection{Theta elements and $p$-adic $L$-functions}

Let $J$ be a nonempty subset of $\Sigma_{p}$. For each $J$-tuple of
nonnegative integers $\vec{n}=(n_{\mathfrak{p}})_{\mathfrak{p}\in
J}$, let
$$\mathcal{O}_{\vec{n} }:=\mathcal{O}_F +
 \prod_{\mathfrak{p}\in J}\mathfrak{p}^{n_\mathfrak{p}}
\mathcal{O}_K$$ be the order of $K$ of conductor
$\prod_{\mathfrak{p}\in J}\mathfrak{p}^{n_\mathfrak{p}} $. Let
$\widetilde{K}^-_{J,\vec{n}}$ be the ring class field of $K$ of
conductor $\prod_{\mathfrak{p}\in J}\mathfrak{p}^{n_\mathfrak{p}}$
and let $\mathcal{G}_{\vec{n}}^{-}$ be its Galois group.

Put
$\widetilde{K}^-_{J,\infty}=\bigcup\limits_{\vec{n}}\widetilde{K}^-_{J,\vec{n}},
$ where $\vec{n}$ runs over all $J$-tuples. Let
$\mathcal{G}^-_{J}=\mathrm{Gal}(\widetilde{K}^-_{J,\infty}/K)$ be
its Galois group (the complex conjugation acting by $-1$ on the
Galois group). Let $K^-_{J,\infty}$ be the subfield of
$\widetilde{K}^-_{J,\infty}$ such that
$\Gamma^-_{J}:=\mathrm{Gal}(K^-_{J,\infty}/K)$ is the maximal
$\BZ_p$-free quotient of $\mathcal{G}^-_{J}$.

We are particularly interested in two cases, the case of
$J=\Sigma_{p}$ and the case of $J=\{\mathfrak{p}\}$ a single set.
When $J=\Sigma_{p}$, we write $\mathcal{K}^-_{\infty}$ for
$K^-_{\Sigma_{p},\infty}$. Then $\mathcal{K}^-_\infty$ is the
maximal abelian anticyclotomic $\BZ_p$-extension of $K$ unramified
outside $p$ with Galois group denoted as $\Gamma^-_K$, which is
isomorphic to $\BZ_p^{ [F:\BQ]}$. When $J=\{\mathfrak{p}\}$,
$\Gamma^-_J$ is isomorphic to $\BZ_p^{ [F_\mathfrak{p}:\BQ_p]}$.  We
write $\widetilde{K}_{n }=\widetilde{K}^-_{\{\mathfrak{p}\}, n }$,
$\widetilde{K}_{\infty}=\widetilde{K}^-_{\{\mathfrak{p}\}, \infty
}$, $ {K}_n= {K}^-_{\{\mathfrak{p}\}, n}$ and
$K_\infty=K^-_{\{\mathfrak{p}\},\infty}$.

Let $F_\infty$ be the cyclotomic $\BZ_p$-extension of $F$ and put
$\mathcal{K}^+_\infty=F_\infty K$. The Galois group
$\Gamma^+_K:=\mathrm{Gal}(\mathcal{K}^+_\infty / K)$ is isomorphic
to $\BZ_p$. Put
$\mathcal{K}_\infty=\mathcal{K}^+_\infty\mathcal{K}^-_\infty$ and
$\Gamma_K=\mathrm{Gal}(\mathcal{K}_\infty/K)$. Then $\Gamma_K \cong
\BZ_p^{[F:\BQ] +1}$.

Let $\phi\in\pi'$ be a normalized new eigenform. See \cite[Section
4]{Xie21} for the definition of $\phi$. The function
\[a\mapsto \phi'^{\dagger_J}(x_{\vec{n}}(a)) :=\Psi(\mathbf{v}_{\mathbf{0}}\otimes
\phi^{\dagger_J}(a\varsigma_{J}^{(\vec{n})})) \] on
\(\mathbf{A}_{K}^{\times}\) is
\(\mathbf{A}_{F}^{\times}K^{\times}K_{\infty}^{\times}\widehat{\mathcal{O}}_{\vec{n}
}^{\times}\)-invariant. For each \(J\)-tuple
\(\vec{n}=(n_{\mathfrak{p}})_{\mathfrak{p}\in J}\), let \([ \ \
]_{\vec{n} }\) be the composition  map
\begin{displaymath}
    \xymatrix{
       \mathbf{A}_{K}^{\infty,\times}/ K^{\times} \ar[r]^{\text{rec}_{K}} & \text{Gal}(\Bar{K}|K)^{\text{ab}} \ar[r] & \mathcal{G}_{\vec{n} }.
    }
\end{displaymath}
The map  \([ \ \ ]_{\vec{n} }\) factors through  \(X_{\vec{n}
}:=\mathbf{A}_{F}^{\infty,\times}\backslash\mathbf{A}_{K}^{\infty,\times}/K^{\times}\widehat{\mathcal{O}}_{\vec{n}
}^{\times},\).

We define the theta element \(\Theta_{\vec{n} }\) by
\[\Theta_{\vec{n} }=\frac{1}{\prod\limits_{\mathfrak{p}\in J}\alpha_{\mathfrak{p}}^{n_{\mathfrak{p}}}}
\cdot\sum\limits_{a\in X_{\vec{n}
}}\phi'^{\dagger_J}(x_{\vec{n}}(a)) [a]_{\vec{n} }.\] If
\(n_{\mathfrak{p}}'\geq n_{\mathfrak{p}}\) for all \(\mathfrak{p}\in
J\), we write \(\vec{n}'\geq\vec{n}\). When \(\vec{n}'\geq\vec{n}\),
we have a natural quotient map
\[\mathrm{pr}_{\vec{n}',\vec{n}}:\mathcal{G}_{\vec{n}' }\longrightarrow\mathcal{G}_{\vec{n} }.\]

\begin{prop}$\cite{Xie21}$ \label{theta-def} If \(\vec{n}'\geq\vec{n}\), then
\[\mathrm{pr}_{\vec{n}',\vec{n}}(\Theta_{\vec{n}' })=\Theta_{\vec{n} }.\]
\end{prop}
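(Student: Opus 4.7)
The plan is to reduce to a one-step change in $\vec{n}$ and then invoke the $U_\mathfrak{p}$-eigenproperty of $\phi'^{\dagger_J}$, which is exactly what the $\dagger_J$-stabilization is engineered to produce. By iterating over the coordinates, we may assume that $\vec{n}'$ differs from $\vec{n}$ at a single place $\mathfrak{p}_0\in J$ with $n'_{\mathfrak{p}_0}=n_{\mathfrak{p}_0}+1$. The natural surjection $X_{\vec{n}'}\twoheadrightarrow X_{\vec{n}}$ then has fibers given locally at $\mathfrak{p}_0$ by the quotient $\mathcal{O}_{\vec{n},\mathfrak{p}_0}^\times/\mathcal{O}_{\vec{n}',\mathfrak{p}_0}^\times$, whose explicit coset representatives one writes down in each of the three cases ($\mathfrak{p}_0$ split, inert or ramified in $K$), typically as $1+t\theta$ with $t$ running over a set of representatives of $\mathcal{O}_{F_{\mathfrak{p}_0}}/\mathfrak{p}_0$.

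Next, for $a\in\mathbf{A}_K^{\infty,\times}$ representing a class in $X_{\vec{n}}$ and for each coset representative $u_t$ in the fiber above $[a]_{\vec{n}}$, the Gross point $x_{\vec{n}'}(au_t)=au_t\varsigma_J^{(\vec{n}')}$ can be rewritten as $x_{\vec{n}}(a)\cdot g_{t,\mathfrak{p}_0}$, where $g_{t,\mathfrak{p}_0}\in G(F_{\mathfrak{p}_0})$ differs from $\varsigma_{\mathfrak{p}_0}^{(n_{\mathfrak{p}_0})}$ only by the matrix $\wvec{\varpi_{\mathfrak{p}_0}}{0}{0}{1}$ together with a unipotent factor depending on $t$. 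Summing over $t$ in the fiber, the standard coset decomposition of $U_{\mathfrak{p}_0}$ identifies
\[ \sum_t \phi'^{\dagger_J}\bigl(x_{\vec{n}'}(au_t)\bigr) \;=\; \pi'(U_{\mathfrak{p}_0})\phi'^{\dagger_J}\bigl(x_{\vec{n}}(a)\bigr) \;=\; \alpha_{\mathfrak{p}_0}\cdot\phi'^{\dagger_J}\bigl(x_{\vec{n}}(a)\bigr), \]
where the last equality uses that $\phi'^{\dagger_J}$ is by construction the $\alpha_{\mathfrak{p}_0}$-stabilization of $\phi'$, hence a $U_{\mathfrak{p}_0}$-eigenform with eigenvalue $\alpha_{\mathfrak{p}_0}$.

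Inserting this identity into the definition of $\Theta_{\vec{n}'}$ and pushing forward via $\mathrm{pr}_{\vec{n}',\vec{n}}$, the extra $\alpha_{\mathfrak{p}_0}$ produced by the fiber sum exactly cancels the extra $\alpha_{\mathfrak{p}_0}$ sitting in the normalization $1/\prod_\mathfrak{p}\alpha_\mathfrak{p}^{n'_\mathfrak{p}}$ (recall $n'_{\mathfrak{p}_0}=n_{\mathfrak{p}_0}+1$), so we obtain $\mathrm{pr}_{\vec{n}',\vec{n}}(\Theta_{\vec{n}'})=\Theta_{\vec{n}}$.

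The main obstacle is the second step: one must verify the matrix identity turning the fiber sum at level $\vec{n}'$ into the $U_{\mathfrak{p}_0}$-operator evaluated at $x_{\vec{n}}(a)$. This requires a short case-by-case computation depending on whether $\mathfrak{p}_0$ is split, inert, or ramified in $K$, since the explicit shape of $\varsigma_{\mathfrak{p}_0}^{(n_{\mathfrak{p}_0})}$ and of $\mathcal{O}_{\vec{n},\mathfrak{p}_0}^\times$ differs in these cases. In addition, the transition from $n_{\mathfrak{p}_0}=0$ (spherical level at $\mathfrak{p}_0$) to $n_{\mathfrak{p}_0}=1$ (Iwahori level) has to be treated separately, because the coset decomposition of $U_{\mathfrak{p}_0}$ at spherical level is slightly different from that at deeper Iwahori level, and at this boundary the role of the $\dagger_J$-stabilization is essential to absorb the spurious $\beta_{\mathfrak{p}_0}$-contribution.
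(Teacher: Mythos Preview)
The paper does not supply its own proof of this proposition: it is stated with the citation \cite{Xie21} and no argument is given, the compatibility being imported wholesale from that reference. So there is nothing in the paper to compare your proposal against line by line.

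That said, your sketch is the standard and correct strategy for this type of norm-compatibility. Reducing to a single-step jump at one $\mathfrak{p}_0\in J$, identifying the fiber of $X_{\vec{n}'}\to X_{\vec{n}}$ with $\mathcal{O}_{\vec{n},\mathfrak{p}_0}^\times/\mathcal{O}_{\vec{n}',\mathfrak{p}_0}^\times$, and recognizing the resulting sum as the $U_{\mathfrak{p}_0}$-operator acting on the $\alpha_{\mathfrak{p}_0}$-stabilized vector $\phi'^{\dagger_J}$ is exactly how the proof in \cite{Xie21} (and in the antecedent works of Chida--Hsieh and Hung in the $\GL_2/\BQ$ and Hilbert settings) proceeds. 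Your remark that the boundary case $n_{\mathfrak{p}_0}=0\to 1$ needs separate treatment, and that the $\dagger_J$-stabilization is precisely what kills the $\beta_{\mathfrak{p}_0}$-piece there, is also accurate and is the reason one works with $\phi'^{\dagger_J}$ rather than the spherical vector. The only caveat is that ``proof proposal'' is really a roadmap: the matrix identities you allude to in the split/inert/ramified cases have to be written out to make this a proof, but there is no conceptual gap.
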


Put \(\widehat{\Theta}_{\vec{n} }=\jmath(\Theta_{\vec{n} }).\) We
define
\[\widehat{\Theta}_{J }:=\varprojlim\limits_{\vec{n}}\widehat{\Theta}_{\vec{n} }\in
\mathcal{O}_{\mathfrak{P}}[[\mathcal{G}_{J }]].\] By abuse of
notation the image of \(\widehat{\Theta}_{J }\) by the projection \(
\mathcal{O}_{\mathfrak{P}} [[\mathcal{G}_{J}]]\longrightarrow
\mathcal{O}_{\mathfrak{P}} [[\Gamma_{J}^{-}]]\) is also denoted as
\(\widehat{\Theta}_{J}\). We define the \(p\)-adic $L$-function of
$\pi $ for the extension \(\mathcal{K}_{\infty}^{-}\) by
\[ L_{p}(\mathcal{K}_{\infty}^{-},f)=\widehat{\Theta}_{J}^{2}. \]
For a closed subextension \(\mathcal{L}\) of \(K\) contained in
\(\mathcal{K}_{\infty}^{-}\), we define
\[L_{p}(\mathcal{L},f)=\mathrm{pr}_{\mathcal{L}}(L_{p}(\mathcal{K}_{\infty}^{-},f))\] where
\(\mathrm{pr}_{\mathcal{L}}\) is the projection map
\(\mathcal{O}_{\mathfrak{P}}[[\mathrm{Gal}(\mathcal{K}_{\infty}^{-}/K)]]\rightarrow\mathcal{O}_{\mathfrak{P}}[[\mathrm{Gal}(\mathcal{L}/K)\)]].
In \cite{Xie21} the second author gave an interpolation formula for
these $p$-adic $L$-functions. When $J=\{\mathfrak{p}\}$ is single,
$L_{p}(K_\infty,f)$ coincides with the $p$-adic $L$-function used in
\cite{Xie}.

\section{Selmer groups and Iwasawa main conjecture}\label{sec:d}

Let $f$ be a Hilbert newform of parallel even weight $k$. We need
the conditions $(\mathrm{CR}^+)$,
$(\mathfrak{n}^+\text{-}\mathrm{DT})$ and $(\mathrm{PO})$ used in
\cite{Xie}.

  $(\mathrm{CR}^+)$  1. $p>k+1$ and
$(\#(\mathcal{O}_{F}/\mathfrak{p})^\times)^{k-1}>5$.

2. The restriction of $\bar{\rho}_f$ to $G_{F(\sqrt{p^*})}$ is
absolutely irreducible, where $p^*=(-1)^{\frac{p-1}{2}}p$.

3. If
$\mathfrak{l}|\mathfrak{n}^-$, then $\bar{\rho}_f$ is ramified at $\mathfrak{l}$. 

4. If $\mathfrak{n}_{\bar{\rho}}$ denotes the Artin conductor of
$\bar{\rho}_{f}$, then $\mathfrak{n}/\mathfrak{n}_{\bar{\rho}}$ is
coprime to $\mathfrak{n}_{\bar{\rho}}$. \vskip 5pt

  $(\mathrm{PO})$  If $k=2$, then $a_v^2(f)\ {\backslash\hskip -10pt
\equiv } 1 \ (\mathrm{mod} \ p)$ for all $v|p$ . \vskip 5pt

  $(\mathfrak{n}^+\text{-}\mathrm{DT})$  If
$\mathfrak{l}||\mathfrak{n}^+$ and $\mathrm{N}(\mathfrak{l})\equiv 1
\ (\mathrm{mod} \ p)$, then $\bar{\rho}_f$ is ramified at
$\mathfrak{l}$. \vskip 5pt

For the purpose of applying Wan's result \cite[Theorem 3]{Wan}, we
also need the following conditions: 

  (Fuji1) For each $\mathfrak{l}|\mathfrak{n}$, if
$\bar{\rho}_f|_{I_{F_\mathfrak{l}}}$ is absolutely irreducible, then
$\mathrm{N}(\mathfrak{l})\equiv  {\hskip -10pt /}-1 \ (\mathrm{mod
}\ p)$.

  (Fuji2) There is a minimal modular lifting of
$\bar{\rho}_f$.

 (Fuji3) When $p=5$, the following case is excluded: the projective
image $\bar{G}$ of $\bar{\rho}_f$ is isomorphic to
$\mathrm{PGL}_2(\mathbb{F}_p)$ and the mod $p$ cyclotomic character
$\bar{\chi}$ factors through $\bar{G}^{\mathrm{ab}}\simeq \BZ/2\BZ$.

(Fuji4) $\bar{\rho}_f|_{F(\xi_p)}$ is absolutely irreducible.

Condition $(\mathrm{CR}^+$$3)$ is slightly stronger than that in
\cite{Xie}. Such a stronger version is needed in \cite[Theorem
3]{Wan}. Conditions in \cite[Theorem 3]{Wan} that are already
overlap with $(\mathrm{CR}^+)$,
$(\mathfrak{n}^+\text{-}\mathrm{DT})$ and $(\mathrm{PO})$ are not
included in (Fuji1-4).

\subsection{Selmer groups} \label{ss:selmer}

Let $L$ be a number field. For a finite place $v$ of $L$ and a
discrete $G_L$-module $M$, define the finite part of $H^1(L_v,M)$ by
$$H^1_\fin(L_v,M):= M^{I_{L_v}}/(\mathrm{Frob}_v-1)\cong \mathrm{ker}(H^1(L_v,M)\rightarrow
H^1(I_{L_v},M)),$$ and define the singular quotient of $H^1(L_v,M)$
by $$H^1_\sing(L_v,M):= H^1(L_v,M))/H^1_\fin(L_v,M).$$ Let $\rho=\rho^*_f=\rho_f\otimes \epsilon^{(2-k)/2}$ be the self-dual twist of $\rho_f$, $V_\rho$
the underlying representation space of $\rho$. Let $T_\rho$ be a
$G_F$-invariant lattice in $V_\rho$, and put $A_\rho=V_\rho/T_\rho$.
For each positive integer $n$, put $T_{n}:=T_{\rho}/\varpi^nT_\rho$
and $A_{n}:=\varpi^{-n}T_\rho/T_\rho$. Then
$T_\rho=\lim\limits_{\overleftarrow{\;\:n\:\:}}T_{n}$ and
$A_\rho=\lim\limits_{\overrightarrow{\;\:n\:\:}}A_{n}$.

When $\mathfrak{l}$ is above an $n$-admissible prime, or when
$\mathfrak{l}|p\mathfrak{n}^-$,   $A_{n}$  sits in a $G_{F_\mathfrak{l}}$-equivariant
short exact sequence of free $\mathcal{O}_n$-modules $$ \xymatrix{ 0
\ar[r]  & F^+_\mathfrak{l}A_{ n} \ar[r] & A_{ n} \ar[r] &
F^-_\mathfrak{l}A_{ n} \ar[r] & 0, }
$$
where $G_{F_\mathfrak{l}}$ acts on $F^+_\mathfrak{l}A_{ n}$ (resp. $F^-_\mathfrak{l}A_{ n}$) via $\pm\epsilon $ (resp. $\pm 1$).
One defines the ordinary part of $H^1(L_{\mathfrak{l}}, A_{ n})$ to be
the image of $$ H^1(G_{L_\mathfrak{l}}, F^+_\mathfrak{l}A_{
n})\rightarrow H^1(G_{L_\mathfrak{l}}, A_{ n}).
$$
One defines the ordinary part of $H^1(L_{\mathfrak{l}}, T_{ n})$
similarly \cite{Longo, Wang, Xie}.

When $\mathfrak{l}$ is a finite place of $F$, $M$ is a $G_F$-module,
and $L$ is a finite extension of $F$, for $?=\emptyset,\
\mathrm{fin}, \mathrm{sing}$ or $ \mathrm{ord}$ we put
$$ H^1_?(L_\mathfrak{l}, M) =\bigoplus_{v|\mathfrak{l}} H^1_?(L_v, M),
$$ where $v$ runs over places of $L$ above $\mathfrak{l}$.

Let $\res_v$ denote the restriction map $H^1(L, M)\rightarrow
H^1(L_v, M)$ at $v$.

\begin{defn} $($\cite[Definition 3.1]{Xie}$)$ Let $S$ be a
finite (maybe empty) set of finite places of $F$ that are coprime to
$p\mathfrak{n}$.
 For $M=A_{n}$ or $T_{n}$ we define the Selmer group
$\Sel^S(G_L, M)$ to be the group of elements $s\in H^1(G_L,M)$ such
that

$\bullet$ $\res_v(s)$ is arbitrary if $v$ is above $S$.

$\bullet$ $\res_v(s)\in H^1_\fin(L_v,M)$ if $v\nmid p\mathfrak{n}^-$
and $v$ is not above $S$;

$\bullet$ $\res_v(s)\in H^1_\ord(L_v, M)$ if $v| p\mathfrak{n}^-$
and $v$ is not above $S$.

For $M=A_\rho$ or $T_\rho$, we define $\Sel^S(G_L, M)$ in the same
way.
\end{defn}

By Lemma \ref{lem:p-ord-part} below, if we take $S$ to contain all
primes dividing $\mathfrak{n}$,  then $\Sel^S(G_L, M)$ defined above
coincides with the Selmer group defined in \cite{Wan}.

\begin{lem}\label{lem:p-ord-part} Suppose that $L$ is a finite extension of $F$ contained in $\mathcal{K}_\infty$.
Then the ordinary part of $H^1(L_p, A_n)$ coincides with that
defined in \cite{Wan}.
\end{lem}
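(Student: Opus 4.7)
The plan is to verify the comparison locally at each place $v$ of $L$ above $\mathfrak{p}\in \Sigma_p$. Both definitions are built from the short exact sequence
$$0\longrightarrow F^+_\mathfrak{p}A_n\longrightarrow A_n\longrightarrow F^-_\mathfrak{p}A_n\longrightarrow 0,$$
so I would first use the associated long exact cohomology sequence to identify the image of $H^1(G_{L_v}, F^+_\mathfrak{p}A_n)\to H^1(L_v, A_n)$ with the kernel of $H^1(L_v, A_n)\to H^1(L_v, F^-_\mathfrak{p}A_n)$. Wan's ordinary condition is the kernel of $H^1(L_v, A_n)\to H^1(I_{L_v}, F^-_\mathfrak{p}A_n)$, so the comparison reduces to showing
$$\ker\bigl(H^1(L_v, F^-_\mathfrak{p}A_n)\to H^1(I_{L_v}, F^-_\mathfrak{p}A_n)\bigr)=0.$$

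By inflation--restriction this kernel equals $H^1(L_v^{\mathrm{ur}}/L_v,(F^-_\mathfrak{p}A_n)^{I_{L_v}})=(F^-_\mathfrak{p}A_n)^{I_{L_v}}/(\mathrm{Frob}_v-1)$, so it suffices to show that $\mathrm{Frob}_v-1$ acts invertibly on $(F^-_\mathfrak{p}A_n)^{I_{L_v}}$. The action of $\mathrm{Frob}_v$ on $F^-_\mathfrak{p}A_n$ is, up to the sign coming from the self-dual twist, multiplication by $\alpha_\mathfrak{p}^{f(v/\mathfrak{p})}$, where $\alpha_\mathfrak{p}$ is the unit root of $X^2-\mathbf{a}_\mathfrak{p}X+N(\mathfrak{p})^{k-1}$. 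Condition $(\mathrm{PO})$ gives $\alpha_\mathfrak{p}^2\not\equiv 1\pmod{\omega}$ for $k=2$, and for $k>2$ the nontrivial cyclotomic twist forces the eigenvalue not to be a $p$-power root of unity modulo $\omega$. Since $\mathcal{K}_\infty/F$ is pro-$p$ at $\mathfrak{p}$, the residue degree $f(v/\mathfrak{p})$ is a $p$-power; as the order of $\bar\alpha_\mathfrak{p}$ in the residue field of $\mathcal{O}_\mathfrak{P}$ divides an integer coprime to $p$, we conclude $\alpha_\mathfrak{p}^{f(v/\mathfrak{p})}\not\equiv \pm 1\pmod{\omega}$, and Nakayama yields the required invertibility.

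The main obstacle is precisely this last step: ensuring that the Frobenius eigenvalue on $F^-_\mathfrak{p}A_n$ remains a non-root-of-unity modulo $\omega$ after ascending through arbitrary finite subextensions of $\mathcal{K}_\infty/F$. This hinges on $(\mathrm{PO})$ providing the initial non-congruence together with the pro-$p$ structure of $\mathcal{K}_\infty/F$ at $p$, which keeps the residue-degree growth coprime to the order of $\bar\alpha_\mathfrak{p}$ in the residue field and prevents accidental degeneration at higher levels of the tower.
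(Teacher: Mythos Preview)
Your approach is essentially the same as the paper's: both identify the ordinary subspace with $\ker\bigl(H^1(L_v,A_n)\to H^1(L_v,F^-_\mathfrak{p}A_n)\bigr)$ via the long exact sequence, reduce the comparison with Wan's definition to the vanishing of $H^1(G_{L_v}/I_{L_v},(F^-_\mathfrak{p}A_n)^{I_{L_v}})$ via inflation--restriction, and invoke condition $(\mathrm{PO})$ for that vanishing. The paper simply asserts this last step from $(\mathrm{PO})$ without further justification, whereas you spell out the Frobenius-eigenvalue analysis; your version is a faithful expansion of the same argument (with only minor imprecisions, e.g.\ the extension $\mathcal{K}_\infty/F$ is not literally pro-$p$ at $\mathfrak{p}$ since $[K:F]=2$, but this does not affect the conclusion).
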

\begin{proof}
The ordinary part of $H^1(L_p, A_n)$ defined above coincides with
the kernel of the map
$$H^1(L_p, A_n)\rightarrow \bigoplus_{\mathfrak{q}|p} H^1(G_{L_{\mathfrak{q}}},
F^-_\mathfrak{q}A_n).$$ In \cite{Wan} the ordinary part of $H^1(L_p,
A_n)$ is defined to be the kernel of the map
$$H^1(L_p, A_n)\rightarrow \bigoplus_{\mathfrak{q}|p} H^1(I_{L_{\mathfrak{q}}},
F^-_\mathfrak{q}A_n).$$ The modules $H^1(I_{L_{\mathfrak{q}}},
F^-_\mathfrak{q}A_n)$ and  $H^1(G_{L_{\mathfrak{q}}},
F^-_\mathfrak{q}A_n)$ sit in the following exact sequence
$$\xymatrix{ H^1(G_{L_{\mathfrak{q}}}/I_{L_{\mathfrak{q}}}, (F^-_\mathfrak{q}A_n)^{I_{L_{\mathfrak{q}}}})
\ar[r] & H^1(G_{L_{\mathfrak{q}}} , F^-_\mathfrak{q}A_n) \ar[r] &
H^1(I_{L_{\mathfrak{q}}}, F^-_\mathfrak{q}A_n). }$$ Condition
($\mathrm{PO}$) ensures
$H^1(G_{L_{\mathfrak{q}}}/I_{L_{\mathfrak{q}}},
(F^-_\mathfrak{q}A_n)^{I_{L_{\mathfrak{q}}}})=0$. Thus we have an
inclusion $$ H^1(G_{L_{\mathfrak{q}}} , F^-_\mathfrak{q}A_n)
\hookrightarrow H^1(I_{L_{\mathfrak{q}}}, F^-_\mathfrak{q}A_n). $$
Our lemma follows.
\end{proof}

\begin{lem}\label{lem:control} $($\cite[Proposition 2.10(1)]{Wang} \text{and} \cite[Lemma 3.5 (b)]{Xie}$)$
 Assume  $(\mathrm{CR}^+)$ holds. Let $L/K$ be a finite
extension contained in $\mathcal{K}_\infty$. \begin{enumerate} \item
The restriction maps
$$ \mathrm{Sel} ^S(K, A_{ n})\rightarrow
\mathrm{Sel} ^S(L, A_{ n})^{\mathrm{Gal}(L/K)}
$$ is an isomorphism.
\item
If $S$ contains all primes $\mathfrak{q}|\mathfrak{n}^+$ with
$\bar{\rho}_{f,\mathfrak{q}}$ unramified, then
$$ \mathrm{Sel} ^S(L,A_n)= \mathrm{Sel} ^S(L,A)[\varpi^n] .  $$
\end{enumerate}
\end{lem}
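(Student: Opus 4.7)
The plan is to prove both parts using the standard Greenberg-style control technique, with absolute irreducibility of $\bar{\rho}_f$ on suitable subgroups as the main input.

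For part (a), I would begin with the inflation--restriction exact sequence
\[
0\to H^1(\mathrm{Gal}(L/K), A_n^{G_L})\to H^1(G_K, A_n)\to H^1(G_L, A_n)^{\mathrm{Gal}(L/K)}\to H^2(\mathrm{Gal}(L/K), A_n^{G_L}).
\]
The critical step is to show $A_n^{G_L}=0$. Condition $(\mathrm{CR}^+)$ makes $\bar{\rho}_f|_{G_{F(\sqrt{p^*})}}$ absolutely irreducible. Since $L$ lies in $\mathcal{K}_\infty$, which is an abelian pro-$p$ extension of $K$ with $p$ odd and $F(\sqrt{p^*})$ not contained in $\mathcal{K}_\infty$, the restriction of $\bar{\rho}_f$ to $G_L$ stays absolutely irreducible; in particular $\bar{A}^{G_L}=0$, and by induction via the snake lemma on $0\to A_1\to A_n\to A_{n-1}\to 0$ one gets $A_n^{G_L}=0$. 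The outer terms of the inflation--restriction sequence therefore vanish, giving injectivity on all of $H^1$ and hence on Selmer subgroups. For surjectivity I would lift a Galois-invariant class $s\in\mathrm{Sel}^S(L,A_n)^{\mathrm{Gal}(L/K)}$ to $\tilde s\in H^1(G_K,A_n)$ and check place-by-place that $\tilde s$ satisfies the Selmer conditions: at places in $S$ there is nothing to check; at a place $v\nmid p\mathfrak{n}^-$ not above $S$ the local version of inflation--restriction, combined with $A_n^{I_{L_w}}$ being cohomologically trivial under $\mathrm{Gal}(L_w/K_v^{\mathrm{ur}})$, forces $\res_v(\tilde s)\in H^1_{\mathrm{fin}}$; at a place $v\mid p\mathfrak{n}^-$, the same argument applied to the sub-quotients $F^{\pm}_v A_n$ (using the freeness of $F^\pm_v A_n$ over $\mathcal{O}_n$) puts $\res_v(\tilde s)$ in $H^1_{\mathrm{ord}}$.

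For part (b), I would use the tautological exact sequence $0\to A_n\to A\xrightarrow{\varpi^n} A\to 0$ (since $A_n=A[\varpi^n]$) and pass to $G_L$-cohomology. Combined with $A^{G_L}=0$, this yields an isomorphism $H^1(G_L,A_n)\cong H^1(G_L,A)[\varpi^n]$. I would then verify that the local Selmer conditions match under this identification: at $v\in S$ trivially; at an unramified $v\nmid p\mathfrak{n}^-$, the image of $H^1_{\mathrm{fin}}(L_v,A_n)$ in $H^1(L_v,A)$ lands in $H^1_{\mathrm{fin}}(L_v,A)$ and surjects onto its $\varpi^n$-torsion by the unramified cohomology of a free $\mathcal{O}_n$-module; at $v\mid p\mathfrak{n}^-$, the analogous statement for the ordinary filtration follows from the flatness of $F^+_v A_n\to F^+_v A[\varpi^n]$. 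The hypothesis that $S$ contains all $\mathfrak{q}\mid\mathfrak{n}^+$ with $\bar{\rho}_{f,\mathfrak{q}}$ unramified is precisely what avoids the obstruction at those primes, where the local comparison of $H^1_{\mathrm{fin}}$ for $A_n$ and $A[\varpi^n]$ can fail.

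The main obstacle is the careful local analysis at primes dividing $p\mathfrak{n}^-$. There one must check that $H^i(\mathrm{Gal}(L_w/K_v), F^\pm_v A_n)$ vanish for $i=1,2$ and that the ordinary condition for $A_n$ and $A$ are genuinely compatible; the condition $(\mathrm{PO})$ is used here to ensure $\mathrm{Frob}_v-1$ is invertible on $(F^-_v A_n)^{I}$, cutting off a potentially nonzero contribution, exactly as in the proof of Lemma~\ref{lem:p-ord-part}. At the bad primes above $\mathfrak{n}^+$ the inertia action on $A_n$ need not lift well to $A$ unless $\bar{\rho}_{f,\mathfrak{q}}$ is ramified; this is why enlarging $S$ to contain the unramified-residual $\mathfrak{q}\mid\mathfrak{n}^+$ is imposed in (b), and why no analogous hypothesis is needed in (a).
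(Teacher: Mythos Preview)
Your proposal is correct and outlines precisely the standard control-theorem argument that underlies the cited results. The paper itself does not give an independent proof: it simply observes that \cite[Proposition 2.10(1)]{Wang} and \cite[Lemma 3.5(b)]{Xie} are stated for $L\subset K_\infty$ but that their proofs work verbatim for $L\subset\mathcal{K}_\infty$. What you have written is essentially the content of those cited proofs, so there is no genuine difference in approach---you have just unpacked the citation.

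One minor point: your justification that $\bar{\rho}_f|_{G_L}$ remains absolutely irreducible is slightly imprecise. The relevant fact is not really about whether $F(\sqrt{p^*})$ is contained in $\mathcal{K}_\infty$; rather, since $\mathcal{K}_\infty/K$ is pro-$p$ and $[K:F(\sqrt{p^*})\cap K]\le 2$, the image of $G_L$ in $\mathrm{GL}_2(\bar{\mathbb{F}}_p)$ has $p$-power index (up to a factor of $2$) in the image of $G_{F(\sqrt{p^*})}$, and absolute irreducibility of a mod-$p$ representation is inherited by normal subgroups of $p$-power index. This is the argument actually used in \cite{Wang, Xie}, and the extension from $K_\infty$ to $\mathcal{K}_\infty$ changes nothing since both are pro-$p$ over $K$.
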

\begin{proof} Proposition 2.10(1) in \cite{Wang} and Lemma 3.5 (b) in \cite{Xie}
are stated under the condition that $L/K$ is a finite extension
contained in $K_\infty$. But the proofs work in our situation.
\end{proof}


Let $\mathcal{L}$ be $\mathcal{K}_\infty$, $\mathcal{K}^-_\infty$,
$K^-_J$ or $K_\infty$. Write $\Gamma=\mathrm{Gal}(\mathcal{L}/K)$.
We put
\begin{eqnarray*} && H^1(\mathcal{L}, A_{n}) = \lim_{\overrightarrow{\:\:L\:\:}} H^1
(L, A_{n}), \hskip 10pt
\widehat{H}^1(\mathcal{L}, T_{n}) = \lim_{\overleftarrow{\:\:L\:\:}} H^1 (L, T_{n}) \\
&& H^1(\mathcal{L}_{\mathfrak{l}}, A_{n}) =
\lim_{\overrightarrow{\:\:L\:\:}} H^1 (L_{\mathfrak{l}}, A_{n})
\hskip 5pt \text{ and } \hskip
5pt\widehat{H}^1(\mathcal{L}_{\mathfrak{l}}, T_{n}) =
\lim_{\overleftarrow{\;\; L\;\;}} H^1 (L_{\mathfrak{l}}, T_{n}),
\end{eqnarray*} where $L$ runs through all finite extensions of $K$ contained in $\mathcal{L}$. The finite parts and the singular quotients
$H^1_?(\mathcal{L}_{\mathfrak{l}}, A_{n})$ and
$\widehat{H}^1_{?}(\mathcal{L}_{\mathfrak{l}}, T_{n})$ for $?\in
\{\fin, \sing\}$ are defined similarly. We define
\begin{eqnarray*} && \mathrm{Sel} ^S(\mathcal{L}, A_{ n}) =
\lim_{\overrightarrow{\;\;L\;\;}} \mathrm{Sel} ^S (L, A_{ n}) \hskip
10pt \text{and} \hskip 10pt \widehat{\mathrm{Sel}} ^S(\mathcal{L},
T_{ n}) = \lim_{\overleftarrow{\;\; L \;\;}}\mathrm{Sel} ^S (L, T_{
n}) .
\end{eqnarray*} We define $\mathrm{Sel}^S(\mathcal{L},
A_{\rho})$ similarly. If $S$ is empty, we drop $S$ from the above
notations. For $M=\mathrm{Sel}^S(\mathcal{L}, A_{\rho})$ or
$\mathrm{Sel}^S(\mathcal{L}, A_n)$, we write $M^\vee$ for the
Pontrjagin dual $\mathrm{Hom}_{\BZ_p}(M, \BQ_p/\BZ_p)$ of $M$. Then
$\mathrm{Sel}^S(\mathcal{L}, A_{\rho})^\vee$ and
$\mathrm{Sel}^S(\mathcal{L}, A_n)^\vee$ are
$\mathcal{O}[[\Gamma]]$-modules.

For $\Lambda=\mathcal{O}_\mathfrak{P}[[\Gamma_K]],
\mathcal{O}_\mathfrak{P}[[\Gamma^-_K]],
\mathcal{O}_\mathfrak{P}[[\Gamma^-_J]] $ or
$\mathcal{O}_\mathfrak{P}[[\Gamma^-_{\mathfrak{p}}]]$, and for a
$\Lambda$-module $M$, we write $\mathrm{char}_\Lambda M$ for the
characteristic ideal of $M$. The reader may consult \cite[Section
3.1.5]{S-U} for the definition of characteristic ideals. When
$\Lambda$ is clear to us, we also write $\mathrm{char} M$ for
$\mathrm{char}_\Lambda M$. If $\mathfrak{a}$ is an ideal of
$\Lambda$, we write $M/\mathfrak{a}$ for the
$\Lambda/\mathfrak{a}$-module $M/\mathfrak{a}M$. We write
$\Lambda_{E}$ for
$\Lambda\otimes_{\mathcal{O}_{\mathfrak{P}}}E_{\mathfrak{P}}$.

We have the following direct consequences of Lemma
\ref{lem:control}.

\begin{cor} For $\mathcal{L}$ and $\Gamma=\mathrm{Gal}(\mathcal{L}/K)$ as above, we have
$$  \mathrm{Sel}^S(\mathcal{L}, A_{\rho}) = \mathrm{Sel}^S(\mathcal{K}_\infty, A_{\rho})^\Gamma
$$ and $$  \mathrm{Sel}^S(\mathcal{L}, A_n) = \mathrm{Sel}^S(\mathcal{K}_\infty,
A_n)^\Gamma.
$$
\end{cor}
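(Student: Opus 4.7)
The corollary should follow from Lemma~\ref{lem:control} by passing to direct limits. First, I would upgrade Lemma~\ref{lem:control}(1) from restriction out of $K$ to restriction between arbitrary pairs of finite subextensions of $\mathcal{K}_\infty/K$: the proofs in \cite{Wang,Xie} rely only on vanishing of certain Galois cohomology groups ensured by $(\mathrm{CR}^+)$ and $(\mathrm{PO})$, and these hypotheses persist after any finite base change inside $\mathcal{K}_\infty$ (the remark immediately following the proof of Lemma~\ref{lem:control} already records that the argument is not special to the base field $K$). Thus for every tower $L\subset L'$ of finite subextensions of $\mathcal{K}_\infty/K$, restriction gives an isomorphism
$$\mathrm{Sel}^S(L,A_n)\xrightarrow{\sim}\mathrm{Sel}^S(L',A_n)^{\mathrm{Gal}(L'/L)}.$$

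Next I would pass to direct limits in two stages. Fix a finite $L\subset\mathcal{L}$ and let $L'$ range over the finite subextensions of $\mathcal{K}_\infty/L$. Using that formation of invariants under a profinite group commutes with direct limits of discrete $p$-primary modules (the action factoring through a finite quotient at each stage), the isomorphisms above assemble into
$$\mathrm{Sel}^S(L,A_n)=\mathrm{Sel}^S(\mathcal{K}_\infty,A_n)^{\mathrm{Gal}(\mathcal{K}_\infty/L)}.$$
A further direct limit over $L$ exhausting $\mathcal{L}/K$ then yields the $A_n$-identity of the corollary, with $\Gamma$ acting on $\mathrm{Sel}^S(\mathcal{K}_\infty,A_n)$ through the quotient map $\mathrm{Gal}(\mathcal{K}_\infty/K)\twoheadrightarrow\Gamma$, so that $\Gamma$-invariants are read as invariants under its kernel $\mathrm{Gal}(\mathcal{K}_\infty/\mathcal{L})$. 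The $A_\rho$-version follows by writing $A_\rho=\varinjlim_n A_n$ and commuting the limit through the Selmer construction, since each local condition is already defined at finite level.

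The main technical point is the interchange of the direct limit with both the profinite-group invariants and the local Selmer conditions at primes outside $S$. The former is standard for discrete torsion modules. The latter requires that $H^1_{\mathrm{fin}}$ at tame primes and $H^1_{\mathrm{ord}}$ at primes above $p\mathfrak{n}^-$ be compatible with inflation along the tower; at primes above $p$ this is precisely the content of Lemma~\ref{lem:p-ord-part} (which used $(\mathrm{PO})$), while at tame primes it reduces to the preservation of unramified cohomology under further unramified extensions. With these compatibilities in place, the limit argument is formal, and no new Iwasawa-theoretic input is required.
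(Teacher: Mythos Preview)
Your argument is correct and is exactly what the paper intends: the corollary is stated without proof as a ``direct consequence'' of Lemma~\ref{lem:control}, and your two-step limit argument is the natural way to unpack that phrase.

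A few minor corrections are worth making. First, the parenthetical you invoke (which is the entire proof of Lemma~\ref{lem:control}, not a remark following it) concerns replacing $K_\infty$ by $\mathcal{K}_\infty$ as the ambient tower, not replacing the base $K$ by a larger finite extension $L$; so the upgrade to arbitrary pairs $L\subset L'$ genuinely requires rerunning the inflation--restriction argument of \cite{Wang,Xie} over $L$, as you say, rather than citing that remark. Second, Lemma~\ref{lem:p-ord-part} compares the paper's ordinary condition with Wan's definition, not its compatibility with restriction in a tower; the latter compatibility is immediate from the definition of $H^1_{\mathrm{ord}}$ as the image of $H^1(-,F^+_\mathfrak{l}A)$ and needs no extra input. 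Third, for the $A_\rho$-statement it is cleaner to apply the inflation--restriction argument directly to $A_\rho$ rather than pass through $A_\rho=\varinjlim_n A_n$: the identification $\mathrm{Sel}^S(L,A_\rho)=\varinjlim_n\mathrm{Sel}^S(L,A_n)$ is exactly Lemma~\ref{lem:control}(b), which carries an extra hypothesis on $S$ that the corollary does not assume.
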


Let $\mathfrak{a}_\mathcal{L}$ be the kernel of the natural
homomorphism $$\mathcal{O}_\mathfrak{P}[[\Gamma_{K}]]\rightarrow
\mathcal{O}_\mathfrak{P}[[\mathrm{Gal}(\mathcal{L}/K)]].$$ Let
$\mathfrak{b}_\mathcal{L}$ be the kernel of the natural homomorphism
$$\mathcal{O}_\mathfrak{P}[[\mathrm{Gal}(\mathcal{L}/K)]]\rightarrow
\mathcal{O}_\mathfrak{P}[[\Gamma^-_\mathfrak{p}]].$$

\begin{cor}\label{cor:char-sel} For $\mathcal{L}$ and $\Gamma=\mathrm{Gal}(\mathcal{L}/K)$ as
above, we have
$$  \mathrm{Sel}^S(\mathcal{L}, A_{\rho})^\vee = \mathrm{Sel}^S(\mathcal{K}_\infty,
A_{\rho})^\vee / \mathfrak{a}_\mathcal{L}
$$ and $$  \mathrm{Sel}^S(\mathcal{L}, A_n)^\vee = \mathrm{Sel}^S(\mathcal{K}_\infty,
A_n)^\vee  / \mathfrak{a}_\mathcal{L}.
$$
\end{cor}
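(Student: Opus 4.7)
The plan is to deduce this corollary from the preceding one by applying Pontryagin duality, using the standard identification of invariants with coinvariants under the dual.

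First, I would set up the ideal-theoretic bookkeeping. Write $H=\mathrm{Gal}(\mathcal{K}_\infty/\mathcal{L})$, a closed subgroup of $\Gamma_K$ with quotient $\Gamma_K/H=\mathrm{Gal}(\mathcal{L}/K)$. By definition, $\mathfrak{a}_\mathcal{L}$ is the kernel of the projection $\mathcal{O}_\mathfrak{P}[[\Gamma_K]]\rightarrow\mathcal{O}_\mathfrak{P}[[\Gamma_K/H]]$, which is the closure of the ideal generated by $\{h-1:h\in H\}$. Equivalently, $\mathfrak{a}_\mathcal{L}=I_H\cdot\mathcal{O}_\mathfrak{P}[[\Gamma_K]]$, where $I_H\subset\mathcal{O}_\mathfrak{P}[[H]]$ is the augmentation ideal of the Iwasawa algebra of $H$.

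Next, I would invoke the previous corollary for both $M=A_\rho$ and $M=A_n$ to identify $\mathrm{Sel}^S(\mathcal{L},M)$ with $\mathrm{Sel}^S(\mathcal{K}_\infty,M)^H$ (the $\Gamma$ in the statement is to be read as $H$ acting through $\Gamma_K$, which is the only sensible interpretation since $\mathcal{L}\subset\mathcal{K}_\infty$). The Selmer groups $\mathrm{Sel}^S(\mathcal{K}_\infty,A_\rho)$ and $\mathrm{Sel}^S(\mathcal{K}_\infty,A_n)$ are discrete $p$-primary $\mathcal{O}_\mathfrak{P}[[\Gamma_K]]$-modules, and their $H$-action is continuous.

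Then I would apply the standard Pontryagin-duality fact: for any discrete $p$-primary continuous $H$-module $N$, one has a canonical isomorphism $(N^H)^\vee \cong N^\vee/I_H N^\vee$, i.e.\ invariants dualize to coinvariants. Taking $N$ to be $\mathrm{Sel}^S(\mathcal{K}_\infty,A_\rho)$ (resp.\ $\mathrm{Sel}^S(\mathcal{K}_\infty,A_n)$), one obtains
\[
\mathrm{Sel}^S(\mathcal{L},M)^\vee \;=\; \bigl(\mathrm{Sel}^S(\mathcal{K}_\infty,M)^H\bigr)^\vee \;=\; \mathrm{Sel}^S(\mathcal{K}_\infty,M)^\vee/I_H\cdot \mathrm{Sel}^S(\mathcal{K}_\infty,M)^\vee,
\]
and by the identification in the first step this coincides with $\mathrm{Sel}^S(\mathcal{K}_\infty,M)^\vee/\mathfrak{a}_\mathcal{L}$, which is exactly the stated formula.

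There is no serious obstacle: the content of the corollary lies entirely in the previous one; the remaining step is formal. The only points that deserve a line of verification are (i) that the Selmer groups in question are genuinely discrete $\mathcal{O}_\mathfrak{P}[[\Gamma_K]]$-modules, so that Pontryagin duality sends them to compact $\mathcal{O}_\mathfrak{P}[[\Gamma_K]]$-modules on which the invariants-to-coinvariants formula is $\mathcal{O}_\mathfrak{P}[[\Gamma_K]]$-linear, and (ii) the topological identity $\mathfrak{a}_\mathcal{L}=I_H\cdot\mathcal{O}_\mathfrak{P}[[\Gamma_K]]$, both of which are straightforward.
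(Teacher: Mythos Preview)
Your proposal is correct and matches the paper's approach: the paper states this corollary (together with the preceding one) as a ``direct consequence of Lemma~\ref{lem:control}'' without supplying a separate proof, so your Pontryagin-duality argument simply spells out the implicit step. Your observation that the $\Gamma$ in the preceding corollary must be read as $H=\mathrm{Gal}(\mathcal{K}_\infty/\mathcal{L})$ rather than $\mathrm{Gal}(\mathcal{L}/K)$ is also on point---this appears to be a minor notational slip in the paper.
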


\begin{prop}\label{prop:S-U} For $\mathcal{L}$ as above, we have $$\mathrm{char} \ \mathrm{Sel}
(\mathcal{L}, A_\rho)^\vee \subseteq \mathrm{char}\ \mathrm{Sel}
(\mathcal{K}_\infty, A_\rho)^\vee \ \mathrm{mod}\
\mathfrak{a}_\mathcal{L} $$ and  $$\mathrm{char} \ \mathrm{Sel}
(K_\infty, A_\rho)^\vee \subseteq \mathrm{char}\ \mathrm{Sel}
(\mathcal{L}, A_\rho)^\vee \ \mathrm{mod}\ \mathfrak{b}_\mathcal{L}
.$$
\end{prop}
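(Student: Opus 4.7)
The plan is to derive both containments from a single general property of characteristic ideals under quotienting by an ideal of the Iwasawa algebra. First I would apply Corollary \ref{cor:char-sel} to rewrite the Selmer group over the smaller field as a quotient of that over the larger one. Setting $\Lambda=\mathcal{O}_\mathfrak{P}[[\Gamma_K]]$ and $M=\mathrm{Sel}(\mathcal{K}_\infty,A_\rho)^\vee$, the first containment becomes
\[
\mathrm{char}_{\Lambda/\mathfrak{a}_\mathcal{L}}(M/\mathfrak{a}_\mathcal{L}M)\subseteq \mathrm{char}_\Lambda(M)\bmod \mathfrak{a}_\mathcal{L}.
\]
For the second, two successive applications of Corollary \ref{cor:char-sel} identify $\mathrm{Sel}(K_\infty,A_\rho)^\vee$ with $\mathrm{Sel}(\mathcal{L},A_\rho)^\vee/\mathfrak{b}_\mathcal{L}$ (since the composite kernel $\mathfrak{a}_\mathcal{L}+\tilde{\mathfrak{b}}_\mathcal{L}\subset\Lambda$ equals $\mathfrak{a}_{K_\infty}$), so the assertion becomes the analogous statement with $\Lambda$ replaced by $\Lambda/\mathfrak{a}_\mathcal{L}=\mathcal{O}_\mathfrak{P}[[\mathrm{Gal}(\mathcal{L}/K)]]$, $M$ by $M/\mathfrak{a}_\mathcal{L}M$, and $\mathfrak{a}_\mathcal{L}$ by $\mathfrak{b}_\mathcal{L}$.

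It therefore suffices to establish the following general fact: for any finitely generated torsion module $N$ over an Iwasawa algebra $\Lambda_0$ and any ideal $\mathfrak{c}\subset\Lambda_0$ such that $N/\mathfrak{c}N$ is $\Lambda_0/\mathfrak{c}$-torsion, one has
\[
\mathrm{char}_{\Lambda_0/\mathfrak{c}}(N/\mathfrak{c}N)\subseteq \mathrm{char}_{\Lambda_0}(N)\bmod \mathfrak{c}.
\]
The argument I would use is standard: invoke the structure theorem to obtain a pseudo-isomorphism $N\sim \bigoplus_i \Lambda_0/(g_i)$ with $\prod_i(g_i)=\mathrm{char}_{\Lambda_0}(N)$, tensor the defining four-term exact sequence with $\Lambda_0/\mathfrak{c}$, and chase characteristic ideals through the resulting Tor exact sequence using the multiplicativity of characteristic ideals in short exact sequences of torsion modules. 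For each cyclic piece one has exactly $\mathrm{char}_{\Lambda_0/\mathfrak{c}}(\Lambda_0/(g_i,\mathfrak{c}))=(g_i\bmod \mathfrak{c})$, which accounts for the right-hand side, and the containment $\subseteq$ (rather than equality) absorbs the contribution from the pseudo-null kernel and cokernel. This lemma is essentially \cite[\S3.1.5]{S-U} and reappears in many subsequent papers on Iwasawa theory.

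The main obstacle is purely bookkeeping: one must verify at each step that the relevant quotient $M/\mathfrak{a}M$ remains a torsion module over $\Lambda/\mathfrak{a}$, so that its characteristic ideal is defined. In our setting this follows from Theorem \ref{thm:Iw-main-int} together with the nonvanishing (up to units) of the reductions of $L_p(\mathcal{K}_\infty^-,f)$ to the quotient algebras, which in turn follows from the vanishing of the $\mu$-invariant of the anticyclotomic $p$-adic $L$-function used earlier in the introduction to derive \eqref{eq:main} from \eqref{eq:contain}. Once the torsion hypothesis is confirmed, both containments in the proposition are immediate from the general lemma applied to $M=\mathrm{Sel}(\mathcal{K}_\infty,A_\rho)^\vee$ and $M/\mathfrak{a}_\mathcal{L}M=\mathrm{Sel}(\mathcal{L},A_\rho)^\vee$ respectively.
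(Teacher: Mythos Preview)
Your approach is essentially the paper's: apply Corollary \ref{cor:char-sel} to identify the Selmer group over the smaller tower as a quotient, and then invoke the general behaviour of characteristic ideals under reduction modulo an ideal of the Iwasawa algebra. The paper does exactly this, citing \cite[Corollary 3.8(ii)]{S-U} directly and noting that the rings $\mathcal{O}_\mathfrak{P}[[\Gamma_K]]$ and $\mathcal{O}_\mathfrak{P}[[\mathrm{Gal}(\mathcal{L}/K)]]$ are unique factorization domains (a hypothesis needed for that corollary).

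There is, however, a genuine circularity in your handling of the torsion hypothesis. You appeal to Theorem \ref{thm:Iw-main-int} to guarantee that $M/\mathfrak{a}M$ is torsion, but Theorem \ref{thm:Iw-main-int} is the introduction's statement of Theorem \ref{thm:Iw-main}, whose proof \emph{uses} Proposition \ref{prop:S-U}. The paper sidesteps this entirely: in the conventions of \cite[\S 3.1.5]{S-U} the characteristic ideal of a non-torsion module is the zero ideal, so the containment $\mathrm{char}_{\Lambda/\mathfrak{c}}(N/\mathfrak{c}N)\subseteq \mathrm{char}_\Lambda(N)\bmod\mathfrak{c}$ is vacuous whenever $N/\mathfrak{c}N$ fails to be torsion, and \cite[Corollary 3.8(ii)]{S-U} is stated so as to cover this case. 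Thus no separate torsion verification is needed for the proposition as stated; drop the appeal to Theorem \ref{thm:Iw-main-int} and your argument is fine.
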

\begin{proof} This follows from Corollary \ref{cor:char-sel} and \cite[Corollary
3.8(ii)]{S-U}. Note that
$\mathcal{O}_\mathfrak{P}[[\mathrm{Gal}(\mathcal{L}/K)]]$ and
$\mathcal{O}_\mathfrak{P}[[\Gamma_K]]$  are unique factorization
domains.
\end{proof}

\subsection{Iwasawa main conjecture}

One attaches to $f$ and the extension $\mathcal{K}_\infty$ a
$p$-adic L-function $$(L_p^S(\mathcal{K}_\infty,
f))\in\mathcal{O}_\mathfrak{P}[[\Gamma_K]]$$ (see \cite[Theorem
82]{Wan}). For $\mathcal{L}=\mathcal{K}^-_\infty$, $K^-_J$ or
$K_\infty$, by specializing we has $(L_p^S(\mathcal{L}, f))\in
\mathcal{O}_\mathfrak{P}[[\Gal_{\mathcal{L}/K}]]$. In fact, the
definition of $(L_p^S(\mathcal{K}_\infty,
f))\in\mathcal{O}_\mathfrak{P}[[\Gamma_K]]$ needs a choice of the
period of $f$. As we will only use Wan's $\BQ$-version result and
need the integral version result in \cite{Xie}, we can choose the
period such that $(L_p(K_\infty, f))$ coincides with $p$-adic
$L$-function for the $K_\infty$-extension used in \cite{Xie}.

For two ideals $I$ and $J$ of
$\Lambda=\mathcal{O}_\mathfrak{P}[[\Gamma_K]]$, if $\Lambda_E\cdot I
=\Lambda_E\cdot J$, we write $I\doteq J$.

\begin{thm}\label{thm:wan} $($\cite[Theorem 3]{Wan}$)$ Suppose $[F:\BQ]$ is even. If $S$ is a finite set of finite places of $F$ that contains all primes dividing
$\mathfrak{n}$, then $\mathrm{Sel}^S(\mathcal{K}_\infty, A_{\rho})$
is a cofinitely generated cotorsion
$\mathcal{O}_\mathfrak{P}[[\Gamma_{\mathcal{K}}]]$-module and
\begin{equation}\label{eq:wan-2} \mathrm{char}\ \mathrm{Sel}^S
(\mathcal{K}_\infty, A_{\rho})^\vee \subseteq
(L_p^S(\mathcal{K}_\infty, f)) \end{equation} in
$\mathcal{O}_\mathfrak{P}[[\Gamma_{\mathcal{K}}]]_E$.
\end{thm}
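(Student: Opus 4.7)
The plan is to establish the one-sided divisibility by the Eisenstein congruence (Ribet/Skinner--Urban) method, transplanted to the Hilbert setting. First, I would choose an auxiliary CM quadratic extension $M/F$ disjoint from $\mathcal{K}_\infty$ and a quasi-split unitary group $\mathrm{U}(3,1)_{M/F}$ (or $\mathrm{U}(2,2)$) attached to $M$; the key point is that cuspidal automorphic representations on this unitary group whose Galois representations are congruent to the direct sum of the two-dimensional representation $\rho$ (extended to $G_M$) and a suitable one-dimensional character $\chi$ are precisely what produce Selmer classes for $\rho$ via lattice constructions.

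Next, I would construct a $p$-adic family of Klingen (or Siegel) Eisenstein series on the unitary group whose Fourier--Jacobi coefficients are built from the Eisenstein datum $(f,\chi)$ and whose constant term is essentially $L_p^S(\mathcal{K}_\infty,f)\cdot(\text{auxiliary }L\text{-functions})$. This is the heart of the argument: the Eisenstein series must live in the ordinary Hida family over $\mathcal{O}_\mathfrak{P}[[\Gamma_\mathcal{K}]]$, and its constant term should identify, after stripping auxiliary factors, with $L_p^S(\mathcal{K}_\infty,f)$ up to a unit. This uses the doubling method integral representation together with carefully chosen local sections at $p$, at primes in $\mathfrak{n}^+$, $\mathfrak{n}^-$, and at archimedean places; the assumption $[F:\mathbb{Q}]$ even is needed so that the unitary group is quasi-split at the right archimedean signatures.

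Then I would argue that, because $L_p^S(\mathcal{K}_\infty,f)$ lies in the Eisenstein ideal of the relevant ordinary cuspidal Hecke algebra, there exist cuspidal Hida families congruent to this Eisenstein family modulo $L_p^S(\mathcal{K}_\infty,f)$. Applying Urban's lattice construction to the associated four-dimensional Galois pseudo-character produces, for each height-one prime $P$ dividing $L_p^S(\mathcal{K}_\infty,f)$, nontrivial extensions of $\rho^\vee$ by $\rho$ over $\mathcal{O}_\mathfrak{P}[[\Gamma_\mathcal{K}]]/P$; self-duality of $\rho$ and a local analysis at $p$, $\mathfrak{n}^+$, $\mathfrak{n}^-$, and the ramified primes of $M$ show these extensions land in $\mathrm{Sel}^S(\mathcal{K}_\infty,A_\rho)$. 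Multiplicity and length counting over $P$ then give $\mathrm{ord}_P\,\mathrm{char}\,\mathrm{Sel}^S(\mathcal{K}_\infty,A_\rho)^\vee \ge \mathrm{ord}_P(L_p^S(\mathcal{K}_\infty,f))$, establishing \eqref{eq:wan-2} after checking that the auxiliary $L$-factors introduced by the choice of $\chi$ and $M$ are coprime to $L_p^S(\mathcal{K}_\infty,f)$ or can be absorbed in $\Lambda_E$.

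The hard part, by a wide margin, is the construction and constant-term computation of the $p$-adic Eisenstein family with the correct local behaviour at every bad place, so that the Eisenstein ideal genuinely contains $L_p^S(\mathcal{K}_\infty,f)$; the local sections at primes dividing $\mathfrak{n}^-$ (where $\pi_v$ is special) and at the archimedean places of $F$ require the delicate doubling-method local integrals, and ensuring the family is cuspidal away from the Eisenstein locus relies on Hida theory for ordinary forms on $\mathrm{U}(3,1)$ in the Hilbert setting. The subsequent lattice/congruence argument, while technical, is a fairly direct adaptation of Urban's framework once the Eisenstein input is in hand, and the control lemmas in Section \ref{ss:selmer} (Lemma \ref{lem:control}, Corollary \ref{cor:char-sel}, Proposition \ref{prop:S-U}) allow one to descend from $\mathcal{K}_\infty$ to intermediate fields without loss.
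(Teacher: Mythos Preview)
Your sketch is a reasonable outline of Wan's Eisenstein-congruence argument on unitary groups, but the paper does not prove this statement at all: Theorem~\ref{thm:wan} is simply quoted as \cite[Theorem~3]{Wan} and used as a black box. The authors' entire ``proof'' is the citation itself. So there is nothing to compare at the level of argument---your proposal attempts to reproduce the contents of \cite{Wan}, whereas the paper under review only invokes its conclusion.

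That said, a small correction to your sketch: in Wan's setup the relevant CM field is $K$ itself (the imaginary quadratic extension already fixed in the paper), not an auxiliary $M$ disjoint from $\mathcal{K}_\infty$; the unitary group is $\mathrm{U}(3,1)$ attached to $K/F$, and the Iwasawa variable in $\mathcal{O}_\mathfrak{P}[[\Gamma_K]]$ comes from varying the Hecke character of $K$ in the Eisenstein datum. Also, the direction of the inequality you write at the end is reversed: the Eisenstein-congruence method gives $\mathrm{ord}_P\,\mathrm{char}\,\mathrm{Sel}^S \geq \mathrm{ord}_P(L_p^S)$, i.e.\ $\mathrm{char}\,\mathrm{Sel}^S \subseteq (L_p^S)$, which is exactly \eqref{eq:wan-2}; your verbal description matches this but be careful with the inequality sign when writing it out.
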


\begin{rem}
In the case when $[F:\BQ_p]$ is odd,  \cite[Theorem 3]{Wan} needs
the condition that Ihara Lemma for Shimura curves holds. \end{rem}

\begin{cor} Suppose $[F:\BQ]$ is even. Then $\mathrm{Sel}(\mathcal{K}_\infty, A_\rho)$ is a cofinitely
generated cotorsion $\mathcal{O}_\mathfrak{P}[[\Gamma_{K}]]$-module
and
\begin{equation}\label{eq:wan-2} \mathrm{char}\ \mathrm{Sel}
(\mathcal{K}_\infty, A_{\rho})^\vee \subseteq (L_p
(\mathcal{K}_\infty, f)) \end{equation} in
$\mathcal{O}_\mathfrak{P}[[\Gamma_{K}]]_E$.
\end{cor}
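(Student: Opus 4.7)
The plan is to deduce this corollary directly from Theorem~\ref{thm:wan} by removing the auxiliary set $S$ (which in Wan's statement must contain all primes dividing $\mathfrak{n}$) and tracking the difference between the $S$-Selmer group and the standard Selmer group, as well as the difference between $L_p^S(\mathcal{K}_\infty,f)$ and $L_p(\mathcal{K}_\infty,f)$. Throughout I would take $S$ to be the set of all finite places of $F$ dividing $\mathfrak{n}$; this is disjoint from $\Sigma_p$ by the assumption $(\mathfrak{n},p)=1$, so $S$ is an admissible choice for Theorem~\ref{thm:wan}.

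For the cotorsion assertion, the inclusion $\mathrm{Sel}(\mathcal{K}_\infty,A_\rho)\subseteq\mathrm{Sel}^S(\mathcal{K}_\infty,A_\rho)$ Pontryagin-dualizes to a surjection $\mathrm{Sel}^S(\mathcal{K}_\infty,A_\rho)^\vee\twoheadrightarrow\mathrm{Sel}(\mathcal{K}_\infty,A_\rho)^\vee$, so the latter inherits the finitely generated torsion property over $\mathcal{O}_\mathfrak{P}[[\Gamma_K]]$ from the former. For the characteristic-ideal comparison, I would invoke the standard defining exact sequence
$$ 0\to \mathrm{Sel}(\mathcal{K}_\infty,A_\rho)\to \mathrm{Sel}^S(\mathcal{K}_\infty,A_\rho)\xrightarrow{\bigoplus\mathrm{res}_\mathfrak{l}} \bigoplus_{\mathfrak{l}\in S} H^1(\mathcal{K}_{\infty,\mathfrak{l}},A_\rho)/L_\mathfrak{l}, $$
where $L_\mathfrak{l}$ is the local condition defining the unmodified Selmer group ($H^1_\fin$ for $\mathfrak{l}\mid\mathfrak{n}^+$, $H^1_\ord$ for $\mathfrak{l}\mid\mathfrak{n}^-$). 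Letting $Q_\mathfrak{l}$ denote the image at each $\mathfrak{l}\in S$ and dualizing yields the multiplicative relation
$$ \mathrm{char}\ \mathrm{Sel}^S(\mathcal{K}_\infty,A_\rho)^\vee \ = \ \mathrm{char}\ \mathrm{Sel}(\mathcal{K}_\infty,A_\rho)^\vee \cdot \prod_{\mathfrak{l}\in S}\mathrm{char}\ Q_\mathfrak{l}^\vee $$
inside the unique factorization domain $\mathcal{O}_\mathfrak{P}[[\Gamma_K]]$.

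On the analytic side, the two $p$-adic $L$-functions are related by removal/insertion of Iwasawa-theoretic local Euler factors: $L_p^S(\mathcal{K}_\infty,f)=L_p(\mathcal{K}_\infty,f)\cdot\prod_{\mathfrak{l}\in S}P_\mathfrak{l}$, where $P_\mathfrak{l}\in\mathcal{O}_\mathfrak{P}[[\Gamma_K]]$ is the Iwasawa-theoretic analogue of the local Euler factor at $\mathfrak{l}$. The key step, and main obstacle, is the local matching
$$ \mathrm{char}\ Q_\mathfrak{l}^\vee \ \doteq \ (P_\mathfrak{l}) \quad \text{in } \mathcal{O}_\mathfrak{P}[[\Gamma_K]]_E $$
for each $\mathfrak{l}\in S$. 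This is a standard but case-by-case computation via local Tate duality together with the explicit description of $\rho_f|_{G_{F_\mathfrak{l}}}$ (principal series, Steinberg, or supercuspidal, corresponding to whether $\mathfrak{l}\mid\mathfrak{n}^+$ or $\mathfrak{l}\mid\mathfrak{n}^-$). The identification is only needed after inverting $p$, which matches the fact that the corollary's inclusion lives in $\mathcal{O}_\mathfrak{P}[[\Gamma_K]]_E$. Once the local matching is established, combining Theorem~\ref{thm:wan} with the two displayed identities and cancelling the common factor $\prod_{\mathfrak{l}\in S}P_\mathfrak{l}$ in the UFD $\mathcal{O}_\mathfrak{P}[[\Gamma_K]]_E$ yields $\mathrm{char}\ \mathrm{Sel}(\mathcal{K}_\infty,A_\rho)^\vee \subseteq (L_p(\mathcal{K}_\infty,f))$, as required.
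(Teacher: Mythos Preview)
Your strategy is essentially the paper's, but one step is stated more strongly than you can justify. You define $Q_\mathfrak{l}$ as the \emph{image} of the restriction map (so that the four-term sequence becomes short exact and characteristic ideals multiply), and then assert $\mathrm{char}\,Q_\mathfrak{l}^\vee \doteq (P_\mathfrak{l})$ as a ``local computation via Tate duality''. But $Q_\mathfrak{l}$ is a globally defined object: that equality would amount to surjectivity of $\mathrm{Sel}^S(\mathcal{K}_\infty,A_\rho)\to H^1(\mathcal{K}_{\infty,\mathfrak{l}},A_\rho)/L_\mathfrak{l}$, a nontrivial global statement you have not addressed. Fortunately equality is not needed. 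Since $Q_\mathfrak{l}$ is a submodule of the full local quotient, $\mathrm{char}\,Q_\mathfrak{l}^\vee$ \emph{divides} the characteristic ideal of that quotient, and it is the latter that the local computation identifies with $(P_\mathfrak{l})$; this divisibility already suffices to cancel in $\mathcal{O}_\mathfrak{P}[[\Gamma_K]]_E$ and conclude.

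The paper handles this point differently: it works with Fitting ideals and invokes the inequality $\mathrm{Fitt}(A)\cdot\mathrm{Fitt}(C)\subseteq\mathrm{Fitt}(B)$ valid for any sequence $A\to B\to C\to 0$ (no injectivity on the left required), passing to characteristic ideals only at the end. The paper also takes $S$ to consist just of the primes dividing $\mathfrak{n}^-$, so the local term is simply $\prod_{\mathfrak{l}\mid\mathfrak{n}^-}H^1((\mathcal{K}_\infty)_\mathfrak{l},F^-_\mathfrak{l}A_\rho)$ and its Fitting ideal is computed explicitly in Lemma~\ref{lem:fitt} as $\bigl(L_p^S(\mathcal{K}_\infty,f)/L_p(\mathcal{K}_\infty,f)\bigr)$. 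Your choice of $S=\{\mathfrak{l}\mid\mathfrak{n}\}$ obliges you to carry out the analogous computation at the $\mathfrak{n}^+$-primes as well, which is doable but not as clean.
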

\begin{proof} We take $S$ to be the set of primes dividing $\mathfrak{n}^-$. Then we have an exact sequence
$$ \xymatrix{ 0 \ar[r] & \mathrm{Sel}
(\mathcal{K}_\infty, A_{\rho})\ar[r] & \mathrm{Sel}^S
(\mathcal{K}_\infty, A_{\rho})\ar[r] & \prod_{\mathfrak{l}|
\mathfrak{n}^- } H^1((\mathcal{K}_\infty)_\mathfrak{l},
F^-_\mathfrak{l}A_\rho).}
$$ Being a submodule of a cofinitely
generated cotorsion $\mathcal{O}_\mathfrak{P}[[\Gamma_{K}]]$-module,
$\mathrm{Sel} (\mathcal{K}_\infty, A_{\rho})$ is a cofinitely
generated cotorsion $\mathcal{O}_\mathfrak{P}[[\Gamma_{K}]]$-module
itself.

Applying \cite[Chapter XIX Proposition 2.7]{Lang} we obtain
\begin{equation}\label{eq:fitting-1} \mathrm{Fitt}(\mathrm{Sel}(\mathcal{K}_\infty,
A_{\rho})^\vee) \cdot \mathrm{Fitt}(\prod_{\mathfrak{l}|
\mathfrak{n}^- }H^1((\mathcal{K}_\infty)_\mathfrak{l},
F^-_\mathfrak{l}A_\rho)^\vee) \subseteq \mathrm{Fitt}(\mathrm{Sel}^S
(\mathcal{K}_\infty, A_{\rho})^\vee).  \end{equation} The reader may compare it with
\cite[Corollary 3.12]{S-U}. Here, for a finite generated
$\mathcal{O}_\mathfrak{P}[[\Gamma_{K}]]$-module $M$, we use
$\mathrm{Fitt}(M)$ to denote its Fitting ideal. Combining Lemma \ref{lem:fitt} below and
 Theorem \ref{thm:wan} we obtain
\begin{equation}\label{eq:fitting-2}\mathrm{Fitt}\ \mathrm{Sel}
(\mathcal{K}_\infty, A_{\rho})^\vee \subseteq (L_p
(\mathcal{K}_\infty, f)).\end{equation} As the characteristic ideal is the smallest divisorial ideal containing the Fitting ideal, (\ref{eq:fitting-1}) and (\ref{eq:fitting-2}) yield the conclusion.
\end{proof}

\begin{lem}\label{lem:fitt} Let $S$ be the set of primes dividing $\mathfrak{n}^-$. Then
$$ \mathrm{Fitt}(\prod_{\mathfrak{l}|
\mathfrak{n}^- }H^1((\mathcal{K}_\infty)_\mathfrak{l},
F^-_\mathfrak{l}A_\rho)^\vee) =\left(\frac{L^S_p (\mathcal{K}_\infty,
f)}{L_p (\mathcal{K}_\infty, f)}\right).
$$
\end{lem}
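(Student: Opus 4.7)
The plan is to recognise that both sides factor over the primes \(\mathfrak{l}\mid\mathfrak{n}^-\) and to match local contributions. On the \(L\)-function side, \(L_p^S(\mathcal{K}_\infty,f)\) is constructed (following \cite{Wan}) from a theta element modified by removing the local Euler factors at primes in \(S=\{\mathfrak{l}:\mathfrak{l}\mid\mathfrak{n}^-\}\). For such \(\mathfrak{l}\) the local constituent \(\pi_\mathfrak{l}\) is a special representation with unramified character, so \(\rho_f|_{G_{F_\mathfrak{l}}}\) is reducible with a one-dimensional unramified quotient. After the self-dual twist by \(\epsilon^{(2-k)/2}\), the unramified character \(\eta_\mathfrak{l}\) through which \(G_{F_\mathfrak{l}}\) acts on \(F^-_\mathfrak{l}A_\rho\) is quadratic, so \(\eta_\mathfrak{l}(\mathrm{Frob}_\mathfrak{l})\in\{\pm1\}\). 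Since \(\mathfrak{l}\mid\mathfrak{n}^-\) is inert or ramified in \(K\), there is a unique place \(\mathfrak{L}\) of \(K\) above \(\mathfrak{l}\); write \(\gamma_\mathfrak{L}\in\Gamma_K\) for the image of \(\mathrm{Frob}_\mathfrak{L}\). Comparing interpolation formulas yields
\[
\frac{L_p^S(\mathcal{K}_\infty,f)}{L_p(\mathcal{K}_\infty,f)} \doteq \prod_{\mathfrak{l}\mid\mathfrak{n}^-}\bigl(1-\eta_\mathfrak{l}(\mathrm{Frob}_\mathfrak{L})\gamma_\mathfrak{L}\bigr)
\]
up to a unit of \(\Lambda\).

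Next, fix \(\mathfrak{l}\mid\mathfrak{n}^-\). By Pontrjagin--Tate local duality promoted to the Iwasawa setting,
\[
H^1((\mathcal{K}_\infty)_\mathfrak{l},F^-_\mathfrak{l}A_\rho)^\vee \;\cong\; \widehat{H}^1((\mathcal{K}_\infty)_\mathfrak{l},F^+_\mathfrak{l}T_\rho),
\]
where \(F^+_\mathfrak{l}T_\rho\cong\mathcal{O}_\mathfrak{P}(\eta_\mathfrak{l}\epsilon)\) is free of rank one. Because \(\mathfrak{l}\nmid p\) and \(\mathcal{K}_\infty/K\) is unramified outside \(p\), the decomposition group \(D_\mathfrak{L}\subset\Gamma_K\) of \(\mathfrak{L}\) is topologically generated by \(\gamma_\mathfrak{L}\). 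Shapiro's lemma reduces the computation to the Iwasawa cohomology of an unramified procyclic tower over \(K_\mathfrak{L}\) with rank-one coefficients, and a direct computation level by level (using local Euler--Poincar\'e together with the vanishing of \(H^0\) and \(H^2\) forced by the fact that \(\eta_\mathfrak{l}\epsilon\) is nontrivial on \(D_\mathfrak{L}\), the cyclotomic twist being never killed by a pro-\(p\) extension at a prime away from \(p\)) gives the cyclic description
\[
\widehat{H}^1((\mathcal{K}_\infty)_\mathfrak{l},F^+_\mathfrak{l}T_\rho)\;\cong\;\Lambda/\bigl(1-\eta_\mathfrak{l}(\mathrm{Frob}_\mathfrak{L})\gamma_\mathfrak{L}\bigr)\Lambda.
\]
Hence the local Fitting ideal equals \(\bigl(1-\eta_\mathfrak{l}(\mathrm{Frob}_\mathfrak{L})\gamma_\mathfrak{L}\bigr)\).

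Since the cohomology groups at distinct primes \(\mathfrak{l}\mid\mathfrak{n}^-\) lie in independent summands and each is cyclic, the Fitting ideal of the direct sum is the product of the individual Fitting ideals (cf.\ \cite[Ch.~XIX Prop.~2.7]{Lang}), matching the first step and concluding the proof.

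The main obstacle is the explicit cyclic-module description of \(\widehat{H}^1((\mathcal{K}_\infty)_\mathfrak{l},F^+_\mathfrak{l}T_\rho)\) over \(\Lambda\). Two issues demand care: first, ruling out spurious torsion contributions from \(H^2\) at each finite level, which relies on \(\eta_\mathfrak{l}\epsilon\) remaining nontrivial on the decomposition group; second, pinning down the Frobenius normalisation so that the generator of the cyclic quotient matches exactly the Euler factor removed in defining \(L_p^S\). Both are standard but must be executed carefully to yield an equality (not merely a containment) of ideals as claimed.
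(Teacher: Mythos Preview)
Your strategy---pass to the compact side via local Tate duality and then compute $\widehat{H}^1((\mathcal{K}_\infty)_\mathfrak{l},F^+_\mathfrak{l}T_\rho)$---is the natural dual of what the paper does, which stays on the discrete side and computes the direct limit $H^1((\mathcal{K}_\infty)_\mathfrak{l},F^-_\mathfrak{l}A_\rho)$ before Pontryagin dualising. The route is legitimate, but your execution has a concrete error. You assert that $H^2(L_\mathfrak{L},F^+_\mathfrak{l}T_\rho)=0$ at every finite level, justified by ``the cyclotomic twist never being killed''. This is false: over $K_\mathfrak{L}$ the quadratic character $\eta_\mathfrak{l}$ becomes trivial (this is precisely the paper's assertion that $G_{L_\mathfrak{L}}$ acts trivially on $F^-_\mathfrak{l}A_\rho$), so $F^+_\mathfrak{l}T_\rho|_{G_{L_\mathfrak{L}}}\cong\mathcal{O}_\mathfrak{P}(1)$ and local duality gives $H^2(L_\mathfrak{L},\mathcal{O}_\mathfrak{P}(1))\cong H^0(L_\mathfrak{L},E_\mathfrak{P}/\mathcal{O}_\mathfrak{P})^\vee\cong\mathcal{O}_\mathfrak{P}\neq 0$. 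Your Euler--Poincar\'e step is then incoherent as written (with $H^0=H^2=0$ it would force $H^1=0$), and the generator you write down, $1-\eta_\mathfrak{l}(\mathrm{Frob}_\mathfrak{L})\gamma_\mathfrak{L}$, is missing the factor $\mathrm{N}(\mathfrak{l})^{\pm 1}$ that must appear.

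The paper's discrete-side argument makes the source of this factor transparent. Via inflation--restriction at each finite level one has an unramified piece $H^1(G_{L_\mathfrak{L}}/I_{L_\mathfrak{L}},F^-_\mathfrak{l}A_\rho)\cong F^-_\mathfrak{l}A_\rho$ and an inertial piece. The unramified pieces die in the direct limit because the restriction maps are multiplication by the local degree, a power of $p$ acting on a divisible group. What survives is $H^1(I_{\bar{\mathfrak{l}}},F^-_\mathfrak{l}A_\rho)\cong\Hom_{\mathrm{cont}}(\mathbb{Z}_p(1),F^-_\mathfrak{l}A_\rho)$, and the Tate twist on tame inertia is exactly what produces the $\epsilon^{-1}$-action of the decomposition group and hence the generator $1-\mathrm{N}(\mathfrak{l})^{-1}\mathrm{Frob}_\mathfrak{l}$, matching the removed Euler factor. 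If you insist on the compact side, the correct computation is: in $H^1(K_\mathfrak{L},\Lambda(\eta_\mathfrak{l}\epsilon))$ (Shapiro) the inertial invariants vanish because $\gamma_\mathfrak{L}$ has infinite order in $\Gamma_K$, and the unramified quotient is $\Lambda/(1-\mathrm{N}(\mathfrak{L})\gamma_\mathfrak{L}^{-1})$, which generates the same ideal.
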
 This should be well known.
However, the authors do not have a reference about it.
\begin{proof}
By definition (see Section \ref{ss:selmer}) $H^{1}((\mathcal{K}_{\infty})_{\mathfrak{l}},F_{\mathfrak{l}}^{-}A_{\rho})$ is the inductive limit $\lim\limits_{\overrightarrow{\;\;L\;\;}} H^{1}(L_{\mathfrak{l}},F_{\mathfrak{l}}^{-}A_{\rho}),$ where $L$ runs through all finite extensions of $K$ contained in $\mathcal{K}_\infty$. For each $L$,  $H^{1}(L_{\mathfrak{l}},F_{\mathfrak{l}}^{-}A_{\rho})$ is isomorphic to
$\bigoplus_{\mathfrak{L}|\mathfrak{l}} H^1(G_{L_\mathfrak{L}}, F_{\mathfrak{l}}^{-}A_{\rho})$ where $\mathfrak{L}$ runs over all places of $L$ above $\mathfrak{l}$,
and sits in the following exact sequence
{\small $$ \bigoplus_{\mathfrak{L}|\mathfrak{l}} H^1(G_{L_\mathfrak{L}}/I_{L_\mathfrak{L}}, (F_{\mathfrak{l}}^{-}A_{\rho})^{I_{L_\mathfrak{L}}})\rightarrowtail \bigoplus_{\mathfrak{L}|\mathfrak{l}} H^1(G_{L_\mathfrak{L}}, F_{\mathfrak{l}}^{-}A_{\rho})\twoheadrightarrow \bigoplus_{\mathfrak{L}|\mathfrak{l}} H^1(I_{L_\mathfrak{L}}, F_{\mathfrak{l}}^{-}A_{\rho})^{G_{L_\mathfrak{L}}/I_{L_\mathfrak{L}}} . $$}
As $G_{L_{\mathfrak{L}}}$ acts trivially on $F_\mathfrak{l}^-A_\rho$, we have
$$ H^1(G_{L_\mathfrak{L}}/I_{L_\mathfrak{L}}, (F_{\mathfrak{l}}^{-}A_{\rho})^{I_{L_\mathfrak{L}}}) \cong F_{\mathfrak{l}}^{-}A_{\rho}. $$ If $L'$ is a finite extension of $L$ contained in $\mathcal{K}_\infty$, and $\mathfrak{L}'$ is a place of $L'$ above $\mathfrak{L}$, we have the following commutative diagram
\[ \xymatrix{ H^1(G_{L_\mathfrak{L}}/I_{L_\mathfrak{L}}, (F_{\mathfrak{l}}^{-}A_{\rho})^{I_{L_\mathfrak{L}}})\ar[rr]^{\hskip45pt \simeq}\ar[d] && F_{\mathfrak{l}}^{-}A_{\rho} \ar[d]^{\times[G_{L_\mathfrak{L}}/I_{L_\mathfrak{L}}:G_{L'_{\mathfrak{L}'}}/I_{L'_{\mathfrak{L}'}}]} \\ H^1(G_{L'_{\mathfrak{L}'}}/I_{L'_{\mathfrak{L}'}}, (F_{\mathfrak{l}}^{-}A_{\rho})^{I_{L'_{\mathfrak{L}'}}})\ar[rr]^{\hskip45pt\simeq} && F_{\mathfrak{l}}^{-}A_{\rho},  } \] where the left vertical arrow is the restriction map. Since both $L_{\mathfrak{L}}$ and $L'_{\mathfrak{L}'}$ are unramified over $K_\mathfrak{l}$, $$[G_{L_\mathfrak{L}}/I_{L_\mathfrak{L}}:G_{L'_{\mathfrak{L}'}}/I_{L'_{\mathfrak{L}'}}]
=|\mathrm{Gal}(L'_{\mathfrak{L}'}/L_{\mathfrak{L}})|=[L'_{\mathfrak{L}'}:L_{\mathfrak{L}}]$$ is a power of $p$. Thus taking limit we have $$\lim\limits_{\overrightarrow{\;\;L\;\;}}\bigoplus_{\mathfrak{L}: \text{places of } L \text{ above }\mathfrak{l}} H^1(G_{L_\mathfrak{L}}/I_{L_\mathfrak{L}}, (F_{\mathfrak{l}}^{-}A_{\rho})^{I_{L_\mathfrak{L}}})=0.$$

Hence, we have an isomorphism  $$ H^{1}((\mathcal{K}_{\infty})_{\mathfrak{l}},F_{\mathfrak{l}}^{-}A_{\rho}) \cong \lim\limits_{\overrightarrow{\;\;L\;\;}}\bigoplus_{\mathfrak{L}: \text{places of }  L \text{ above }\mathfrak{l}} H^1(I_{L_\mathfrak{L}}, F_{\mathfrak{l}}^{-}A_{\rho} )^{G_{L_\mathfrak{L}}/I_{L_\mathfrak{L}}}  . $$ As a $\Gal(L/K)$-module, $$\bigoplus_{\mathfrak{L}: \text{places of }  L \text{ above }\mathfrak{l}} H^1(I_{L_\mathfrak{L}}, F_{\mathfrak{l}}^{-}A_{\rho} )^{G_{L_\mathfrak{L}}/I_{L_\mathfrak{L}}}$$ is isomorphic to $$\mathrm{Ind}^{\Gal(L/K)}_{\Gal({L_{\mathfrak{L}}/K_\mathfrak{l}})} H^1(I_{L_\mathfrak{L}}, F_{\mathfrak{l}}^{-}A_{\rho} )^{ G_{L_\mathfrak{L}}/I_{L_\mathfrak{L}} }$$ the $\Gal({L/K})$-module induced from the $\Gal({L_{\mathfrak{L}}/K_\mathfrak{l}})$-module $H^1(I_{L_\mathfrak{L}},  F_{\mathfrak{l}}^{-}A_{\rho} )^{ G_{L_\mathfrak{L}}/I_{L_\mathfrak{L}} } $.
Let $\bar{\mathfrak{l}}$ be a place of $\mathcal{K}_\infty$ above $\mathfrak{l}$. As a  $\Gamma_K=\Gal({\mathcal{K}_\infty /K})$-module, the inductive limit $$\lim\limits_{\overrightarrow{\;\;L\;\;}}\bigoplus_{\mathfrak{L}: \text{places of }  L \text{ above }\mathfrak{l}} H^1(I_{L_\mathfrak{L}}, F_{\mathfrak{l}}^{-}A_{\rho} )^{G_{L_\mathfrak{L}}/I_{L_\mathfrak{L}}}  $$ is isomorphic to $$\mathrm{Ind}^{\Gal({\mathcal{K}_\infty/K})}_{\Gal({(\mathcal{K}_\infty)_{\bar{\mathfrak{l}}}/K_\mathfrak{l}})} H^1(I_{\bar{\mathfrak{l}}},  F_{\mathfrak{l}}^{-}A_{\rho} )^{ G_{(\mathcal{K}_\infty)_{\bar{\mathfrak{l}}}}/I_{\bar{\mathfrak{l}}} }.$$
where $I_{\bar{\mathfrak{l}}}$ is the inertia subgroup of $G_{(\mathcal{K}_\infty)_{\bar{\mathfrak{l}}}}$.

The group $I_{\bar{\mathfrak{l}}}$ has a unique subgroup $J_{\bar{\mathfrak{l}}}$ such that $I_{\bar{\mathfrak{l}}}/J_{\bar{\mathfrak{l}}}\simeq \mathbb{Z}_{p}(1)$ with respect to the action of $\mathrm{Gal}( K_{\mathfrak{l} }^{\mathrm{un}}| K_{\mathfrak{l}} )$. We have the inflation-restriction sequence $$0\longrightarrow H^{1}(I_{\bar{\mathfrak{l}}}/J_{\bar{\mathfrak{l}}},
(F_{\mathfrak{l}}^{-}A_{\rho})^{J_{\bar{\mathfrak{l}}}})\longrightarrow H^{1}(I_{\bar{\mathfrak{l}}},F_{\mathfrak{l}}^{-}A_{\rho})\longrightarrow H^{1}(J_{\bar{\mathfrak{l}}},F_{\mathfrak{l}}^{-}A_{\rho})^{I_{\bar{\mathfrak{l}}}/J_{\bar{\mathfrak{l}}}}.$$ As $J_{\bar{\mathfrak{l}}}$ has profinite order prime to $p$ and acts trivially on $F_{\mathfrak{l}}^{-}A_{\rho}$, we have $$H^{1}(J_{\mathfrak{l}},F_{\mathfrak{l}}^{-}A_{\rho})\cong \mathrm{Hom}(J_{\mathfrak{l}},F_{\mathfrak{l}}^{-}A_{\rho})=0$$ and \begin{eqnarray*}H^1(I_{\bar{\mathfrak{l}}},  F_{\mathfrak{l}}^{-}A_{\rho} )^{ G_{(\mathcal{K}_\infty)_{\bar{\mathfrak{l}}}}/I_{\bar{\mathfrak{l}}} }&=&H^{1}(I_{\bar{\mathfrak{l}}},F_{\mathfrak{l}}^{-}A_{\rho})\cong H^{1}(I_{\bar{\mathfrak{l}}} /J_{\bar{\mathfrak{l}}},
(F_{\mathfrak{l}}^{-}A_{\rho})^{J_{\bar{\mathfrak{l}}}}) \\ &\cong&
\Hom_{\mathrm{cont}}(I_{\mathfrak{l}}/J_{\mathfrak{l}},F_{\mathfrak{l}}^{-}A_{\rho})\cong
\Hom_{\mathrm{cont}}(\mathbb{Z}_p(1),F_{\mathfrak{l}}^{-}A_{\rho})\end{eqnarray*} with respect to the action of $\Gal({ (\mathcal{K}_\infty)_{\bar{\mathfrak{l}}} /K_\mathfrak{l} })$.
 So $\Gal({(\mathcal{K}_\infty)_{\bar{\mathfrak{l}}}/K_\mathfrak{l}})$ acts on $$ H^1(I_{\bar{\mathfrak{l}}},  F_{\mathfrak{l}}^{-}A_{\rho} )^{ G_{(\mathcal{K}_\infty)_{\bar{\mathfrak{l}}}}/I_{\bar{\mathfrak{l}}} }\cong E_{\mathfrak{P}}/\mathcal{O}_\mathfrak{P} $$ via $\epsilon^{-1}$. Hence
  $$ H^1((\mathcal{K}_\infty)_\mathfrak{l},
F^-_\mathfrak{l}A_\rho)^\vee\cong \mathcal{O}_\mathfrak{P}[[\Gamma_K]]/(1-\mathrm{N}(\mathfrak{l})^{-1}\mathrm{Frob}_{\mathfrak{l}}) $$ and $$\mathrm{Fitt}(H^{1}((\mathcal{K}_{\infty})_{\mathfrak{l}},F_{\mathfrak{l}}^{-}A_{\rho})^{\vee})
=(1- \mathrm{N}(\mathfrak{l})^{-1}\mathrm{Frob}_{\mathfrak{l}}),$$ where $\mathrm{Frob}_{\mathfrak{l}}$ denotes the Frobenius element in $\mathrm{Gal}(\mathcal{K}_{\infty}/K)_{\mathfrak{l}}\cong \mathrm{Gal}((\mathcal{K}_{\infty})_{\bar{\mathfrak{l}}}/K_{\mathfrak{l}})$ that satisfies $\mathrm{Frob}_{\mathfrak{l}}(x)\equiv x^{\mathrm{N}(\mathfrak{l})}$ mod $\mathfrak{l}$. One  should note that $\epsilon(\mathrm{Frob}_{\mathfrak{l}})=\mathrm{N}(\mathfrak{l}).$

For each Hecke character $\chi$ of $\mathrm{Gal}(\mathcal{K}_{\infty}/K) $, we have $$L(f,\chi,s)=L^{S}(f,\chi,s)\cdot\prod_{\mathfrak{l}|\mathfrak{n}^{-}}
(1- \chi(\mathfrak{l})\mathrm{N}(\mathfrak{l})^{1-s})^{-1}.$$
By the interpolation formula of $p$-adic $L$-functions, we have $$\frac{L^{S}_{p}(\mathcal{K}_{\infty},f)}{L_{p}(\mathcal{K}_{\infty},f)}
=\prod_{\mathfrak{l}|\mathfrak{n}^{-}}(1-\mathrm{N}(\mathfrak{l})^{-1}\mathrm{Frob}_{\mathfrak{l}}),$$
as desired.
\end{proof}

\begin{thm}\label{thm:xie} $($\cite[Theorem 1.2]{Xie}$)$   $\mathrm{Sel}(K_\infty, A_{\rho})$ is a cofinitely generated cotorsion
$\mathcal{O}_\mathfrak{P}[[\Gamma^-_{\mathfrak{p}}]]$-module and $$
\mathrm{char}\ \mathrm{Sel} (K_\infty, A_{\rho})^\vee \supseteq
(L_p(K_\infty, f)).
$$
\end{thm}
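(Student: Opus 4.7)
The plan is to follow the bipartite-Euler-system strategy of Bertolini--Darmon \cite{BD07} and Chida--Hsieh \cite{CH15}, as extended to Hilbert modular forms by Longo \cite{Longo} and Wang \cite{Wang}, with the refinement of \cite{Xie} that avoids Ihara's lemma. The divisibility is proved one height-one prime at a time, and the cotorsion statement falls out of the same machine after producing a single nontrivial Kolyvagin class. Fix $n\geq 1$ and set up the notion of an \emph{$n$-admissible prime} $\mathfrak{l}$ of $F$: $\mathfrak{l}$ is inert in $K$, coprime to $p\mathfrak{n}$, satisfies $\mathrm{N}(\mathfrak{l})^2\not\equiv 1\pmod{\varpi^n}$, and $\bar{\rho}_f(\mathrm{Frob}_{\mathfrak{l}})$ has eigenvalues whose ratio is $\mathrm{N}(\mathfrak{l})$ modulo $\varpi^n$. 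For such $\mathfrak{l}$ the local cohomology $H^1((K_\infty)_\mathfrak{l}, T_n)$ splits canonically as $H^1_\fin\oplus H^1_\sing$, each summand being free of rank one over $\Lambda_n:=\mathcal{O}_n[[\Gamma^-_\mathfrak{p}]]$. The key input is level raising modulo $\varpi^n$: for any square-free product $S$ of $n$-admissible primes, the Hecke eigensystem of $f$ is realized on the quaternionic automorphic forms attached to the quaternion algebra $B_S$ ramified exactly at the places above $\mathfrak{n}^-\cdot S$ (and at all infinite places in the definite case), whose parity is controlled by $|S|+|\mathfrak{n}^-|+[F:\BQ]\pmod{2}$.

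Second, the two halves of the bipartite system are constructed as follows. For $S$ of parity making $B_S$ definite, one defines the Theta element $\widehat{\Theta}_S\in\Lambda_n$ by applying the construction of Section \ref{sec:b} to the transferred form. For the opposite parity, $B_S$ is indefinite and one instead extracts from the $\varpi^n$-torsion of the $\mathfrak{p}$-adic \'etale cohomology of the associated Shimura curve (with appropriate level structure at $\mathfrak{n}^+$ and $\mathfrak{p}$) a global Galois class $\kappa(S)\in\widehat{\mathrm{Sel}}^{S}(K_\infty, T_n)$. The \emph{explicit reciprocity laws} glue these together up to units in $\mathcal{O}_n^\times$: $\partial_\mathfrak{l}\,\kappa(S\mathfrak{l})\equiv\widehat{\Theta}_S$ in $H^1_\sing((K_\infty)_\mathfrak{l}, T_n)$, and $\mathrm{loc}_\mathfrak{l}\,\kappa(S\mathfrak{l})\equiv\widehat{\Theta}_{S\mathfrak{l}}$ in $H^1_\fin((K_\infty)_\mathfrak{l}, T_n)$. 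The first law rests on the Cerednik--Drinfeld $\mathfrak{l}$-adic uniformization of $X_{B_{S\mathfrak{l}}}$, identifying the component-group part of its N\'eron model with a module of definite quaternionic forms; the second relies on Jacquet--Langlands together with an Eichler--Shimura-type comparison.

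Third, I would execute the Euler-system argument. Fix a height-one prime $P$ of $\Lambda:=\mathcal{O}_\mathfrak{P}[[\Gamma^-_\mathfrak{p}]]$ and set $t=\mathrm{ord}_P(L_p(K_\infty,f))$, which we may assume finite. Choose $n$ large enough that $\varpi^n$ lies deep in $P$ and work modulo $P^{t+1}$. A Chebotarev density argument over the appropriate ring class field (in the style of Vatsal and Chida--Hsieh) produces $n$-admissible primes $\mathfrak{l}_1,\dots,\mathfrak{l}_r$, with $r$ of the required parity, such that the total localization $\mathrm{Sel}(K_\infty,A_n)^\vee\to\bigoplus_i H^1_\fin((K_\infty)_{\mathfrak{l}_i},A_n)^\vee$ is injective modulo $P^t$. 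Global duality (Poitou--Tate), combined with the second reciprocity law, then identifies $\widehat{\Theta}_{\mathfrak{l}_1\cdots\mathfrak{l}_r}$, a unit multiple of a square root of $L_p(K_\infty,f)$ modulo $\varpi^n$, as an annihilator of $\mathrm{Sel}(K_\infty,A_\rho)^\vee$ modulo $P^t$. Squaring the theta element and letting $n\to\infty$ yields $\mathrm{ord}_P(\mathrm{char}\,\mathrm{Sel}(K_\infty,A_\rho)^\vee)\geq\mathrm{ord}_P(L_p(K_\infty,f))$, which is the desired divisibility; the cotorsion statement is the special case $r=1$, where a single admissible $\mathfrak{l}$ already forces $\mathrm{Sel}(K_\infty,A_\rho)^\vee$ to be $\Lambda$-torsion.

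The principal obstacle is the second reciprocity law with integral (not merely $E_\mathfrak{P}$-rational) control of units. In all prior work \cite{BD07, CH15, Longo, Wang} this step requires Ihara's lemma for the relevant Shimura curve, a hypothesis currently unavailable in the Hilbert setting when $[F:\mathbb{Q}]$ is odd; the key technical advance, accomplished in \cite{Xie}, is to replace Ihara's lemma with a finer mod-$\varpi^n$ analysis of the Hecke module structure of the Shimura-curve cohomology, which suffices to preserve the integrality of the reciprocity law and hence of the Euler-system argument.
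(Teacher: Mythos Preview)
The paper does not supply a proof of this theorem at all; it is quoted verbatim from \cite[Theorem 1.2]{Xie} and used as a black box. Your outline is therefore a sketch of that external argument rather than a comparison target, and at the level of strategy it is the right one: the bipartite Euler system of \cite{BD07,CH15}, its Hilbert extensions in \cite{Longo,Wang}, and the Ihara-free refinement of \cite{Xie} are exactly how this divisibility is obtained.

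That said, your third paragraph contains two substantive slips. First, the final inequality is reversed: the containment $\mathrm{char}\,\mathrm{Sel}(K_\infty,A_\rho)^\vee \supseteq (L_p(K_\infty,f))$ means the characteristic ideal \emph{divides} $L_p$, i.e.\ $\mathrm{ord}_P(\mathrm{char}\,\mathrm{Sel}^\vee)\leq\mathrm{ord}_P(L_p)$ at every height-one prime, not $\geq$. This is the Euler-system direction, bounding the Selmer group from above. Second, and more seriously, the element $\widehat{\Theta}_{\mathfrak{l}_1\cdots\mathfrak{l}_r}$ is \emph{not} a unit multiple of a square root of $L_p(K_\infty,f)$; only $\widehat{\Theta}_\emptyset$ has that property. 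The theta elements at raised levels $S\neq\emptyset$ are auxiliary objects whose $P$-adic valuations are related to one another and to lengths of $S$-imprimitive Selmer groups through the reciprocity laws, and the actual argument is an induction comparing $\mathrm{ord}_P(\widehat{\Theta}_S)$ with $\mathrm{length}_P(\mathrm{Sel}^S)^\vee$ as one enlarges $S$ one admissible prime at a time. Simply exhibiting $\widehat{\Theta}_S$ as an annihilator does not by itself recover the bound by $L_p$; you need the full inductive descent back to $S=\emptyset$.
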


\begin{lem}\label{lem:quad-ext} There exists a totally real quadratic extension $F'$ of $F$,
such that $f'$, the base change  of $f$ to $F'$, again satisfies
$(\mathrm{CR}^+)$, $(\mathfrak{n}^+\text{-}\mathrm{DT})$,
$(\mathrm{PO})$ and $(\mathrm{Fuji}1$-$4)$.
\end{lem}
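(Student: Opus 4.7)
The plan is to construct $F'=F(\sqrt{d})$ for a suitable totally positive $d\in F^\times$, using weak approximation and Chebotarev density to prescribe the local behavior of $F'/F$ at a finite list of places. First I sort the hypotheses on $f'$ into those inherited automatically from $f$ and those requiring constraints on $F'$. The numerical portions of $(\mathrm{CR}^+1)$ are inherited since the weight $k$ and residue characteristic $p$ are unchanged and residue fields only grow. Because $\bar{\rho}_{f'}=\bar{\rho}_f|_{G_{F'}}$, the ramification conditions $(\mathrm{CR}^+3)$, $(\mathfrak{n}^+\text{-}\mathrm{DT})$, and the minimality $(\mathrm{Fuji}2)$ of the lift $\rho_{f'}=\rho_f|_{G_{F'}}$ all transfer directly, provided $F'/F$ is unramified at every prime of $F$ dividing $\mathfrak{n}$. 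For $(\mathrm{PO})$, arranging $F'/F$ to split at every $v\mid p$ forces $a_{v'}(f')=a_v(f)$, so the non-congruence mod $p$ for $f$ passes to $f'$.

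Next, the absolute-irreducibility hypotheses $(\mathrm{CR}^+2)$, $(\mathrm{Fuji}4)$, and the large-image clause of $(\mathrm{Fuji}3)$ all require, for $H\in\{F(\xi_p),F(\sqrt{p^*})\}$, that $\bar{\rho}_f|_{G_H}\not\cong\bar{\rho}_f|_{G_H}\otimes\chi_{F'/F}$. Since $\bar{\rho}_f$ has finite image, only finitely many quadratic characters $\chi$ of $G_F$ admit a self-twist relation $\bar{\rho}_f\otimes\chi\cong\bar{\rho}_f$ (and similarly for the restrictions to $G_H$), so this is a finite exclusion condition on the class of $\chi_{F'/F}$ in $H^1(G_F,\mathbb{Z}/2\mathbb{Z})$.

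Condition $(\mathrm{Fuji}1)$ is the most delicate: at a prime $\mathfrak{l}\mid\mathfrak{n}$ with $\bar{\rho}_f|_{I_{F_\mathfrak{l}}}$ absolutely irreducible, Fuji1 for $f'$ requires $\mathrm{N}(\mathfrak{l}')\not\equiv -1\pmod p$ for every place $\mathfrak{l}'\mid\mathfrak{l}$ of $F'$. If $\mathfrak{l}$ splits in $F'/F$, then $\mathrm{N}(\mathfrak{l}')=\mathrm{N}(\mathfrak{l})$ and the condition transfers from $f$; if $\mathfrak{l}$ were inert, $\mathrm{N}(\mathfrak{l}')=\mathrm{N}(\mathfrak{l})^2$ could newly be $\equiv -1\pmod p$. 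I therefore require $F'/F$ to split at every $\mathfrak{l}\mid\mathfrak{n}$, which simultaneously realizes the odd-degree hypothesis that some $\mathfrak{q}\|\mathfrak{n}$ splits in $F'$.

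Assembling these constraints, let $S$ be the finite set of places of $F$ consisting of all archimedean places, all places above $p$, and all primes dividing $\mathfrak{n}$. By weak approximation on $F^\times/(F^\times)^2$, I choose $d\in F^\times$ that is totally positive and a local square at every finite $v\in S$; the resulting $F'$ is totally real and split at $S$, hence unramified at $p$ and at $\mathfrak{n}$. The finitely many self-twist characters lie in a proper subgroup of the quadratic characters of $G_F$ that are trivial at $S$, so by Chebotarev density (prescribing ramification at an auxiliary prime $\mathfrak{q}_0\notin S$) a valid $d$ exists. The main obstacle is joint compatibility of all these constraints, but each is either a nonempty open local condition or a finite global exclusion, so their intersection is nonempty.
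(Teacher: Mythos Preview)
Your proof is correct and follows the same overall plan as the paper: take $F'=F(\sqrt{d})$ with $d$ totally positive, chosen by weak approximation so that $F'/F$ is split at every prime dividing $p\mathfrak{n}$ and ramified at an auxiliary prime $\mathfrak{q}_0$ outside $p\mathfrak{n}$.

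The only notable difference is in how the image-type conditions $(\mathrm{CR}^+2)$, $(\mathrm{Fuji}3)$, $(\mathrm{Fuji}4)$ are handled. You argue via Clifford theory that absolute irreducibility fails upon restriction to $G_{F'H}$ (for $H=F(\xi_p)$ or $F(\sqrt{p^*})$) only when $\chi_{F'/F}|_{G_H}$ is one of finitely many self-twist characters of $\bar\rho_f|_{G_H}$, and then avoid those. The paper instead lets $F_1$ be the fixed field of $\ker\bar\rho_f$, chooses the auxiliary prime $\mathfrak{q}_0$ among the (cofinitely many) primes at which $F_1/F$ is unramified, and observes that since $F'$ is ramified at $\mathfrak{q}_0$ one has $F'\cap F_1=F$; hence $\bar\rho_{f'}$ and $\bar\rho_f$ have \emph{identical} images, and likewise after restricting to $G_{F(\xi_p)}$ or $G_{F(\sqrt{p^*})}$, so all three conditions transfer at once. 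This disjointness argument is shorter and treats $(\mathrm{Fuji}3)$ more cleanly, since $(\mathrm{Fuji}3)$ is a statement about the projective image (and the factoring of the mod $p$ cyclotomic character through it) rather than about irreducibility, and your self-twist formulation does not by itself pin down the projective image. Apart from this packaging, your auxiliary-ramification step is exactly the mechanism the paper uses, so the two proofs coincide.
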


Here, the meaning of our notation $F'$ is different from that in the
introduction. This makes no confusion, because in this section we
will not deal with $F'$ in our introduction.

\begin{proof} The representation $\bar{\rho}_f$ factors through
$\mathrm{Gal}(F_1/F)$ for a finite Galois extension $F_1$ of $F$.
Take a finite place $\mathfrak{q}\nmid p\mathfrak{n}$. For each
$\mathfrak{l}|p\mathfrak{n}$ take a unit $u_\mathfrak{l}$ of
$\mathcal{O}_{F_{\mathfrak{l}}}$ such that the image
$u_\mathfrak{l}$ in $k_{\mathfrak{l}}$ by the natural homomorphism
$$ \mathcal{O}_{F_\mathfrak{l}}\rightarrow
\mathcal{O}_{F_\mathfrak{l}}/\mathfrak{l}\cong k_\mathfrak{l} $$
generates the multiplicative group $k_\mathfrak{l}^\times$. Take a
uniformizing element $\omega_\mathfrak{q}$ of
$\mathcal{O}_{F_{\mathfrak{q}}}$. By the weak approximation theorem
there exists a totally positive element $a$ of $F$ such that
$a_{\mathfrak{l}}-u_{\mathfrak{l}} \in
\mathfrak{l}\mathcal{O}_{F_\mathfrak{l}}$ for each
$\mathfrak{l}|p\mathfrak{n}$, and
$a_\mathfrak{q}-\omega_\mathfrak{q}\in
\mathfrak{q}^2\mathcal{O}_{F_\mathfrak{q}}$. Put $F'=F(\sqrt{a})$.
Then each place dividing $p\mathfrak{n}$ splits in $F'$. As $a$ is
totally positive, $F'$ is totally real. Let $f'$ be the base change
of $f$ to $F'$.

Since $F_1$ is unramified at $\mathfrak{q}$ and $F'$ is ramified at
$\mathfrak{q}$, as extensions of $F$, $F'$ and $F_1$ are disjoint
from each other. As a consequence, $\bar{\rho}_{f'}  $ and
$\bar{\rho}_f  $ have the same image. So we have $(\mathrm{Fuji}3)$
for $f'$. Similarly, $\bar{\rho}_{f} |_{G_{F(\xi_p)}}$ and
$\bar{\rho}_{f'} |_{G_{F'(\xi_p)}}$ have the same image, so
$(\mathrm{Fuji}4)$ holds for $ {f'}$. By the same reason we have
$(\mathrm{CR}^+$$2)$ for $ {f'}$.

Put $K'=KF'$. Let $\mathfrak{n}'$ be the conductor of $f'$. With
$K'$ in place of $K$, we have
$\mathfrak{n}'=\mathfrak{n}'^+\mathfrak{n}'^-$, where
\(\mathfrak{n}'^{+}\) (resp. \(\mathfrak{n}'^{-}\)) is only
divisible by primes that are split (resp. inert or ramified) in
\(K'\). As each place dividing $\mathfrak{n}$ splits in $F'$, we
have $\mathfrak{n}'^+=\mathfrak{n}^+\mathcal{O}_{F'}$ and
$\mathfrak{n}'^-=\mathfrak{n}^-\mathcal{O}_{F'}$. Now it is easy to
check that $f'$ satisfies the remaining conditions.
\end{proof}

\begin{thm}\label{thm:Iw-main} Assume $f$ satisfies
$(\mathrm{CR}^+)$, $(\mathfrak{n}^+\text{-}\mathrm{DT})$,
$(\mathrm{PO})$ and $(\mathrm{Fuji}1$-$4)$.
\begin{enumerate}
\item\label{it:Iw-main-a}
We have \begin{equation}\label{eq:char} \mathrm{char}\ \mathrm{Sel}
(K_\infty, A_{\rho})^\vee = (L_p(K_\infty, f)). \end{equation}
\item\label{it:Iw-main-b} Let $\mathcal{L}$ be $\mathcal{K}_\infty,
\mathcal{K}^-_\infty$ or $K^-_J$. If $[F:\BQ]$ is even, then
$$ \mathrm{char}\ \mathrm{Sel}(\mathcal{L}, A_\rho)^\vee = (L_p(\mathcal{L}, f))
$$ in $\mathcal{O}_\mathfrak{P}[[\mathrm{Gal}(\mathcal{L}/K)]]_E$.
\end{enumerate}
\end{thm}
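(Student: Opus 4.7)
The plan is to combine the Eisenstein-congruence upper bound of Theorem \ref{thm:wan} with the Euler-system lower bound of Theorem \ref{thm:xie}, using the specialization machinery of Proposition \ref{prop:S-U} and Corollary \ref{cor:char-sel} to propagate bounds through the tower, Lemma \ref{lem:quad-ext} as a base-change device to reduce the $[F:\BQ]$ odd case to the even case, and the vanishing of the $\mu$-invariant of $L_p(K_\infty, f)$ to close the residual gap between $\Lambda_E$- and $\Lambda$-level statements.

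For part (\ref{it:Iw-main-a}) when $[F:\BQ]$ is even, Theorem \ref{thm:wan}'s corollary yields $\mathrm{char}\,\mathrm{Sel}(\mathcal{K}_\infty, A_\rho)^\vee \subseteq (L_p(\mathcal{K}_\infty, f))$ in $\mathcal{O}_\mathfrak{P}[[\Gamma_K]]_E$. Specializing via Proposition \ref{prop:S-U}'s second inclusion with $\mathcal{L} = \mathcal{K}_\infty$, and using that the projection of $L_p(\mathcal{K}_\infty, f)$ to $\mathcal{O}_\mathfrak{P}[[\Gamma^-_\mathfrak{p}]]$ is $L_p(K_\infty, f)$, one obtains
$$\mathrm{char}\,\mathrm{Sel}(K_\infty, A_\rho)^\vee \subseteq (L_p(K_\infty, f))$$
in $\mathcal{O}_\mathfrak{P}[[\Gamma^-_\mathfrak{p}]]_E$. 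Pairing this with the reverse integral containment from Theorem \ref{thm:xie} yields equality in $\mathcal{O}_\mathfrak{P}[[\Gamma^-_\mathfrak{p}]]_E$, and invoking the vanishing of the $\mu$-invariant of $L_p(K_\infty, f)$ then upgrades the equality to one of ideals in $\mathcal{O}_\mathfrak{P}[[\Gamma^-_\mathfrak{p}]]$.

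For part (\ref{it:Iw-main-a}) when $[F:\BQ]$ is odd, I apply Lemma \ref{lem:quad-ext} to produce a real quadratic extension $F'/F$ such that $f'$ and $f\otimes\chi_{F'/F}$ both satisfy the full list of hypotheses; set $K' = KF'$. Since $[F':\BQ]$ is even, the previous paragraph applied to $f'$ over $K'$ yields $\mathrm{char}\,\mathrm{Sel}(K'K_\infty, A_\rho)^\vee = (L_p(K'K_\infty, f'))$. The splittings
$$\mathrm{Sel}(K'K_\infty, A_\rho) = \mathrm{Sel}(K_\infty, A_\rho) \oplus \mathrm{Sel}(K_\infty, A_{\rho\otimes\chi_{F'/F}})$$
and $L_p(K'K_\infty, f') = L_p(K_\infty, f)\cdot L_p(K_\infty, f\otimes\chi_{F'/F})$, together with Theorem \ref{thm:xie} applied separately to $f$ and to $f\otimes\chi_{F'/F}$, reduce the problem to a factor-wise equality, which is then forced by unique factorization in the one-variable Iwasawa algebra $\mathcal{O}_\mathfrak{P}[[\Gamma^-_\mathfrak{p}]]$.

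Part (\ref{it:Iw-main-b}) (assuming $[F:\BQ]$ even, with equality sought only in $\Lambda_E$) follows the same template: the $\subseteq$ direction descends from Theorem \ref{thm:wan} via Corollary \ref{cor:char-sel} and Proposition \ref{prop:S-U}'s first inclusion together with the projection behavior of $L_p$; the $\supseteq$ direction is obtained by propagating part (\ref{it:Iw-main-a}) upward via Proposition \ref{prop:S-U}'s second inclusion, then using that $\mathfrak{b}_\mathcal{L}$ lies in the maximal ideal of the complete local ring $\mathcal{O}_\mathfrak{P}[[\mathrm{Gal}(\mathcal{L}/K)]]$ to promote the resulting divisibility modulo $\mathfrak{b}_\mathcal{L}$ to one in $\Lambda_E$. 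The main obstacle is precisely this bridge between $\Lambda_E$-level statements (the natural output of Theorem \ref{thm:wan}'s Eisenstein-congruence method) and $\Lambda$-level ideal equalities required by part (\ref{it:Iw-main-a}); this gap is closed by the vanishing of the $\mu$-invariant of $L_p(K_\infty, f)$, which under our hypotheses follows from the mod $p$ non-vanishing of theta elements, while a secondary subtlety in the odd-degree case is verifying that $f'$ and $f\otimes\chi_{F'/F}$ both inherit $(\mathrm{CR}^+)$, $(\mathfrak{n}^+\text{-}\mathrm{DT})$, $(\mathrm{PO})$, and $(\mathrm{Fuji}1\text{-}4)$, as furnished by the explicit construction in Lemma \ref{lem:quad-ext}.
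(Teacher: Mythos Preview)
Your overall architecture matches the paper's: combine Wan's $\subseteq$ (Theorem~\ref{thm:wan}) with Xie's $\supseteq$ (Theorem~\ref{thm:xie}), use Proposition~\ref{prop:S-U} to move between levels of the tower, invoke Lemma~\ref{lem:quad-ext} to force even degree, and close the gap between $\Lambda_E$ and $\Lambda$ via the vanishing of $\mu(L_p(K_\infty,f))$. Your treatment of the even-$[F:\BQ]$ case is a legitimate simplification of the paper's argument (the paper passes to $F'$ uniformly, whereas you apply Wan directly when it is available).

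There is, however, a genuine gap in your odd-$[F:\BQ]$ case. You assert that ``the previous paragraph applied to $f'$ over $K'$ yields $\mathrm{char}\,\mathrm{Sel}(K'K_\infty,A_\rho)^\vee=(L_p(K'K_\infty,f'))$.'' But your Case~1 produces the equality for the $\{\mathfrak{p}'\}$-anticyclotomic extension $K'_\infty=K'^{\,-}_{\{\mathfrak{p}'\},\infty}$, and this is \emph{not} $K'K_\infty$. In the construction of Lemma~\ref{lem:quad-ext} every place above $p$ splits in $F'$; writing $\mathfrak{p}\mathcal{O}_{F'}=\mathfrak{p}'\bar{\mathfrak{p}'}$, the extension $K'K_\infty/K'$ is ramified at the places of $K'$ above $\bar{\mathfrak{p}'}$ as well as above $\mathfrak{p}'$, so $K'K_\infty\not\subseteq K'_\infty$. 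You therefore cannot import the equality from $K'_\infty$, and Theorem~\ref{thm:xie} is not available at the $K'K_\infty$ level either.

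The fix, which is exactly what the paper does, is to use only the $\subseteq$ direction at an intermediate level. Apply Theorem~\ref{thm:wan} to $f'$ over $\mathcal{K}'_\infty$ (valid since $[F':\BQ]$ is even), specialize via Proposition~\ref{prop:S-U} to $K'\mathcal{K}_\infty\subseteq\mathcal{K}'_\infty$, decompose both Selmer and $L_p$ along $\chi_{F'/F}$ to get a product containment over $\mathcal{K}_\infty$, and only then specialize to $K_\infty$. At this final level Theorem~\ref{thm:xie} for $f$ and for $f\otimes\chi_{F'/F}$ gives the reverse containments, and the unique-factorization argument you describe forces factor-wise equality in $\Lambda_E$; the $\mu$-invariant step then completes part~(\ref{it:Iw-main-a}). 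For part~(\ref{it:Iw-main-b}), your sketch is correct in outline, but the step ``$\mathfrak{b}_\mathcal{L}$ lies in the maximal ideal, so promote modulo $\mathfrak{b}_\mathcal{L}$ to $\Lambda_E$'' needs care since $\Lambda_E$ is not local; the paper handles this by invoking \cite[Lemma~3.2]{S-U}.
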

\begin{proof} Let $F'$ be a totally real quadratic extension of $F$, as in Lemma \ref{lem:quad-ext}. Then $[F':\BQ]$ is even.
Let $\chi_{F'/F}$ be the quadratic character corresponding to the
extension $F'/F$. Put $K'=F'K$. Let $\mathcal{K}'^-_\infty$ be the
maximal abelian anticyclotomic $\BZ_p$-extension of $K'$ unramified
outside $p$. Put $\mathcal{K}'^+_\infty=F_\infty K'$,
$\mathcal{K}'_\infty=\mathcal{K}'^+_\infty\mathcal{K}'^-_\infty$ and
$\Gamma_{K'}=\mathrm{Gal}(\mathcal{K}'_\infty/K')$. Then
$K'\mathcal{K}_\infty=F'\mathcal{K}_\infty$ is contained in
$\mathcal{K}'_\infty$.

Let $f'$ be the base change of $f$ to $F'$. Let $\rho'$ be the
Galois representation attached to $f'$ and we have
$A_{\rho'}=A_{\rho}|_{G_{F'}}$ according to base change. By Theorem
\ref{thm:wan},
$$ \mathrm{char}\ \mathrm{Sel}
(\mathcal{K}'_\infty, A_{\rho'})^\vee \subseteq
(L_p(\mathcal{K}'_\infty, f'))
$$in $\mathcal{O}_\mathfrak{P}[[\Gamma_{K'}]]_E$. So by Proposition \ref{prop:S-U}, $$ \mathrm{char}\ \mathrm{Sel} (K'\mathcal{K}_\infty,
A_{\rho'})^\vee \subseteq (L_p(K'\mathcal{K}_\infty, f')).
$$ in $\mathcal{O}_\mathfrak{P}[[\mathrm{Gal}(K'\mathcal{K}_\infty/K')]]_E\cong \mathcal{O}_\mathfrak{P}[[\Gamma_{K}]]_E$. But we have $$ \mathrm{Sel} (K'\mathcal{K}_\infty,
A_{\rho'})=\mathrm{Sel} (\mathcal{K}_\infty, A_\rho)\bigoplus
\mathrm{Sel} (\mathcal{K}_\infty, A_{\rho\otimes \chi_{F'/F}}) $$
and
$$ L_p(K'\mathcal{K}_\infty, f')\doteq L_p(\mathcal{K}_\infty,
f)L_p(\mathcal{K}_\infty, f\otimes \chi_{F'/F}).$$ Then $$
\mathrm{char}\ \mathrm{Sel} (\mathcal{K}_\infty, A_{\rho})^\vee\cdot
\mathrm{char}\ \mathrm{Sel} (\mathcal{K}_\infty, A_{\rho\otimes
\chi_{F'/F}})^\vee \subseteq (L_p(\mathcal{K}_\infty,
f)L_p(\mathcal{K}_\infty, f\otimes \chi_{F'/F}) )
$$ in $\mathcal{O}_\mathfrak{P} [[\Gamma_K]]_E
$. Hence, by Proposition \ref{prop:S-U},
\begin{equation}\label{eq:wan-111} \mathrm{char}\ \mathrm{Sel}
(K_\infty, A_\rho)^\vee\cdot \mathrm{char}\ \mathrm{Sel} (K_\infty,
A_{\rho\otimes \chi_{F'/F}})^\vee \subseteq (L_p(K_\infty,
f)L_p(K_\infty, f\otimes \chi_{F'/F}) ) \end{equation} in
$\mathcal{O}_\mathfrak{P}[[\Gamma^-_{\mathfrak{p}}]]_E$.

By Theorem \ref{thm:xie}
$$ \mathrm{char}\ \mathrm{Sel} (K_\infty, A_{\rho})^\vee  \supseteq
(L_p(K_\infty, f)).
$$ Note that $\rho_{f\otimes
\chi_{F'/F}}$ again satisfies $(\mathrm{CR}^+)$,
$(\mathfrak{n}^+\text{-}\mathrm{DT})$, $(\mathrm{PO})$ and
$(\mathrm{Fuji}1$-$4)$. By Theorem \ref{thm:xie} we also have $$
\mathrm{char}\ \mathrm{Sel} (K_\infty, A_{\rho\otimes
\chi_{F'/F}})^\vee \supseteq (L_p(K_\infty, f\otimes \chi_{F'/F}) )
.$$ In particular, these relations hold in
$\mathcal{O}_\mathfrak{P}[[\Gamma^-_{\mathfrak{p}}]]_E$. If either
$$ \mathrm{char}\ \mathrm{Sel} (K_\infty, A_{\rho})^\vee \supsetneq
(L_p(K_\infty, f))  $$ or  $$  \mathrm{char}\ \mathrm{Sel}
(K_\infty, A_{\rho\otimes \chi_{F'/F}})^\vee \supsetneq (
L_p(K_\infty, f\otimes \chi_{F'/F}) )  $$ in
$\mathcal{O}_\mathfrak{P}[[\Gamma^-_\mathfrak{p}]]_E$, then we have
$$ \mathrm{char}\ \mathrm{Sel} (K_\infty, A_{\rho})^\vee\cdot \mathrm{char}\
\mathrm{Sel} (K_\infty, A_{\rho\otimes \chi_{F'/F}})^\vee \supsetneq
(L_p(K_\infty, f)L_p(K_\infty, f\otimes \chi_{F'/F}) )
$$ in $\mathcal{O}_\mathfrak{P}[[\Gamma^-_\mathfrak{p}]]_E$, which contradicts (\ref{eq:wan-111}). Hence (\ref{eq:char})
holds in $\mathcal{O}_\mathfrak{P}[[\Gamma^-_\mathfrak{p}]]_E$.
According to \cite[Theorem 6.9]{Hung}, the (analytic)
$\mu$-invariant of $L_p(K_\infty, f)$ is zero. So by Theorem
\ref{thm:xie}, (\ref{eq:char}) already holds in
$\mathcal{O}_\mathfrak{P}[[\Gamma^-_\mathfrak{p}]]$. This proves
(\ref{it:Iw-main-a}).

For (\ref{it:Iw-main-b}) without loss of generality we may assume
$\mathfrak{p}\in J$. By Proposition \ref{prop:S-U} and Theorem
\ref{thm:wan} we obtain $$  \mathrm{char}\ \mathrm{Sel}
(\mathcal{L}, A_{\rho})^\vee \subseteq \mathrm{char}\ \mathrm{Sel}
(\mathcal{K}_\infty, A_{\rho})^\vee \mathrm{mod} \
\mathfrak{a}_\mathcal{L}\subseteq (L_p(\mathcal{L}, f))$$ in
$\mathcal{O}_\mathfrak{P}[[\mathrm{Gal}(\mathcal{L}/K)]]_E$.  By
(\ref{it:Iw-main-a}) and Proposition \ref{prop:S-U} we have
$$\mathrm{char}\ \mathrm{Sel}(\mathcal{L}, A_\rho)^\vee \
\mathrm{mod} \ \mathfrak{b}_\mathcal{L} = (L_p(\mathcal{L}, f)) \
\mathrm{mod} \ \mathfrak{b}_\mathcal{L} = (L_p(K_\infty, f))
$$  in $\mathcal{O}_\mathfrak{P}[[\Gamma^-_\mathfrak{p}]]_E$. Now
(\ref{it:Iw-main-b}) follows from \cite[Lemma 3.2]{S-U}.
\end{proof}

\section{Period and $\mu$-invariant}\label{sec:e}

Theorem \ref{thm:main} is a consequence of the following theorem.

\begin{thm}\label{thm:main-stronger} Let $F$ be a totally real field,
and $F'$ be a real quadratic extension of $F$. Assume that $p$ is
unramified in both $F$ and $F'$. Let $f$ be a Hilbert newform of
parallel even weight $k$ over $F$ with trivial central character and
level $\mathfrak{n}_f$. Let $f'$ be the base change of $f$ to $F'$.
Let $\mathfrak{n}_{f'}$ be the level of $f'$.

We assume that $\mathfrak{n}_f$ is prime to $p$ and $f$ is ordinary
at each place above $p$. When $[F:\mathbb{Q}]$ is odd, we assume
that there exists at least one prime $\mathfrak{q}$ such that
$\mathfrak{q}|| \mathfrak{n}_f$, $\mathfrak{q}$ is split in $F'$,
and $\bar{\rho}_{f}$ is ramified at $\mathfrak{q}$. Suppose that, if
$\mathfrak{l}'||\mathfrak{n}_{f'}$ and
$\mathrm{N}(\mathfrak{l}')\equiv 1 \ (\mathrm{mod} \ p)$, then
$\bar{\rho}_{f'}$ is ramified at $\mathfrak{l}'$.

Further suppose that $(\mathrm{CR}^+$$1,2,4)$, $(\mathrm{PO})$ and
$(\mathrm{Fuji1}$-$4)$ hold for $f$ and $f'$.Then $$
\Omega_{f'}^\mathrm{can}=(\Omega_f^\mathrm{can})^2 $$ up to a
$p$-adic unit.
\end{thm}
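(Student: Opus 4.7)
The plan is to reduce the comparison of canonical periods to an equality of $p$-adic $L$-functions, and then prove that equality via Iwasawa main conjecture (Theorem \ref{thm:Iw-main}). First I would choose an imaginary quadratic extension $K$ of $F$ adapted to our setup: when $[F:\mathbb{Q}]$ is odd, $K$ should be inert at the split prime $\mathfrak{q}||\mathfrak{n}_f$ (so $[F:\mathbb{Q}]$ odd forces $\mathfrak{n}^-$ to have an odd number of prime factors as required in Section \ref{sec:a}); when $[F:\mathbb{Q}]$ is even, any convenient $K$ works. I would further arrange $K$ so that $\chi_{F'/F}$ is ``anticyclotomic'' with respect to $K$, i.e.\ $F'K = K'$ is a CM field and the decomposition $\mathrm{Sel}(K'K_\infty, A_{\rho}) = \mathrm{Sel}(K_\infty, A_{\rho}) \oplus \mathrm{Sel}(K_\infty, A_{\rho\otimes\chi_{F'/F}})$ is available. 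It must also be verified that $f$, $f\otimes\chi_{F'/F}$, and $f'$ all satisfy $(\mathrm{CR}^+)$, $(\mathfrak{n}^+\text{-}\mathrm{DT})$, $(\mathrm{PO})$, and $(\mathrm{Fuji}1$-$4)$; for the base change $f'$ this is ensured by the arithmetic assumptions of the theorem, and for the twist by $\chi_{F'/F}$ the residual representation is unchanged up to a quadratic twist that does not affect the relevant hypotheses.

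Once $K$ is fixed, the bulk of the argument is exactly the outline from the introduction. By Theorem \ref{thm:Iw-main}(\ref{it:Iw-main-a}) applied to $f$ and to $f\otimes\chi_{F'/F}$ one has
\begin{equation*}
(L_p(K_\infty, f)) = \mathrm{char}\ \mathrm{Sel}(K_\infty, A_\rho)^\vee, \qquad (L_p(K_\infty, f\otimes\chi_{F'/F})) = \mathrm{char}\ \mathrm{Sel}(K_\infty, A_{\rho\otimes\chi_{F'/F}})^\vee,
\end{equation*}
and combining with the Selmer decomposition yields
$$(L_p(K_\infty, f)\cdot L_p(K_\infty, f\otimes\chi_{F'/F})) = \mathrm{char}\ \mathrm{Sel}(K'K_\infty, A_\rho)^\vee.$$
Applying Theorem \ref{thm:Iw-main}(\ref{it:Iw-main-a}) to $f'$ with $K'$ in place of $K$, together with Proposition \ref{prop:S-U} to pass from $K'_\infty$ to $K'K_\infty$, gives
$$(L_p(K'K_\infty, f')) \supseteq \mathrm{char}\ \mathrm{Sel}(K'K_\infty, A_\rho)^\vee = (L_p(K_\infty, f)\cdot L_p(K_\infty, f\otimes\chi_{F'/F})).$$

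To upgrade this divisibility to the desired identity (\ref{eq:main}) and hence to the period equality, I would combine two ingredients. First, the analytic interpolation formula for the Theta-element $p$-adic $L$-functions of Section \ref{sec:b} shows that, after extending scalars to $E_\mathfrak{P}$, the two sides of (\ref{eq:main}) interpolate the same collection of algebraic $L$-values (a factorization of $L(f',\chi,s)$ as $L(f,\chi|_{\mathbf{A}_K^\times},s)\cdot L(f\otimes\chi_{F'/F},\chi|_{\mathbf{A}_K^\times},s)$); so (\ref{eq:main}) holds in $\mathcal{O}_\mathfrak{P}[[\mathrm{Gal}(K_\infty/K)]][\frac{1}{p}]$. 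Second, by \cite[Theorem 6.9]{Hung} the $\mu$-invariants of $L_p(K_\infty, f)$ and $L_p(K_\infty, f\otimes\chi_{F'/F})$ both vanish, so the containment (\ref{eq:contain}) in the integral ring $\mathcal{O}_\mathfrak{P}[[\mathrm{Gal}(K_\infty/K)]]$, together with equality after inverting $p$, forces equality integrally. At this point the ratio of periods $\Omega_{f'}^{\mathrm{can}}/(\Omega_f^{\mathrm{can}})^2$ must be a $p$-adic unit, since changing either period by a non-unit would destroy the integral equality of the normalized $p$-adic $L$-functions.

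The main obstacle I foresee is the choice of $K$: one needs $K$ simultaneously to realize $\chi_{F'/F}$ as an anticyclotomic character of $K$ (so that the Selmer decomposition and the factorization of complex $L$-functions both apply), to force $\mathfrak{n}^-$ to have the correct parity in the odd-degree case, and to preserve the list $(\mathrm{CR}^+)$, $(\mathfrak{n}^+\text{-}\mathrm{DT})$, $(\mathrm{PO})$, $(\mathrm{Fuji}1$-$4)$ for all three forms $f$, $f\otimes\chi_{F'/F}$, and $f'$. Producing such a $K$ by a suitable application of weak approximation and Chebotarev (imposing split/inert conditions at the primes dividing $p\mathfrak{n}_f\mathfrak{n}_{f'}$ and at $\mathfrak{q}$) is where the bookkeeping is most delicate; everything else is essentially assembly of the results already in Sections \ref{sec:a}-\ref{sec:d}.
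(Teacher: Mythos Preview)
Your strategy is the paper's: choose $K$, prove the ideal identity $(L_p(K'K_\infty,f'))=(L_p(K_\infty,f))(L_p(K_\infty,f\otimes\chi_{F'/F}))$ via Theorem~\ref{thm:Iw-main} together with Hung's $\mu=0$, and read off the period relation. Two points need repair. First, the $p$-adic $L$-function identity only yields $\Omega_{f'}^{\mathrm{can}}\sim\Omega_f^{\mathrm{can}}\cdot\Omega_{f\otimes\chi_{F'/F}}^{\mathrm{can}}$, not $(\Omega_f^{\mathrm{can}})^2$; the paper closes this by quoting \cite{D2} to get $\Omega_{f\otimes\chi_{F'/F}}^{\mathrm{can}}=\Omega_f^{\mathrm{can}}$ (congruence number and Petersson norm are unchanged by a character twist). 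You skip this step entirely. Second, your sentence ``changing either period by a non-unit would destroy the integral equality'' is not the mechanism: the Theta-element $p$-adic $L$-functions are defined without reference to any period, and the periods appear only in the interpolation formula \cite[Theorem~5.6]{Hung}. The paper instead evaluates both sides at a single character $\widehat{\nu}$ chosen so that $\nu(\mathfrak{N}^+)$ and the local factors $e_{\mathfrak{p}}(\,\cdot\,,\nu)$ are $p$-adic units (treating the cases $\mathfrak{p}$ split or inert in $F'$ separately), and then the factorization $L(\tfrac12,\pi_{K'}\otimes\nu')=L(\tfrac12,\pi_K\otimes\nu)\,L(\tfrac12,\pi_K\otimes\nu\otimes\chi_{F'/F})$ together with the ideal identity forces the period ratio to be a unit.

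Your worry about the choice of $K$ is overblown. No ``$\chi_{F'/F}$ anticyclotomic'' condition is needed: $F'$ is totally real and $K$ totally imaginary, so $K'=KF'$ is automatically quadratic over $K$ and the Selmer decomposition is just Shapiro's lemma. The paper simply takes $K$ split at $\mathfrak{p}$ and at every prime dividing $\mathfrak{n}_f$ (in the odd-degree case, split at primes dividing $\mathfrak{n}_f/\mathfrak{q}$ and inert at $\mathfrak{q}$), so that $\mathfrak{n}^-$ is $\mathcal{O}_F$ or $\mathfrak{q}$; then $(\mathrm{CR}^+3)$ and $(\mathfrak{n}^+\text{-}\mathrm{DT})$ for $f$ and $f'$ reduce to the hypotheses already listed in the theorem. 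No Chebotarev bookkeeping is required.
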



Fix a place $\mathfrak{p}$ of $F$ above $p$. We choose an imaginary
quadratic extension $K$ of $F$ that is split at $\mathfrak{p}$ and
each place dividing $\mathfrak{n}$ in the case when $[F:\mathbb{Q}]$
is even, and that is split at $\mathfrak{p}$ and at each place
dividing $\frac{\mathfrak{n}}{\mathfrak{q}}$ and is inertia at
$\mathfrak{q}$ when $[F:\mathbb{Q}]$ is odd. Then
$\mathfrak{n}^-=\mathcal{O}_F$ in the case when $[F:\mathbb{Q}]$ is
even, and $\mathfrak{n}^-=\mathfrak{q}$ in the case when
$[F:\mathbb{Q}]$ is odd. Thus $(\mathrm{CR}^+3)$ and
($\mathfrak{n}^{+}$-DT) hold for $f$ and $f'$.

Denote $KF'$ by $K'$.

By \cite[Theorem 9.1 and Lemma 9.4]{Wang}
under (CR$^+$) and ($\mathfrak{n}^{+}$-DT), Gross period and the
canonical period only differ a $p$-adic unit\footnote{Though this conclusion is not stated in \cite{Wang}, repeating the argument in  \cite[Section 6]{CH15}, one can deduce it from \cite[Theorem 9.1 and Lemma 9.4]{Wang}.}. So the two kinds of
$p$-adic $L$-functions in \cite{Wan} and \cite{Xie} coincide.

\begin{thm}\label{cor:lv} We have $$
(L_{p}(K'K_{\infty},f'))=(L_{p}(K_{\infty},f))(L_{p}(K_{\infty},f\otimes\chi_{F'/F}))
$$ in $\mathcal{O}_{\mathfrak{P}}[[\Gamma_{\mathfrak{p}}^{-}]]. $
\end{thm}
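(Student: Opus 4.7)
The plan is to follow the strategy already sketched in the introduction, combining the Iwasawa main conjecture of Section \ref{sec:d} with the Selmer group decomposition under quadratic base change and a $\mu$-invariant argument. First I would apply Theorem \ref{thm:Iw-main}(a) both to $f$ and to its quadratic twist $f\otimes\chi_{F'/F}$ over $F$ (using that both satisfy $(\mathrm{CR}^+)$, $(\mathfrak{n}^+\text{-}\mathrm{DT})$, $(\mathrm{PO})$ and $(\mathrm{Fuji}1$-$4)$ in the setup here) to obtain the integral equalities
\begin{equation*}
(L_p(K_\infty,f))=\mathrm{char}\ \mathrm{Sel}(K_\infty,A_\rho)^\vee,\quad (L_p(K_\infty,f\otimes\chi_{F'/F}))=\mathrm{char}\ \mathrm{Sel}(K_\infty,A_{\rho\otimes\chi_{F'/F}})^\vee
\end{equation*}
in $\mathcal{O}_\mathfrak{P}[[\Gamma^-_\mathfrak{p}]]$. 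Combining these with the standard Selmer decomposition
\begin{equation*}
\mathrm{Sel}(K'K_\infty,A_\rho)=\mathrm{Sel}(K_\infty,A_\rho)\oplus \mathrm{Sel}(K_\infty,A_{\rho\otimes\chi_{F'/F}}),
\end{equation*}
induced from $\mathrm{Ind}^{G_F}_{G_{F'}}(A_\rho|_{G_{F'}})\cong A_\rho\oplus A_{\rho\otimes\chi_{F'/F}}$, yields
\begin{equation*}
(L_p(K_\infty,f)L_p(K_\infty,f\otimes\chi_{F'/F}))=\mathrm{char}\ \mathrm{Sel}(K'K_\infty,A_\rho)^\vee.
\end{equation*}

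Next I would apply Theorem \ref{thm:Iw-main}(a) to $f'$ over $F'$, using the anticyclotomic $\mathbb{Z}_p$-extension $K'_\infty$ of $K'$ attached to a chosen place $\mathfrak{p}'$ of $F'$ above $\mathfrak{p}$. This gives $(L_p(K'_\infty,f'))=\mathrm{char}\ \mathrm{Sel}(K'_\infty,A_\rho)^\vee$. Since $K'K_\infty$ is a closed subextension of $K'_\infty$, Proposition \ref{prop:S-U} (the projection inclusion for characteristic ideals) provides
\begin{equation*}
(L_p(K'K_\infty,f'))\supseteq \mathrm{char}\ \mathrm{Sel}(K'K_\infty,A_\rho)^\vee.
\end{equation*}
Combining with the preceding identity one obtains the integral inclusion
\begin{equation}\label{eq:prop-incl}
(L_p(K'K_\infty,f'))\supseteq (L_p(K_\infty,f)L_p(K_\infty,f\otimes\chi_{F'/F})).
\end{equation}

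It remains to promote \eqref{eq:prop-incl} to an equality. For this I would establish the rational equality in $\mathcal{O}_\mathfrak{P}[[\Gamma^-_\mathfrak{p}]]_E$. Since $[F':\mathbb{Q}]$ is even, Theorem \ref{thm:Iw-main}(b) applies to $f'$ over $F'$, giving $\mathrm{char}\ \mathrm{Sel}(K'K_\infty,A_\rho)^\vee=(L_p(K'K_\infty,f'))$ after inverting $p$; comparing with the integral identity for the product $L_p(K_\infty,f)L_p(K_\infty,f\otimes\chi_{F'/F})$ yields the rational equality. Alternatively the same rational equality can be read off directly from the interpolation formulas in \cite{Xie21} together with the factorization $L(f',\chi,s)=L(f,\chi_0,s)L(f\otimes\chi_{F'/F},\chi_0,s)$ valid when $\chi=\chi_0\circ N_{K'/K}$. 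Finally, writing the quotient as an element $c\in\mathcal{O}_\mathfrak{P}[[\Gamma^-_\mathfrak{p}]]$ that is a unit in the ring with $\varpi$ inverted, the vanishing of the analytic $\mu$-invariants of both $L_p(K_\infty,f)$ and $L_p(K_\infty,f\otimes\chi_{F'/F})$ (which follows from \cite[Theorem 6.9]{Hung} invoked in the proof of Theorem \ref{thm:Iw-main}) forces $v_\varpi(c)=0$, hence $c$ is a unit in $\mathcal{O}_\mathfrak{P}[[\Gamma^-_\mathfrak{p}]]$.

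The main obstacle is the very last $\mu$-invariant step: one must know that the $\mu$-invariant of the twist $L_p(K_\infty,f\otimes\chi_{F'/F})$ also vanishes, not merely that of $L_p(K_\infty,f)$. This requires verifying that the hypotheses of Hung's theorem are preserved under the quadratic twist by $\chi_{F'/F}$; given the choice of $K$ (split at $\mathfrak{p}$ and at all primes dividing $\mathfrak{n}/\mathfrak{q}$) this is a straightforward but technical check. A secondary subtle point is to ensure that the Selmer group decomposition respects the precise local conditions of Section \ref{ss:selmer} (ordinary at $p\mathfrak{n}^-$, finite elsewhere), which should follow from the compatibility of ordinary filtrations with restriction to $G_{F'}$.
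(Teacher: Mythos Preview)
Your proposal is correct and follows essentially the same route as the paper's proof: three applications of Theorem \ref{thm:Iw-main}(\ref{it:Iw-main-a}) (to $f$, $f\otimes\chi_{F'/F}$, and $f'$), the Selmer decomposition, the projection inclusion of Proposition \ref{prop:S-U} from $K'_\infty$ down to $K'K_\infty$, and the $\mu$-invariant argument via \cite[Theorem 6.9]{Hung}. The paper obtains the rational equality simply by writing $L_p(K'K_\infty,f')\doteq L_p(K_\infty,f)L_p(K_\infty,f\otimes\chi_{F'/F})$ from interpolation, so your detour through Theorem \ref{thm:Iw-main}(\ref{it:Iw-main-b}) is unnecessary (and indeed $K'K_\infty$ need not literally be of the form $K^-_J$ over $F'$); your stated alternative via the interpolation formula is exactly what the paper uses.
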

\begin{proof}
We fix a place \(\mathfrak{p}'\) of \(F'\) above \(\mathfrak{p}\),
and  write \(K_{\infty}'\) for \({{K'}^{ \
-}_{\{\mathfrak{p}'\},\infty}}\).
    We have $$ L_{p}(K'K_{\infty},f')\doteq L_{p}(K_{\infty},f)L_{p}(K_{\infty},f\otimes\chi_{F'/F}) $$
    and $$ \mathrm{Sel}(K'K_{\infty},A_{\rho})=\mathrm{Sel}(K_{\infty},A_{\rho})\bigoplus\mathrm{Sel}(K_{\infty},A_{\rho\otimes\chi_{F'/F}}).$$
    By Theorem \ref{thm:Iw-main} (\ref{it:Iw-main-a})  we have $$ (L_{p}(K_{\infty},f))=\mathrm{char}\ \mathrm{Sel}(K_{\infty},A_{\rho})^{\vee},$$
    $$  (L_{p}(K_{\infty},f\otimes\chi_{F'/F}))=\mathrm{char}\ \mathrm{Sel}(K_{\infty},A_{\rho\otimes\chi_{F'/F}})^{\vee}    $$
    and $$  (L_{p}(K_{\infty}',f'))=\mathrm{char}\ \mathrm{Sel}(K_{\infty}',A_{\rho})^{\vee}.
    $$
    Since $K'K_{\infty} $ is a subextension of $K'$ contained in $ K_{\infty}'$, by the same method as in the proof of Proposition \ref{prop:S-U},
    we obtain $$\mathrm{char}\ \mathrm{Sel}(K'K_{\infty},A_{\rho})^{\vee}\subseteq \mathrm{char}\ \mathrm{Sel}(K_{\infty}',A_{\rho})^{\vee} \ \mathrm{mod}\ \mathfrak{c} $$
    where $ \mathfrak{c}$ is the kernel of $$\mathcal{O}_\mathfrak{P}[[\mathrm{Gal}(K_{\infty}'/K')]]\rightarrow
\mathcal{O}_\mathfrak{P}[[\mathrm{Gal}(K'K_{\infty}/K')]]\cong\mathcal{O}_\mathfrak{P}[[\Gamma_{\mathfrak{p}}^{-}]].$$
Thus we have $$ \mathrm{char}\
\mathrm{Sel}(K'K_{\infty},A_{\rho})^{\vee}=(L_{p}(K_{\infty},f))\cdot
(L_{p}(K_{\infty},f\otimes\chi_{F'/F}))\subseteq
(L_{p}(K'K_{\infty},f')) $$ in
$\mathcal{O}_{\mathfrak{P}}[[\Gamma_{\mathfrak{p}}^{-}]].$ According
to \cite[Theorem 6.9]{Hung},  $ \mu(L_{p}(K_{\infty},f))=0$ and $
\mu(L_{p}(K_{\infty},f\otimes\chi_{F'/F}))=0 $. So we deduce that
$$(L_{p}(K'K_{\infty},f'))=(L_{p}(K_{\infty},f))(L_{p}(K_{\infty},f\otimes\chi_{F'/F})),$$ as desired.
\end{proof}

{\noindent \it Proof of Theorem \ref{thm:main-stronger}.} Write
$\mathfrak{n}^+\mathcal{O}_K=\mathfrak{N}^+\overline{\mathfrak{N}^+}$.
    We fix a place $\mathfrak{p}$ of $F$ above $p$ and a place $\mathfrak{p}'$ of $F'$ above $\mathfrak{p}$.
    For the $p$-adic anticyclotomic $L$-function $L_{p}(K_{\infty},f)$, one has the interpolation formula in \cite[Theorem 5.6]{Hung}.
    For each $p$-adic character $\widehat{\nu}$ of $\mathrm{Gal}(K_{\infty}/K)$ of conductor $\mathfrak{p}^{s}$, we have
    $$\widehat{\nu}(L_{p}(K_{\infty},f))=\frac{L(\frac{1}{2},\pi_{K}\otimes \nu)}
    {\Omega_{f}^{\mathrm{can}}}\cdot \nu(\mathfrak{N}^{+})\cdot (N_{F/\mathbb{Q}}\mathfrak{p})^{sk}\cdot
     e_{\mathfrak{p}}(f, \nu)^{2-\mathrm{ord}_{\mathfrak{p}}\mathfrak{n}}\cdot C $$
    where $C$ is a $p$-adic unit and $\Omega_{f}^{\mathrm{can}}$ is the canonical period of
    $f$. We refer to \cite[Section 5]{Hung} for the relation between $\nu$ and
    $\widehat{\nu}$.
Here the branch character $\chi_t$ in \cite[Theorem 5.6]{Hung} is
take to be the trivial one.

Let $\widehat{\nu}'$ denote the base change of Hecke character
$\widehat{\nu}$ over $K'$.

    Case 1.  $\mathfrak{p}$ is split in
    $F'$. Write $\mathfrak{p}\mathcal{O}_{F'}=\mathfrak{p}'\bar{\mathfrak{p}'}$. As $K$ is split at $\mathfrak{p}$, we have
    $\mathfrak{p}\mathcal{O}_K=\mathfrak{P} \overline{\mathfrak{P} }$.
    Similarly, we have $\mathfrak{p}'\mathcal{O}_{K'}=\mathfrak{P}' \overline{\mathfrak{P}'
    }$. According to the definition of
    $e_{\mathfrak{p}}(f, \nu)$ in \cite[Theorem 5.6]{Hung}, if $s=0$ we have
    $$e_{\mathfrak{p}}(f, \nu)=
    (1-\alpha_{\mathfrak{p}}^{-1} \nu(\mathfrak{P} ))(1-\alpha_{\mathfrak{p}}^{-1} \nu(\overline{\mathfrak{P} })),$$
    $$e_{\mathfrak{p}}(f\otimes\chi_{F'/F}, \nu)=
    (1-\alpha_{\mathfrak{p}}^{-1}\chi_{F'/F}(\mathfrak{p}
    )^{-1} \nu(\mathfrak{P} ))(1-\alpha_{\mathfrak{p}}^{-1}\chi_{F'/F}(\mathfrak{p})^{-1} \nu(\overline{\mathfrak{P} })),$$
    $$e_{\mathfrak{p}}(f', \nu')=
    (1-\alpha_{\mathfrak{p}'}^{-1} \nu({N_{K'/K}\mathfrak{P}'} ))(1-\alpha_{\mathfrak{p}'}^{-1} \nu(N_{K'/K}\overline{{\mathfrak{P}'} }))$$
where
$\alpha_{\mathfrak{p}}=a_{\mathfrak{p}}N\mathfrak{p}^{\frac{2-k}{2}}$
(resp.
$\alpha_{\mathfrak{p}'}=a_{\mathfrak{p}'}N{\mathfrak{p}'}^{\frac{2-k}{2}}$)
and $a_{\mathfrak{p}}$ (resp. $a_{\mathfrak{p}'}$) is a root of the
Hecke polynomial of $f$ (resp. $f'$) and is a $p$-adic unit. Since
$\mathfrak{p}$ is split in $F'$, we have
$\chi_{F'/F}(\mathfrak{p})=1$ and
$\alpha_{\mathfrak{p}}=\alpha_{\mathfrak{p}'}$. So we have
$$ e_{\mathfrak{p}}(f, \nu)=e_{\mathfrak{p}}(f\otimes\chi_{F'/F}, \nu)=e_{\mathfrak{p}}(f', \nu)
.$$ Now we select an unramified character $\widehat{\nu}$ such that
$ \nu(\mathfrak{N}^{+})$ and $e_{\mathfrak{p}}(f, \nu)$ are both
$p$-adic units. Then we have
    $$\widehat{\nu}(L_{p}(K_{\infty},f))=\frac{L(\frac{1}{2},\pi_{K}\otimes \nu)}{\Omega_{f}^{\mathrm{can}}}\cdot C_{1},$$
    $$\widehat{\nu}(L_{p}(K_{\infty},f\otimes\chi_{F'/F}))=\frac{L(\frac{1}{2},\pi_{K}\otimes \nu\otimes\chi_{F'/F})}{\Omega_{f\otimes\chi_{F'/F}}^{\mathrm{can}}}\cdot C_{2}$$
    and
    $$\widehat{\nu}'(L_{p}(K'K_{\infty},f'))=\frac{L(\frac{1}{2},\pi_{K'}\otimes \nu')}{\Omega_{f'}^{\mathrm{can}}}\cdot C_{3}$$
    where $C_{1}$, $C_{2}$, $C_{3}$ are all $p$-adic units. Using Theorem \ref{cor:lv} and the relation
    $$L(\frac{1}{2},\pi_{K'}\otimes\nu')=L(\frac{1}{2},\pi_{K}\otimes\nu)\cdot L(\frac{1}{2},\pi_{K}\otimes\nu\otimes\chi_{F'/F}),$$ we obtain that $
  \Omega_{f'}^{\text{can}}$ and $ \Omega_{f}^{\text{can}}\cdot\Omega_{f\otimes\chi_{F'/F}}^{\text{can}}$ differ a $p$-adic unit.

 Case 2. $\mathfrak{p}$ is inert in $F'$. Write $\mathfrak{p}\mathcal{O}_{F'}=\mathfrak{p}'$. We select $\widehat{\nu}$ which satisfy $s>0$
 and such that $\nu(\mathfrak{N}^{+})$ is a $p$-adic unit. When $s>0$, by definition we have $$e_{\mathfrak{p}}(f, \nu)
 =e_{\mathfrak{p}}(f\otimes\chi_{F'/F}, \nu)=e_{\mathfrak{p}}(f', \nu)=1.$$ Then we have
  $$\widehat{\nu}(L_{p}(K_{\infty},f))=\frac{L(\frac{1}{2},\pi_{K}\otimes \nu)}{\Omega_{f}^{\mathrm{can}}}\cdot(N_{F/\mathbb{Q}}\mathfrak{p})^{sk}\cdot C'_{1},$$
    $$\widehat{\nu}(L_{p}(K_{\infty},f\otimes\chi_{F'/F}))=\frac{L(\frac{1}{2},\pi_{K}
    \otimes\chi_{F'/F}\otimes \nu)}{\Omega_{f\otimes\chi_{F'/F}}^{\mathrm{can}}}\cdot(N_{F/\mathbb{Q}}\mathfrak{p})^{sk}\cdot C'_{2}$$
    and
    $$\widehat{\nu}'(L_{p}(K'K_{\infty},f'))=\frac{L(\frac{1}{2},\pi'_{K'}\otimes \nu')}{\Omega_{f'}^{\mathrm{can}}}\cdot(N_{F'/\mathbb{Q}}\mathfrak{p}')^{sk}\cdot C'_{3}$$
    where $C'_{1}$, $C'_{2}$, $C'_{3}$ are all $p$-adic units. Using the same method as in Case 1, we
    derive that $\Omega_{f'}^{\text{can}}$ and $ \Omega_{f}^{\text{can}}\cdot\Omega_{f\otimes\chi_{F'/F}}^{\text{can}}$ differ a $p$-adic unit.

According to \cite{D2}, the congruence number for $\lambda_f$ and
Petersson inner product $  \langle f, f \rangle_{\mathrm{Pet}}$
remain the same if $f$ is twisted by a character, so we have
$$\Omega_{f}^{\text{can}}=\Omega_{f\otimes\chi_{F'/F}}^{\text{can}}.$$
Thus the canonical period of $f'$ is a $p$-adic unit times the
square of the canonical period of $f$. \qed

\begin{cor}\label{thm:mu}
 We have $$ \mu(L_{p}(F'K_{\infty},f'))=0. $$
\end{cor}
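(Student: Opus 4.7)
\vskip 5pt

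The plan is to reduce the corollary directly to Theorem \ref{cor:lv} together with the vanishing results already invoked in its proof. Observe first that $K'K_{\infty}=KF'K_{\infty}=F'K_{\infty}$ since $K\subseteq K_{\infty}$, so that $L_{p}(F'K_{\infty},f')$ and $L_{p}(K'K_{\infty},f')$ refer to the same element of $\mathcal{O}_{\mathfrak{P}}[[\Gamma_{\mathfrak{p}}^{-}]]$.

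By Theorem \ref{cor:lv} we have the factorization
\[
(L_{p}(K'K_{\infty},f'))=(L_{p}(K_{\infty},f))\cdot(L_{p}(K_{\infty},f\otimes\chi_{F'/F}))
\]
in $\mathcal{O}_{\mathfrak{P}}[[\Gamma_{\mathfrak{p}}^{-}]]$. Because the $\mu$-invariant is additive on products of elements of the Iwasawa algebra $\mathcal{O}_{\mathfrak{P}}[[\Gamma_{\mathfrak{p}}^{-}]]$, it suffices to show that each of the two factors on the right has vanishing $\mu$-invariant.

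This, however, is exactly what was recorded inside the proof of Theorem \ref{cor:lv}: by \cite[Theorem 6.9]{Hung} one has $\mu(L_{p}(K_{\infty},f))=0$ and $\mu(L_{p}(K_{\infty},f\otimes\chi_{F'/F}))=0$. (Note that the hypotheses of \cite[Theorem 6.9]{Hung} are available for both $f$ and $f\otimes\chi_{F'/F}$ under the running assumptions, as already used in deducing \eqref{eq:char} from the divisibility \eqref{eq:wan-111}.) Adding these two vanishing statements gives $\mu(L_{p}(F'K_{\infty},f'))=0$, completing the proof. There is no real obstacle here; the corollary is simply the $\mu$-invariant incarnation of the identity in Theorem \ref{cor:lv}, and the only thing to be careful about is the identification $K'K_{\infty}=F'K_{\infty}$ so that the factorization can be applied.
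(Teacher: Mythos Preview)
Your proof is correct and follows essentially the same route as the paper: the paper simply says the corollary is a direct consequence of Theorem~\ref{cor:lv} and \cite[Theorem 6.9]{Hung}, and you have spelled this out, including the identification $K'K_{\infty}=F'K_{\infty}$ that reconciles the notation.
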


This is a direct consequence of Theorem \ref{cor:lv} and
\cite[Theorem 6.9]{Hung}. Note that this result cannot be derived
directly from $\mu(L_{p}(K'_{\infty},f'))=0$.

We end this section with  the following conjecture closely related
to Corollary \ref{thm:mu}.

\begin{conj}\label{conj-mu} Let $F$ be a totally real number field, $K$ a quadratic imaginary
extension of $F$. Let $F'$ be a finite totally real extension of
$F$, and $f_{F'}$ be a Hilbert modular form over $F'$ that is
ordinary at each place of $F'$ above $p$. Then
$$\mu(L_p(F'K_\infty, f_{F'}))=0.$$
\end{conj}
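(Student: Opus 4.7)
My approach is to adapt Hung's proof of $\mu=0$ for the single-place anticyclotomic $p$-adic $L$-function $L_p(K_\infty,f)$ (Theorem 6.9 of \cite{Hung}) directly to the $\BZ_p^{[F_\mathfrak{p}:\BQ_p]}$-extension $F'K_\infty/F'K$, working with the theta-element construction of Section \ref{sec:b} applied to $f_{F'}$. The essential difficulty is that $F'K_\infty$ is in general a proper subtower of the full anticyclotomic $\BZ_p$-extension of the CM field $F'K$: when $\mathfrak{p}$ splits in $F'$ into $\mathfrak{p}'_1,\ldots,\mathfrak{p}'_r$, it sits diagonally inside the compositum of the single-place towers $(F'K)^-_{\mathfrak{p}'_i,\infty}$; when $\mathfrak{p}$ is inert in $F'$, it is a proper subtower of the unique $\mathfrak{p}'$-anticyclotomic tower of $F'K$. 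As the remark after Corollary \ref{thm:mu} points out, naive specialization from a full anticyclotomic $\mu$-vanishing statement fails, because projection of Iwasawa-algebra elements along a Galois quotient can strictly increase the $\mu$-invariant.

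Concretely, the plan is to first pick a definite quaternion algebra $B'/F'$ realizing $f_{F'}$ via Jacquet--Langlands (feasible under ordinarity, after a possible auxiliary-prime adjustment of the parity of $\mathrm{ord}(\mathfrak{n}_{f_{F'}}^-)$ relative to $[F':\BQ]$), then construct theta elements $\Theta_{\vec{n}}\in\mathcal{O}_\mathfrak{P}[\mathcal{G}_{\vec{n}}]$ with conductors supported at the places of $F'$ above $\mathfrak{p}$, and finally pass to the inverse limit followed by the natural quotient onto $\mathcal{O}_\mathfrak{P}[[\mathrm{Gal}(F'K_\infty/F'K)]]$, obtaining an element $\widehat{\Theta}$ with $\widehat{\Theta}^2=L_p(F'K_\infty,f_{F'})$. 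Since the ordinarity of $f_{F'}$ at each place above $\mathfrak{p}$ makes the unit-root normalizers $\alpha_{\mathfrak{p}'}$ into $p$-adic units, proving $\mu(\widehat{\Theta})=0$ reduces to showing that some coefficient $\phi'^{\dagger_J}(x_{\vec{n}}(a))$ is nonzero modulo $\omega$ for a suitable choice of $\vec{n}$ and $a$.

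The hard part will be this final non-vanishing. It is equivalent to asserting that the $\mathrm{Gal}(F'K_\infty/F'K)$-orbit of a base Gross point in the finite double-coset set $G(F')\backslash G(\mathbf{A}_{F'})/U$ is not contained in the mod-$\omega$ vanishing locus of $\phi^\infty$. For the full anticyclotomic tower this is precisely Cornut--Vatsal equidistribution (extended to the Hilbert case by Hung), but here one must prove an analogous equidistribution statement for the smaller acting subgroup $\mathrm{Gal}(F'K_\infty/F'K)$. I would attack this by using class field theory to describe the image of $\mathrm{Gal}(F'K_\infty/F'K)$ in each single-place factor $\mathrm{Gal}((F'K)^-_{\mathfrak{p}'_i,\infty}/F'K)$, verifying that this image has finite index (or is surjective) in each factor, and then combining a diagonal Ratner-theoretic equidistribution input on the product of factors with a Chebotarev density step to secure Zariski density of the orbit in the mod-$\omega$ reduction. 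An alternative worth exploring in parallel is Hsieh's approach to $\mu$-vanishing for Katz-type $p$-adic $L$-functions, which replaces equidistribution by a direct toric-integral congruence analysis; this might adapt more flexibly to a proper subtower such as $F'K_\infty$.
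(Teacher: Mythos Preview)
The statement you are attempting is Conjecture \ref{conj-mu}; the paper does \emph{not} prove it. It is posed as an open problem. The only result in this direction is Corollary \ref{thm:mu}, which covers the very special case where $F'/F$ is quadratic and $f_{F'}$ is the base change of a form $f$ over $F$. That corollary is obtained by a completely different route: one first proves the integral equality of $p$-adic $L$-functions (Theorem \ref{cor:lv}) via the full Iwasawa main conjecture machinery (Theorem \ref{thm:Iw-main}), and then reads off $\mu=0$ for $L_p(F'K_\infty,f')$ from the already-known vanishing $\mu(L_p(K_\infty,f))=\mu(L_p(K_\infty,f\otimes\chi_{F'/F}))=0$ coming from Hung's result over the base field $F$. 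No equidistribution argument over $F'$ is used or needed in that special case.

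Your proposal is precisely the approach the paper warns against. The sentence immediately following Corollary \ref{thm:mu} reads: ``It seems impossible to use the method in \cite{CH18, Hung} to prove Conjecture \ref{conj-mu}, though it works when $F'=F$.'' The obstruction is exactly the ``hard part'' you isolate. Cornut--Vatsal (and Hung's Hilbert extension) give equidistribution of the \emph{full} anticyclotomic Galois orbit of Gross points; the underlying Ratner input is for the full $p$-adic torus acting through the optimal embedding. When you pass to the proper subtower $F'K_\infty$, the acting group becomes a diagonal (in the split case) or finite-index-but-corank-dropping (in the inert case) subgroup of the relevant $\BZ_p$-torus, and the orbit closure in the adelic double-coset space is a strictly lower-dimensional object. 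There is no known Ratner-type statement guaranteeing that such a diagonal orbit meets an arbitrary mod-$\omega$ non-vanishing locus; indeed this is a genuinely open equidistribution problem, not a technicality. Your sketch ``finite index in each factor plus Chebotarev'' does not control the diagonal correlation between factors, which is where the cancellation under projection can occur. In short, your plan correctly identifies the crux but does not supply the missing ingredient, and the paper's authors regard that ingredient as currently unavailable.
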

Corollary \ref{thm:mu} says that Conjecture \ref{conj-mu} holds when
$F'$ is a quadratic real extension of $F$ and $f_{F'}$ is the base
change of a Hilbert modular form over $F$ (that satisfies some
conditions). It seems impossible to use the method in \cite{CH18,
Hung} to prove Conjecture \ref{conj-mu}, though it works when
$F'=F$.

\end{document}